\documentclass[11pt,a4paper,reqno]{amsproc}
\usepackage[british]{babel}

\usepackage{a4wide}
\usepackage{setspace}
\usepackage{amssymb}
\usepackage{amsmath}
\usepackage{amsthm}
\usepackage{enumitem}
\usepackage{mathrsfs}
\usepackage[colorlinks,citecolor=blue,urlcolor=black,linkcolor=red,linktocpage]{hyperref}
\usepackage{mathrsfs}
\usepackage{mathtools}


\newtheorem{thm}{Theorem}[section]
\newtheorem{cor}[thm]{Corollary}
\newtheorem{lem}[thm]{Lemma}

\newtheorem{prop}[thm]{Proposition}

\theoremstyle{definition}
\newtheorem{problem}[thm]{Problem}

\newtheorem{definition}[thm]{Definition}

\newtheorem{remark}[thm]{Remark}


\renewcommand{\epsilon}{\varepsilon}
\renewcommand{\phi}{\varphi}
\newcommand{\defeq}{\mathrel{\mathop:}=}                         
\newcommand{\eqdef}{\mathrel{\mathopen={\mathclose:}}}
\renewcommand{\Re}{\operatorname{Re}}
\renewcommand{\Im}{\operatorname{Im}}          

\DeclareMathOperator{\spt}{spt}

\DeclareMathOperator{\convc}{\overline{conv}}

\hyphenation{se-mi-group}

\begin{document}

\onehalfspace

\setlist{noitemsep}

\author{Friedrich Martin Schneider}
\address{F.M.S., Institute of Algebra, TU Dresden, 01062 Dresden, Germany }
\email{martin.schneider@tu-dresden.de}

\author{Andreas Thom}
\address{A.T., Institute of Geometry, TU Dresden, 01062 Dresden, Germany }
\email{andreas.thom@tu-dresden.de}

\title[Random walks on topological groups]{The Liouville property and random walks on\\ topological groups}
\date{\today}

\begin{abstract} 
  We study harmonic functions and Poisson boundaries for Borel probability measures on general (i.e., not necessarily locally compact) topological groups, and we prove that a second-countable topological group is amenable if and only if it admits a fully supported, regular Borel probability measure with trivial Poisson boundary. This generalizes work of Kaimanovich--Vershik and Rosenblatt, confirms a general topological version of Furstenberg's conjecture, and entails a characterization of the amenability of isometry groups in terms of the Liouville property for induced actions. Moreover, our result has non-trivial consequences concerning Liouville actions of discrete groups on countable sets.
\end{abstract}

\maketitle


\tableofcontents

\section{Introduction}

The study of random walks on countable discrete (resp., second-countable locally \mbox{compact}) groups via their corresponding \emph{Poisson boundary} was initiated in a series of papers by Furstenberg~\cite{furstenberg63a,furstenberg63b,furstenberg71,furstenberg73} and has since evolved into a major theme in geometric group theory. Furstenberg's original construction of the Poisson boundary for random walks on discrete groups, relying upon the martingale convergence theorem, was soon generalized to locally compact groups~\cite{azencott} and complemented by a number of non-probabilistic approaches, ranging from functional-analytic to \mbox{dynamical}~\cite{zimmer,paterson,dokken,DokkenEllis,willis}. As revealed by the multitude of rather different perspectives on Poisson boundaries, there is no essential obstruction to defining and studying such boundaries for general (i.e., not necessarily locally compact) topological groups~\cite{prunaru}. On the other hand, recent years' growing interest in topological dynamics and ergodic theory of \emph{large} (often referred to as \emph{infinite-dimensional}) Polish groups, such as arising, for instance, in Ramsey theory and operator algebra, suggests to actually pursue this path. The present article aims to initiate and advance this development.

A definite milestone in the classical theory of Poisson boundaries of random walks on groups is marked by Kaimanovich--Vershik's proof~\cite{VershikKaimanovich,KaimanovichVershik} of Furstenberg's conjecture~\cite{furstenberg73}: \emph{a countable group G is amenable if and only if it possesses a probability measure whose support is all of G and whose Poisson boundary is trivial}. It was Rosenblatt~\cite{rosenblatt} who established Furstenberg's conjecture for random walks on $\sigma$-compact locally compact groups. The central objective of the present paper is a general topological version of Furstenberg's conjecture (Corollary~\ref{corollary:main}), valid for arbitrary second-countable groups, which follows from a new characterization of amenability in terms of asymptotic invariance of the sequence of convolutional powers of a suitable fully supported, regular Borel probability measure (Theorem~\ref{theorem:main}). The proof of the latter combines ideas of Kaimanovich and Vershik~\cite{KaimanovichVershik} with some recent work by the present authors~\cite{SchneiderThom}. Furthermore, our result reveals a correspondence between the amenability of topological groups of isometries of metric spaces and the Liouville property for their induced actions (Theorem~\ref{theorem:juschenko}).

Perhaps surprisingly, the topological version of Furstenberg's conjecture (Corollary~\ref{corollary:main}) does have interesting and non-trivial consequences even in the context of actions of countable (discrete) groups on countable sets. Answering a recent question by Juschenko~\cite{juschenko}, we show that, for every $n \in \mathbb{N}$, the action of Thompson's group $F$ on the set $\mathbb{Z}\!\left[ \tfrac{1}{2} \right]$ of dyadic rationals is \emph{$n$-Liouville}~\cite{juschenko}, that is, the induced action of $F$ on the collection of $n$-element subsets of $\mathbb{Z}\!\left[ \tfrac{1}{2} \right]$ admits a Liouville measure. In the light of our Corollary~\ref{corollary:juschenko.2}, this becomes an immediate consequence of Pestov's work~\cite{pestov} establishing the (extreme) amenability of the topological group $\mathrm{Aut}\!\left(\mathbb{Z}\!\left[ \tfrac{1}{2} \right],{\leq}\right)$, which contains $F$ as a dense subgroup.

This article is organized as follows. In Section~\ref{section:ueb} we recall some necessary background concerning the amenability of general topological groups, including its connection with the so-called UEB topology. The subsequent Section~\ref{section:convolution} is dedicated to providing some technical prerequisites on convolution semigroups over topological groups. In Section~\ref{section:poisson} we study harmonic functions and Poisson boundaries for Borel probability measures on topological groups and prove the above-mentioned generalization of Furstenberg's conjecture, the consequences of which for isometry groups form the subject of Section~\ref{section:liouville}. The final Section~\ref{section:thompson} contains applications of our results in the context of discrete group actions and Thompson's group $F$.

\section{UEB topology and amenability}\label{section:ueb}

The purpose of this preliminary section is to fix some notation and to recall some basic concepts concerning function spaces on topological groups. The focus is on providing some necessary background regarding the UEB topology on the corresponding dual vector spaces, including its connection with the amenability of topological groups. For a considerably more comprehensive exposition (capturing the general setting of arbitrary uniform spaces), the reader is referred to Pachl's monograph~\cite{PachlBook}.

Before getting to topological groups, let us briefly clarify some general notation. Given a set $S$, we denote by $\ell^{\infty}(S)$ the commutative unital $C^{\ast}$-algebra of all bounded complex-valued functions on $S$, equipped with the pointwise operations and the supremum norm \begin{displaymath}
	\Vert f \Vert_{\infty} \, \defeq \, \sup \{ \vert f(s) \vert \mid s \in S \} \qquad \left( f \in \ell^{\infty}(S) \right) .
\end{displaymath} Consider any Hausdorff topological space $T$ and let $\mu$ be a Borel probability measure on $T$. The \emph{support} of $\mu$ is defined to be the closed subset of $T$ given by \begin{displaymath}
	\spt (\mu) \, \defeq \, \{ x \in T \mid \forall U \subseteq T \text{ open}\colon \, x \in U \Longrightarrow \mu (U) > 0 \} .
\end{displaymath} Let us call $\mu$ \emph{finitely supported} if $\spt (\mu)$ is finite, and \emph{fully supported} if $\spt (\mu) = T$. As usual, $\mu$ will be called \emph{regular} if $\mu (B) = \sup \{ \mu(K) \mid K\subseteq B, \, K \text{ compact} \}$ for every Borel set $B \subseteq T$. The set of all Borel probability measures on $T$ will be denoted by $\mathrm{Prob}(T)$ and the subset of regular Borel probability measures on $T$ will be denoted by $\mathrm{Prob}_{\mathrm{reg}}(T)$.

Throughout the present article, a \emph{topological group} is always understood to be Hausdorff. Let $G$ be a topological group. A function $f \colon G \to \mathbb{C}$ is said to be \emph{right-uniformly continuous} if for every $\epsilon > 0$ there exists a neighborhood $U$ of the neutral element in $G$ such that \begin{displaymath}
	\forall x,y \in G \colon \qquad xy^{-1} \in U \ \Longrightarrow \ \vert f(x) - f(y) \vert \, \leq \, \epsilon .
\end{displaymath} The set $\mathrm{RUCB}(G)$ of all right-uniformly continuous, bounded complex-valued functions on~$G$, equipped with the pointwise operations and the supremum norm, constitutes a commutative unital $C^{\ast}$-algebra. A set $H \subseteq \mathrm{RUCB}(G)$ is called \emph{UEB} (short for \emph{uniformly equicontinuous, bounded}) if $H$ is $\Vert \cdot \Vert_{\infty}$-bounded and \emph{right-uniformly equicontinuous}, that is, for every $\epsilon > 0$ there exists a neighborhood $U$ of the neutral element in $G$ such that \begin{displaymath}
	\forall f \in H \ \forall x,y \in G \colon \qquad xy^{-1} \in U \ \Longrightarrow \ \vert f(x) - f(y) \vert \, \leq \, \epsilon .
\end{displaymath}
The set $\mathrm{RUEB}(G)$ of all UEB subsets of $\mathrm{RUCB}(G)$ is a convex vector bornology on $\mathrm{RUCB}(G)$. The \emph{UEB topology} on the dual Banach space $\mathrm{RUCB}(G)^{\ast}$ is defined as the topology of uniform convergence on the members of $\mathrm{RUEB}(G)$. Of course, this is a locally convex linear topology on the vector space $\mathrm{RUCB}(G)^{\ast}$ containing the weak-${}^{\ast}$ topology, that is, the initial topology generated by the functions $\mathrm{RUCB}(G)^{\ast} \to \mathbb{C}, \, \mu  \mapsto \mu (f)$ where $f \in \mathrm{RUCB}(G)$. More detailed information on the structure of $\mathrm{RUCB}(G)^{\ast}$ and the UEB topology can be found in~\cite{PachlBook}. For most of this paper, we will be interested in aspects concerning the convex subset \begin{displaymath}
	\mathrm{M}(G) \, \defeq \, \{ \mu \in \mathrm{RUCB}(G)^{\ast} \mid \mu \text{ positive}, \, \mu (\mathbf{1}) = 1 \} ,
\end{displaymath} i.e., the set of \emph{states} on the $C^{\ast}$-algebra $\mathrm{RUCB}(G)$, which forms a compact Hausdorff space with respect to the weak-${}^{\ast}$ topology. The set $\mathrm{S}(G)$ of all ${}^{\ast}$-homomorphisms from $\mathrm{RUCB}(G)$ to $\mathbb{C}$ forms a closed subspace of $\mathrm{M}(G)$, which is called the \emph{Samuel compactification} of $G$. Now, for any $g \in G$, let $\lambda_{g} \colon G \to G, \, x \mapsto gx$ and $\rho_{g} \colon G \to G, \, x \mapsto xg$. Note that the group $G$ acts by linear isometries on the Banach space $\mathrm{RUCB}(G)^{\ast}$ via \begin{displaymath}
	(g \mu)(f) \, \defeq \, \mu (f \circ \lambda_{g}) \qquad \left( g \in G, \, \mu \in \mathrm{RUCB}(G)^{\ast}, \, f \in \mathrm{RUCB}(G) \right) .
\end{displaymath} It is straightforward to check that $\mathrm{M}(G)$ forms a $G$-invariant subset of $\mathrm{RUCB}(G)^{\ast}$ and that the restricted action of $G$ on $\mathrm{M}(G)$ is affine and continuous with respect to the weak-${}^{\ast}$ topology. Moreover, $\mathrm{S}(G)$ constitutes a $G$-invariant subspace of $\mathrm{M}(G)$. We recall that $G$ is said to be \emph{amenable} (resp., \emph{extremely amenable}) if $\mathrm{M}(G)$ (resp., $\mathrm{S}(G)$) admits a $G$-fixed point. It is well known that $G$ is amenable (resp., extremely amenable) if and only if every continuous action of $G$ on a non-void compact Hausdorff space admits a $G$-invariant regular Borel probability measure (resp., a $G$-fixed point). For a more comprehensive account on (extreme) amenability of general topological groups, we refer to~\cite{PestovBook}.

We recall a recent characterization of amenability in terms of asymptotically invariant finitely supported probability measures.

\begin{thm}[\cite{SchneiderThom}, Theorem~3.2]\label{theorem:topological.day} A topological group $G$ is amenable if and only if, for every $\epsilon > 0$, every $H \in \mathrm{RUEB}(G)$ and every finite subset $E \subseteq G$, there exists a finitely supported, regular Borel probability measure $\mu$ on $G$ such that \begin{displaymath}
	\forall g \in E \ \forall f \in H \colon \quad \left\lvert \int f \circ \lambda_{g} \, d\mu - \int f \, d\mu \right\rvert \, \leq \, \epsilon .
\end{displaymath} \end{thm}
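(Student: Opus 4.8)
The plan is to recognise the displayed inequality as a statement in the UEB topology and then to mimic Day's classical argument for discrete groups, replacing Mazur's lemma by a duality property of the UEB topology. First observe that a finitely supported Borel probability measure $\mu$ on $G$ is automatically regular, and that under the identification $\int f\,d\mu=\mu(f)$ such measures are precisely the finite convex combinations of the point evaluations $\delta_{x}\in\mathrm{S}(G)$; they thus form a convex subset $C\subseteq\mathrm{M}(G)$. Since $(g\mu)(f)=\mu(f\circ\lambda_{g})$, the inequality for a fixed $g\in E$ reads $\sup_{f\in H}\lvert(g\mu-\mu)(f)\rvert\le\epsilon$, i.e. it asserts that $g\mu-\mu$ lies in the basic UEB-neighbourhood of $0$ determined by $H$ and $\epsilon$. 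Hence the condition in the theorem is equivalent to the existence of a net $(\mu_{i})$ in $C$ with $g\mu_{i}-\mu_{i}\to0$ in the UEB topology for every $g\in G$, and the theorem becomes the assertion that such an asymptotically invariant net of finitely supported measures exists exactly when $G$ is amenable.

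For the easy implication, suppose such measures exist. I would direct the set of triples $(\epsilon,H,E)$ by letting $\epsilon\to0$ and ordering $H$ and $E$ by inclusion; this is genuinely directed because $\mathrm{RUEB}(G)$ is a bornology, hence stable under finite unions. Choosing a witnessing $\mu$ for each triple yields a net in the weak-${}^{\ast}$ compact space $\mathrm{M}(G)$, from which I extract a convergent subnet with limit $m\in\mathrm{M}(G)$. To see that $m$ is $G$-fixed, fix $g\in G$ and $f\in\mathrm{RUCB}(G)$ and note that the singleton $\{f\}$ belongs to $\mathrm{RUEB}(G)$; along the subnet the quantity $\lvert\mu(f\circ\lambda_{g})-\mu(f)\rvert$ is eventually at most $\epsilon$, so passing to the limit gives $(gm)(f)=m(f)$. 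Thus $m$ is a $G$-fixed point of $\mathrm{M}(G)$ and $G$ is amenable.

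For the converse, assume $G$ is amenable and fix $(\epsilon,H,E)$; let $m\in\mathrm{M}(G)$ be a $G$-fixed point. The first ingredient is that $C$ is weak-${}^{\ast}$ dense in $\mathrm{M}(G)$: the evaluations $\delta_{x}$ ($x\in G$) are dense in the Samuel compactification $\mathrm{S}(G)$ --- otherwise some nonzero element of $\mathrm{RUCB}(G)\cong C(\mathrm{S}(G))$ would vanish on all of $G$ --- and $\mathrm{M}(G)$ is the weak-${}^{\ast}$-closed convex hull of $\mathrm{S}(G)$. Following Day, I consider the affine, weak-${}^{\ast}$-continuous map
\begin{displaymath}
	\Phi\colon\mathrm{M}(G)\longrightarrow\textstyle\prod_{g\in E}\mathrm{RUCB}(G)^{\ast},\qquad\mu\longmapsto(g\mu-\mu)_{g\in E}.
\end{displaymath}
Since $\Phi$ is affine and weak-${}^{\ast}$ continuous and $C$ is weak-${}^{\ast}$ dense, the point $\Phi(m)=0$ lies in the weak-${}^{\ast}$ closure of the convex set $\Phi(C)$.

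The hard part is to upgrade this weak-${}^{\ast}$ approximation of $0$ to a UEB approximation, which is exactly what the theorem demands. Here I note that, for $\mu\in C$ and $g\in G$, one has $g\delta_{x}=\delta_{gx}$, so $g\mu-\mu$ is again finitely supported; thus $\Phi(C)$ lives in the space of molecular (finitely supported) measures. On that space Pachl's duality theory identifies the UEB topology as a topology of the dual pairing against $\mathrm{RUCB}(G)$ --- its continuous linear functionals are precisely the weak-${}^{\ast}$ continuous ones --- so that, by Hahn--Banach, a convex set has the same UEB- and weak-${}^{\ast}$-closure. Consequently $0$ lies in the UEB closure of $\Phi(C)$ as well, which produces $\mu\in C$ with $\sup_{f\in H}\lvert(g\mu-\mu)(f)\rvert\le\epsilon$ for all $g\in E$, the desired measure. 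This coincidence of UEB- and weak-${}^{\ast}$-closures on convex sets of finitely supported measures is the one genuinely infinite-dimensional ingredient and the main obstacle; it is what replaces Mazur's lemma (weak closure $=$ norm closure) in the classical discrete version of Day's theorem, to which the whole argument specialises when $G$ is discrete and the UEB topology becomes the $\ell^{1}$-norm.
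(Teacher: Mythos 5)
Your argument is correct: the paper itself gives no proof of this statement but imports it from \cite{SchneiderThom}, and your reconstruction follows essentially the same route as the proof of Theorem~3.2 there --- the compactness/accumulation-point argument for the easy direction, and for the converse the Day-style map $\mu \mapsto (g\mu-\mu)_{g\in E}$ on the weak-${}^{\ast}$-dense convex set of finitely supported measures, upgraded from weak-${}^{\ast}$ to UEB approximation via Pachl's theorem that the UEB topology on molecular measures is compatible with the duality against $\mathrm{RUCB}(G)$. You correctly isolate that duality theorem as the one genuinely non-trivial ingredient replacing Mazur's lemma.
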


\begin{cor}\label{corollary:topological.day} A topological group $G$ is amenable if and only if, for every $\epsilon > 0$, every $H \in \mathrm{RUEB}(G)$ and every finite subset $E \subseteq G$, there exists a finitely supported, regular Borel probability measure $\mu$ on $G$ such that $E \subseteq \spt (\mu)$ and \begin{displaymath}
	\forall g \in E \ \forall f \in H \colon \quad \left\lvert \int f \circ \lambda_{g} \, d\mu - \int f \, d\mu \right\rvert \, \leq \, \epsilon .
\end{displaymath} \end{cor}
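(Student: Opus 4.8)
The plan is to deduce the Corollary from Theorem~\ref{theorem:topological.day} by a simple convex-combination (perturbation) argument, the point being that the two statements differ only by the extra requirement $E \subseteq \spt(\mu)$. The backward implication is immediate: the condition in the Corollary is formally stronger than the one in Theorem~\ref{theorem:topological.day}, so if it holds for all $\epsilon$, $H$ and $E$, then so does the condition of Theorem~\ref{theorem:topological.day}, whence $G$ is amenable. All the work therefore goes into the forward implication.

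For the forward implication I would assume $G$ amenable and fix $\epsilon > 0$, $H \in \mathrm{RUEB}(G)$ and a finite subset $E \subseteq G$. Put $M \defeq \sup \{ \Vert f \Vert_{\infty} \mid f \in H \} < \infty$ (finite since $H$ is $\Vert \cdot \Vert_{\infty}$-bounded) and let $\nu \defeq \tfrac{1}{\lvert E \rvert} \sum_{g \in E} \delta_{g}$, a finitely supported, regular Borel probability measure with $\spt(\nu) = E$. The idea is to start from a measure furnished by Theorem~\ref{theorem:topological.day} and perturb it by a small amount of $\nu$, so as to force $E$ into the support while keeping the asymptotic-invariance estimate under control. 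Concretely, I would choose $t \in (0,1)$ small enough that $2tM \leq \epsilon/2$, and apply Theorem~\ref{theorem:topological.day} with the same $H$ and $E$ but with tolerance $\epsilon/2$, obtaining a finitely supported, regular Borel probability measure $\mu_{0}$ with $\lvert \int f \circ \lambda_{g} \, d\mu_{0} - \int f \, d\mu_{0} \rvert \leq \epsilon/2$ for all $g \in E$ and $f \in H$. Then I set $\mu \defeq (1-t)\mu_{0} + t\nu$.

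It remains to check that $\mu$ works. It is again a finitely supported, regular Borel probability measure, and since $t \in (0,1)$ and both summands are finitely supported, $\spt(\mu) = \spt(\mu_{0}) \cup \spt(\nu) \supseteq E$. For $g \in E$ and $f \in H$, linearity of the integral and the triangle inequality give
\[
\left\lvert \int f \circ \lambda_{g} \, d\mu - \int f \, d\mu \right\rvert \leq (1-t)\left\lvert \int f \circ \lambda_{g} \, d\mu_{0} - \int f \, d\mu_{0} \right\rvert + t\left\lvert \int f \circ \lambda_{g} \, d\nu - \int f \, d\nu \right\rvert \leq (1-t)\tfrac{\epsilon}{2} + t \cdot 2M \leq \epsilon .
\]

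I do not expect a genuine obstacle here; the only point requiring a moment's care is the elementary bound $\lvert \int f \circ \lambda_{g} \, d\nu - \int f \, d\nu \rvert \leq 2M$, which holds because both integrals are averages of values of $f$ and $\lvert f \rvert \leq M$ on $\spt(\nu)$. This crude estimate is exactly what lets the (otherwise uncontrolled) invariance defect of the auxiliary measure $\nu$ be absorbed into an arbitrarily small multiplicative factor $t$, so that the corollary is simply a soft perturbation of Theorem~\ref{theorem:topological.day}.
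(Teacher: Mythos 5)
Your proposal is correct and is essentially the same argument as the paper's: both perturb a measure supplied by Theorem~\ref{theorem:topological.day} (with tolerance $\epsilon/2$) by a small convex combination with the uniform measure on $E$, choosing the mixing parameter so that the crude $2\sup_{f\in H}\Vert f\Vert_\infty$ bound on the defect of the uniform measure is absorbed into the remaining $\epsilon/2$. The only cosmetic difference is that the paper fixes the explicit value $\alpha=\epsilon/(1+4\sup_{f\in H}\Vert f\Vert_\infty)$ and notes the trivial reduction to $E\neq\emptyset$.
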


\begin{proof} In view of Theorem~\ref{theorem:topological.day}, we only need to prove that the former implies the latter. To this end, let $\epsilon \in (0,1]$ and $H \in \mathrm{RUEB}(G)$, and let $E \subseteq G$ be finite. Without loss of generality, we may and will assume that $E \ne \emptyset$. By Theorem~\ref{theorem:topological.day}, since $G$ is amenable, there exists a finitely supported, regular Borel probability measure $\mu$ on $G$ with $\left\lvert \int f \circ \lambda_{g} \, d\mu - \int f \, d\mu \right\rvert \leq \tfrac{\epsilon}{2}$ for all $f \in H$ and $g \in G$. For $\alpha \defeq \tfrac{\epsilon}{1+4\sup_{f \in H} \Vert f \Vert_{\infty}} \in (0,1]$, we consider the finitely supported, regular Borel probability measure $\nu$ on $G$ given by \begin{displaymath}
	\nu (B) \, \defeq \, (1-\alpha ) \mu (B) + \alpha \tfrac{\vert B \cap E \vert}{\vert E \vert} \qquad \left(\text{$B \subseteq G$ Borel}\right) .
\end{displaymath} Evidently, $E \subseteq \spt (\nu)$. Moreover, \begin{align*}
	\left\lvert \int f \circ \lambda_{g} \, d\nu - \int f \, d\nu \right\rvert \, &\leq \, (1-\alpha) \left\lvert \int f \circ \lambda_{g} \, d\mu - \int f \, d\mu \right\rvert + \tfrac{\alpha}{\vert E \vert} \sum\nolimits_{x \in E} \left\lvert f(gx) - f(x) \right\rvert \\
	&\leq \, (1-\alpha ) \tfrac{\epsilon}{2} + \alpha 2 \Vert f \Vert_{\infty} \, \leq \, \tfrac{\epsilon}{2} + \tfrac{\epsilon}{2} \, = \, \epsilon
\end{align*} for all $f \in H$ and $g \in G$, as desired. \end{proof}

Theorem~\ref{theorem:topological.day} suggests the following Definition~\ref{definition:convergence.to.invariance}.

\begin{definition}\label{definition:convergence.to.invariance} Let $G$ be a topological group. A net $(\mu_{i})_{i \in I}$ in $\mathrm{M}(G)$ is said to \emph{UEB-converge to invariance (over $G$)} if \begin{displaymath}
	\forall g \in G \ \forall H \in \mathrm{RUEB}(G) \colon \quad \sup\nolimits_{f \in H} \lvert \mu_{i}(f \circ \lambda_{g}) - \mu_{i}(f) \rvert \, \longrightarrow \, 0 \quad (i \longrightarrow I) .
\end{displaymath} \end{definition}

\begin{remark}[see~\cite{SchneiderThom}, Proof of Theorem~3.2]\label{remark:convergence.to.invariance} Let $G$ be a topological group. If $(\mu_{i})_{i \in I}$ is a net in $\mathrm{M}(G)$ UEB-converging to invariance over $G$, then any weak-${}^{\ast}$ accumulation point of the net $(\mu_{i})_{i \in I}$ in $\mathrm{M}(G)$ is $G$-invariant. \end{remark}

For later use, we also note the subsequent observation.

\begin{lem}\label{lemma:dense.convergence.to.invariance} Let $S$ be any dense subset of a topological group $G$. A net $(\mu_{i})_{i \in I}$ in $\mathrm{M}(G)$ UEB-converges to invariance over $G$ if and only if \begin{displaymath}
		\forall g \in S \ \forall H \in \mathrm{RUEB}(G) \colon \quad \sup\nolimits_{f \in H} \lvert \mu_{i}(f \circ \lambda_{g}) - \mu_{i}(f) \rvert \, \longrightarrow \, 0 \quad (i \longrightarrow I) .
\end{displaymath} \end{lem}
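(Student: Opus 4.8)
The plan is to dispatch the forward implication immediately and concentrate all the work on the converse. Indeed, if $(\mu_{i})_{i \in I}$ UEB-converges to invariance over $G$, then the defining condition holds for every $g \in G$, hence a fortiori for every $g \in S$. For the converse, I would fix $g \in G$, $H \in \mathrm{RUEB}(G)$ and $\epsilon > 0$, and aim to show that $\sup_{f \in H} \lvert \mu_{i}(f \circ \lambda_{g}) - \mu_{i}(f) \rvert$ is eventually at most $2\epsilon$ (whence it tends to $0$, as $\epsilon$ is arbitrary). The underlying idea is to approximate the left translate $\lambda_{g}$ by a left translate $\lambda_{s}$ with $s \in S$, using the right-uniform equicontinuity of $H$ to control the resulting error \emph{uniformly} in $f \in H$, and then to invoke the hypothesis for $s$.

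First I would exploit that $H$ is UEB: there is an open neighborhood $U$ of the neutral element such that $\lvert f(x) - f(y) \rvert \leq \epsilon$ for all $f \in H$ whenever $xy^{-1} \in U$. Since $U^{-1}g$ is then a nonempty open neighborhood of $g$ and $S$ is dense, I can choose $s \in S \cap U^{-1}g$, so that $gs^{-1} \in U$. The decisive computation is $(gx)(sx)^{-1} = gs^{-1} \in U$ for \emph{every} $x \in G$, which by the equicontinuity of $H$ yields
\[
\Vert f \circ \lambda_{g} - f \circ \lambda_{s} \Vert_{\infty} \, = \, \sup\nolimits_{x \in G} \lvert f(gx) - f(sx) \rvert \, \leq \, \epsilon ,
\]
and, crucially, this bound is uniform over all $f \in H$ with a single choice of $s$.

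With $s$ fixed, I would split by the triangle inequality:
\[
\lvert \mu_{i}(f \circ \lambda_{g}) - \mu_{i}(f) \rvert \, \leq \, \lvert \mu_{i}(f \circ \lambda_{g} - f \circ \lambda_{s}) \rvert + \lvert \mu_{i}(f \circ \lambda_{s}) - \mu_{i}(f) \rvert .
\]
Because each $\mu_{i}$ is a state, and hence of norm one, the first summand is bounded by $\Vert f \circ \lambda_{g} - f \circ \lambda_{s} \Vert_{\infty} \leq \epsilon$, once again uniformly in $f \in H$. Taking the supremum over $f \in H$ and passing to the limit in $i$, the second summand tends to $0$ by the hypothesis applied to $s \in S$ and $H$, so that $\limsup_{i} \sup_{f \in H} \lvert \mu_{i}(f \circ \lambda_{g}) - \mu_{i}(f) \rvert \leq \epsilon$, which completes the proof.

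The only genuine subtlety, and the step I expect to demand the most care, is the uniform-in-$f$ control of the translation error: it is precisely the right-uniform \emph{equi}continuity encoded in the notion of a UEB set, rather than the mere individual right-uniform continuity of each $f$, that allows one and the same $s \in S$ to serve all $f \in H$ simultaneously. Once this is in place, the remaining ingredients are entirely routine, namely the norm-one property of states and the density of $S$ in $G$.
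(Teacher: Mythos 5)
Your proof is correct and follows essentially the same route as the paper's: choose $s\in S$ in a suitable one-sided neighbourhood of $g$, use the right-uniform equicontinuity of $H$ to bound $\Vert f\circ\lambda_{g}-f\circ\lambda_{s}\Vert_{\infty}$ uniformly in $f\in H$, and conclude via the triangle inequality and the hypothesis applied to $s$. The only cosmetic difference is that the paper picks $s\in Ug$ and rewrites the error as $\Vert f-(f\circ\lambda_{sg^{-1}})\Vert_{\infty}$, whereas you pick $s\in U^{-1}g$ and compute $(gx)(sx)^{-1}=gs^{-1}$ directly; both are the same estimate.
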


\begin{proof} Evidently, the former implies the latter. In order to prove the converse implication, let $(\mu_{i})_{i \in I}$ be a net in $\mathrm{M}(G)$ such that \begin{displaymath}
	\forall g \in S \ \forall H \in \mathrm{RUEB}(G) \colon \quad \sup\nolimits_{f \in H} \lvert \mu_{i}(f \circ \lambda_{g}) - \mu_{i}(f) \rvert \, \longrightarrow \, 0 \quad (i \longrightarrow I) .
\end{displaymath} Consider any $g \in G$ and $H \in \mathrm{RUEB}(G)$. We wish to show that $\sup\nolimits_{f \in H} \lvert \mu_{i}(f \circ \lambda_{g}) - \mu_{i}(f) \rvert \longrightarrow 0$ as $i \to I$. To this end, let $\epsilon > 0$. Since $H \in \mathrm{RUEB}(G)$, there exists a neighborhood $U$ of the neutral element in $G$ such that $\Vert f - (f \circ \lambda_{u}) \Vert_{\infty} \leq \tfrac{\epsilon}{2}$ for all $f \in H$ and $u \in U$. As $S$ is dense in $G$, there exists $s \in S$ with $s \in Ug$. Moreover, by assumption, we find $i _{0} \in I$ such that \begin{displaymath}
	\forall i \in I , \, i \geq i_{0} \colon \quad \sup\nolimits_{f \in H} \lvert \mu_{i}(f \circ \lambda_{s}) - \mu_{i}(f) \rvert \, \leq \, \tfrac{\epsilon}{2} . 
\end{displaymath} For every $i \in I$ with $i \geq i_{0}$, we conclude that \begin{align*}
	\lvert \mu_{i}(f \circ \lambda_{g}) - \mu_{i}(f) \rvert \, &\leq \, \lvert \mu_{i}(f \circ \lambda_{g}) - \mu_{i}(f \circ \lambda_{s}) \rvert + \lvert \mu_{i}(f \circ \lambda_{s}) - \mu_{i}(f) \rvert \\
	&\leq \, \lVert (f \circ \lambda_{g}) - (f \circ \lambda_{s}) \rVert_{\infty} + \tfrac{\epsilon}{2} \, = \, \left\lVert f - \left(f \circ \lambda_{sg^{-1}}\right) \right\rVert_{\infty} + \tfrac{\epsilon}{2} \, \leq \, \epsilon 
\end{align*} for all $f \in H$, i.e., $\sup_{f \in H} \lvert \mu_{i}(f \circ \lambda_{g}) - \mu_{i}(f) \rvert \leq \epsilon$ as desired. \end{proof}

Due to well-known work of Birkhoff~\cite{birkhoff} and Kakutani~\cite{Kakutani36}, a topological group $G$ is first-countable if and only if $G$ is metrizable, in which case $G$ moreover admits a metric $d$ both generating the topology of $G$ and being \emph{right-invariant}, i.e., satisfying $d(xg,yg) = d(x,y)$ for all $g,x,y \in G$. Furthermore, if $G$ is any metrizable topological group, then the Birkhoff--Kakutani theorem gives rise to a corresponding description of the UEB topology on norm-bounded subsets of $\mathrm{RUCB}(G)^{\ast}$ (Lemma~\ref{lemma:UEB.metric} below). To be precise, let us clarify some notation. Let $X$ be a metric space. Given any $\ell \in \mathbb{R}_{\geq 0}$, let us consider the set \begin{displaymath}
	\left. \mathrm{Lip}_{\ell} (X) \, \defeq \, \left\{ f \in \mathbb{R}^{X} \, \right\vert \forall x,y \in X \colon \, \vert f(x) - f(y) \vert \leq \ell d_{X}(x,y) \right\}
\end{displaymath} of all \emph{$\ell$-Lipschitz} real-valued functions on $X$, and moreover let $\mathrm{Lip}_{\ell}^{\infty}(X) \defeq \mathrm{Lip}_{\ell}(X) \cap \ell^{\infty}(X)$ and $\mathrm{Lip}_{\ell}^{r}(X) \defeq \mathrm{Lip}_{\ell}(X) \cap [-r,r]^{X}$ for $r \in \mathbb{R}_{\geq 0}$. Define $\mathrm{Lip}^{\infty}(X) \defeq \bigcup_{\ell \in  \mathbb{R}_{\geq 0}} \mathrm{Lip}_{\ell}^{\infty}(X)$.

\begin{lem}\label{lemma:UEB.metric} Let $G$ be a topological group. If $d$ is a right-invariant metric on $G$ generating the topology of $G$, then the UEB topology and the topology generated by the norm  \begin{displaymath}
	\mathrm{p}_{d} \colon \, \mathrm{RUCB}(G)^{\ast} \longrightarrow \, \mathbb{R}, \quad f \, \longmapsto \, \sup \left\{ \vert \mu (f) \vert \left\vert \, f \in \mathrm{Lip}_{1}^{1}(G,d) \right\} \right.
\end{displaymath} coincide on every $\Vert \cdot \Vert$-bounded subset of $\mathrm{RUCB}(G)^{\ast}$. \end{lem}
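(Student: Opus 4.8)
The plan is to exploit that, because $d$ is right-invariant and generates the topology of $G$, the Lipschitz balls $\mathrm{Lip}_{\ell}^{r}(G,d)$ are prototypical UEB sets and that every UEB set can be uniformly approximated by such a ball. First I would record the \emph{easy inclusion}: for $f \in \mathrm{Lip}_{\ell}(G,d)$ one has $\vert f(x)-f(y)\vert \leq \ell\, d(x,y) = \ell\, d(xy^{-1},e)$ by right-invariance, so, since the balls $\{ g : d(g,e) \leq \delta \}$ form a neighbourhood basis at the identity, each $\mathrm{Lip}_{\ell}^{r}(G,d)$ is $\Vert \cdot \Vert_{\infty}$-bounded and right-uniformly equicontinuous, i.e.\ a member of $\mathrm{RUEB}(G)$. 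In particular $\mathrm{Lip}_{1}^{1}(G,d) \in \mathrm{RUEB}(G)$, so $\mathrm{p}_{d}$ is exactly the seminorm of uniform convergence on this single UEB set; hence the $\mathrm{p}_{d}$-topology is coarser than the UEB topology on all of $\mathrm{RUCB}(G)^{\ast}$. It then remains to prove the reverse inclusion on an arbitrary $\Vert \cdot \Vert$-bounded set $B$, say with $\Vert \mu \Vert \leq M$ for all $\mu \in B$.

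The key step is a \emph{uniform Lipschitz regularisation} of a given UEB set $H$. Writing $r \defeq \sup_{f \in H} \Vert f \Vert_{\infty} < \infty$ and letting $\omega$ denote a common modulus of continuity for $H$ with respect to $d$ (which exists precisely because $H$ is right-uniformly equicontinuous and $d$ is right-invariant), I would apply to each real-valued $f$ the inf-convolution
\[ f_{\ell}(x) \, \defeq \, \inf\nolimits_{y \in G} \left( f(y) + \ell\, d(x,y) \right) . \]
Standard facts give that $f_{\ell}$ is $\ell$-Lipschitz, that $f_{\ell} \leq f$, and that $f(x) - f_{\ell}(x) = \sup_{y} \left( f(x) - f(y) - \ell\, d(x,y) \right)$. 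Splitting this supremum according to whether $d(x,y) \leq \delta$ or not, and using $\vert f \vert \leq r$ on the far part, one sees that $\Vert f - f_{\ell} \Vert_{\infty} \leq \omega(\delta)$ as soon as $\ell \geq 2r/\delta$ --- \emph{with the same $\ell$ for every $f \in H$}. Choosing $\delta$ with $\omega(\delta)$ as small as desired and $\ell \defeq 2r/\delta$ thus produces, uniformly over $H$, functions $f_{\ell} \in \mathrm{Lip}_{\ell}^{r'}(G,d)$ (with $r' \defeq r + \omega(\delta)$) approximating $f$ in supremum norm; complex-valued $f$ are treated by regularising real and imaginary parts separately.

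Finally I would combine these ingredients on $B$. For $\mu, \nu \in B$ and $f \in H$, the triangle inequality gives $\vert (\mu-\nu)(f) \vert \leq \vert \mu(f - f_{\ell}) \vert + \vert (\mu-\nu)(f_{\ell}) \vert + \vert \nu(f - f_{\ell}) \vert$. The outer two terms are bounded by $M \Vert f - f_{\ell} \Vert_{\infty}$ (this is where $\Vert \cdot \Vert$-boundedness of $B$ enters), which the previous step makes uniformly small. For the middle term, the scaling inclusion $\mathrm{Lip}_{\ell}^{r'}(G,d) \subseteq \max(\ell,r') \cdot \mathrm{Lip}_{1}^{1}(G,d)$ yields $\vert (\mu-\nu)(f_{\ell}) \vert \leq 2\max(\ell,r')\, \mathrm{p}_{d}(\mu-\nu)$, so it is controlled by $\mathrm{p}_{d}(\mu-\nu)$ with a constant depending only on $H$ and $\epsilon$, not on $f$. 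Hence a suitable $\mathrm{p}_{d}$-ball around any point of $B$ lies inside the prescribed UEB-neighbourhood, which gives the reverse inclusion on $B$ and completes the proof.

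The main obstacle is the uniform regularisation step: one must obtain a \emph{single} Lipschitz constant $\ell$ serving all $f \in H$ simultaneously, together with a uniform bound on $\Vert f - f_{\ell} \Vert_{\infty}$. This is exactly where the hypotheses are indispensable --- the shared modulus of continuity comes from $H$ being uniformly (not merely pointwise) equicontinuous, and right-invariance of $d$ is what lets one pass between right-uniform equicontinuity and the metric estimates underlying the inf-convolution. I expect the remaining bookkeeping (the real/imaginary split and the scaling constants) to be routine.
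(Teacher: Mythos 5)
Your proof is correct and follows essentially the same route as the paper: one inclusion because $\mathrm{Lip}_{1}^{1}(G,d)$ is itself a UEB set, the other by uniformly approximating an arbitrary UEB set (split into real and imaginary parts) by a scaled copy of $\mathrm{Lip}_{1}^{1}(G,d)$ and using the norm bound on $B$ to absorb the approximation error. The only real difference is that the paper outsources the approximation step to \cite[Lemma~5.20(1)]{PachlBook}, whereas you prove it from scratch via the inf-convolution $f_{\ell}(x)=\inf_{y}\left(f(y)+\ell\, d(x,y)\right)$, which is a correct and self-contained substitute.
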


\begin{proof} Evidently, $\mathrm{p}_{d}$ constitutes a semi-norm on $\mathrm{RUCB}(G)^{\ast}$. Since $d$ is right-invariant and generates the topology of $G$, it follows that $\mathrm{Lip}_{1}^{1}(G,d)$ generates a $\Vert \cdot \Vert_{\infty}$-dense linear subspace of $\mathrm{RUCB}(G)$ (see~\cite[Lemma~5.20(2)]{PachlBook}), whence $\mathrm{p}_{d}$ is in fact a norm on~$\mathrm{RUCB}(G)^{\ast}$. Moreover, as $d$ is right-invariant and continuous, $\mathrm{Lip}_{1}^{1}(G,d)$ is easily seen to be a member of $\mathrm{RUEB}(G)$, thus the topology $\tau_{d}$ generated by $\mathrm{p}_{d}$ is contained in the UEB topology $\tau_{\mathrm{UEB}}$. It remains to prove that, for every $\Vert \cdot \Vert$-bounded $B \subseteq \mathrm{RUCB}(G)^{\ast}$, the restriction of $\tau_{\mathrm{UEB}}$ to $B$ is contained in the restriction of $\tau_{d}$ to $B$. So, consider any $\Vert \cdot \Vert$-bounded subset $B \subseteq \mathrm{RUCB}(G)^{\ast}$. Let $\mu \in B$, $H \in \mathrm{RUEB}(G)$ and $\epsilon > 0$. Then $K \defeq \{ \Re(f) \mid f \in H \} \cup \{ \Im (f) \mid f \in H \}$ belongs to $\mathrm{RUEB}(G)$, too. Again due to $d$ being a right-invariant metric generating the topology~of~$G$, there exists a real $r > 0$ such that \begin{displaymath}
	\forall f \in K \ \exists g \in r \mathrm{Lip}_{1}^{1}(G,d) \colon \quad \Vert f - g \Vert_{\infty} \, \leq \, \tfrac{\epsilon}{1 + 8\sup_{\mu \in B} \Vert \mu \Vert} 
\end{displaymath} (see~\cite[Lemma~5.20(1)]{PachlBook}). Consequently, if $\nu \in B$, then \begin{align*}
	\sup\nolimits_{f \in H} \vert (\mu - \nu) (f) \vert \, &\leq \, 2 \sup\nolimits_{f \in K} \vert (\mu - \nu) (f) \vert \\
	& \leq \, 2\sup\nolimits_{g \in r \mathrm{Lip}_{1}^{1}(G,d)} \vert (\mu - \nu) (g) \vert + \tfrac{\epsilon}{2} \, = \, 2 r\mathrm{p}_{d}(\mu - \nu) + \tfrac{\epsilon}{2} \, .
\end{align*} Hence, $\!\left\{ \nu \in B \left\vert \, \mathrm{p}_{d}(\mu - \nu) \leq \tfrac{\epsilon}{4r} \right\} \subseteq \{ \nu \in B \mid \forall f \in H\colon \, \vert (\mu - \nu) (f) \vert \leq \epsilon \} \right. \!$. This shows that the restriction of $\tau_{\mathrm{UEB}}$ to $B$ is contained in the corresponding restriction of the topology $\tau_{d}$. \end{proof}

\section{Convolution semigroups}\label{section:convolution}

Studying the amenability of general topological groups, we take advantage of the intrinsic right topological semigroup structure present on the continuous duals of the corresponding spaces of uniformly continuous bounded functions. A \emph{right topological semigroup} is a semigroup $S$ together with a topology on $S$ such that $S \to S, \, s \mapsto st$ is continuous for every $t \in S$, and the \emph{topological centre} of $S$ is then defined to be the subset \begin{displaymath}
	\Lambda (S) \, \defeq \, \{ t \in S \mid S \to S, \, s \mapsto ts \ \text{continuous} \} ,
\end{displaymath} which is easily seen to form a subsemigroup of $S$. Before proceeding to semigroups associated to topological groups, let us note the following general fact.

\begin{lem}\label{lemma:semigroup.closure} If $S$ is a right topological semigroup and $T$ is a subsemigroup of $\Lambda (S)$, then $\overline{T}$ is a subsemigroup of $S$. \end{lem}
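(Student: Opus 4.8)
The plan is to establish the two inclusions $T\overline{T} \subseteq \overline{T}$ and then $\overline{T}\,\overline{T} \subseteq \overline{T}$ in turn, exploiting at each stage the continuity of precisely one of the two multiplication maps. Throughout I will use the elementary topological fact that a continuous self-map $\phi$ of $S$ satisfies $\phi(\overline{A}) \subseteq \overline{\phi(A)}$ for every subset $A \subseteq S$.

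First I would fix $t \in T$. Since $T \subseteq \Lambda(S)$, the left translation $L_{t} \colon S \to S, \, s \mapsto ts$ is continuous, and because $T$ is a subsemigroup we have $L_{t}(T) = tT \subseteq T$. The closure fact then gives $t\overline{T} = L_{t}(\overline{T}) \subseteq \overline{L_{t}(T)} \subseteq \overline{T}$. As $t \in T$ was arbitrary, this yields $T\overline{T} \subseteq \overline{T}$.

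Next I would fix $y \in \overline{T}$ and consider the right translation $R_{y} \colon S \to S, \, s \mapsto sy$, which is continuous by the very definition of a right topological semigroup. The previous step shows $R_{y}(T) = Ty \subseteq T\overline{T} \subseteq \overline{T}$, so the closure fact gives $\overline{T}y = R_{y}(\overline{T}) \subseteq \overline{R_{y}(T)} \subseteq \overline{T}$. Since $y \in \overline{T}$ was arbitrary, $\overline{T}\,\overline{T} \subseteq \overline{T}$, and hence $\overline{T}$ is a subsemigroup of $S$.

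The only point requiring care, and the reason the argument must be split into two steps, is that an element of $\overline{T}$ need not lie in the topological centre $\Lambda(S)$, so left translation by such an element cannot be assumed continuous. This forces us to use the membership $T \subseteq \Lambda(S)$ only against elements of $T$ in the first step, while the passage to closures in the second variable rests solely on the right topological structure in the second step.
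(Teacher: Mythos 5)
Your proof is correct and follows essentially the same route as the paper's: first $T\overline{T} \subseteq \overline{T}$ via continuity of left translation by elements of $T \subseteq \Lambda(S)$, then $\overline{T}\,\overline{T} \subseteq \overline{T}$ via continuity of right translations. The explicit remark about why the two steps must be separated is a helpful addition but the argument is the same.
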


\begin{proof} Since $T$ is a subsemigroup of $\Lambda (S)$, it follows that $t\overline{T} \subseteq \overline{tT} \subseteq \overline{T}$ for every $t \in T$, which means that $T\overline{T} \subseteq \overline{T}$. Hence, if $t \in \overline{T}$, then $\overline{T}t \subseteq \overline{Tt} \subseteq \overline{\overline{T}} = \overline{T}$, as desired. \end{proof}

We now turn to the natural semigroup structure on $\mathrm{RUCB}(G)^{\ast}$ for an arbitrary topological group $G$. The details of the construction are recorded in the subsequent lemma (recollected from~\cite[Section~2.2]{BerglundJunghennMilnes} and \cite[Section~6.1]{PestovBook}), whose straightforward proof we omit.

\begin{lem}[cf.~Section~2.2 in~\cite{BerglundJunghennMilnes}]\label{lemma:convolution} Let $G$ be a topological group. The following hold. \begin{itemize}
	\item[$(1)$] For any $\mu \in \mathrm{RUCB}(G)^{\ast}$ and $f \in \mathrm{RUCB}(G)$, the function \begin{displaymath}
		\qquad \qquad \Phi_{\mu}f \colon \, G \, \longrightarrow \, \mathbb{C} , \quad g \, \longmapsto \, \mu (f \circ \lambda_{g})
	\end{displaymath} is right-uniformly continuous and bounded with $\Vert \Phi_{\mu}f \Vert_{\infty} \leq \Vert \mu \Vert \Vert f \Vert_{\infty}$.
	\item[$(2)$] For all $\mu \in \mathrm{RUCB}(G)^{\ast}$, $f \in \mathrm{RUCB}(G)$ and $g \in G$, \begin{displaymath}
		\qquad \qquad \Phi_{\mu}(f \circ \lambda_{g}) \, = \, (\Phi_{\mu}f) \circ \lambda_{g} , \qquad \qquad \Phi_{g\mu}f \, = \, (\Phi_{\mu}f) \circ \rho_{g} .
	\end{displaymath}
	\item[$(3)$] Let $\mu \in \mathrm{RUCB}(G)^{\ast}$. Then, $\Phi_{\mu} \colon \mathrm{RUCB}(G) \to \mathrm{RUCB}(G)$ is a bounded linear operator with $\Vert \Phi_{\mu} \Vert = \Vert \mu \Vert$. If $\mu$ is positive (resp., unital, a ${}^{\ast}$-homomorphism), then so is $\Phi_{\mu}$.
	\item[$(4)$] Let $\mu, \nu \in \mathrm{RUCB}(G)^{\ast}$. Then \begin{displaymath}
		\qquad \qquad \mu\nu \colon \, \mathrm{RUCB}(G) \, \longrightarrow \, \mathbb{C}, \quad f \, \longmapsto \, \mu (\Phi_{\nu}f)
	\end{displaymath} belongs to $\mathrm{RUCB}(G)^{\ast}$, $\Vert \mu \nu \Vert \leq \Vert \mu \Vert \Vert \nu \Vert$ and $\Phi_{\mu \nu} = \Phi_{\mu} \circ \Phi_{\nu}$. If both $\mu$ and $\nu$ are positive (resp., unital, ${}^{\ast}$-homomorphisms), then so is $\mu\nu$. In particular, $\mu\nu \in \mathrm{M}(G)$ if $\mu, \nu \in \mathrm{M}(G)$, and $\mu\nu \in \mathrm{S}(G)$ if $\mu, \nu \in \mathrm{S}(G)$.
	\item[$(5)$] For all $\mu, \nu, \xi \in \mathrm{RUCB}(G)^{\ast}$, we have $\mu (\nu \xi) = (\mu \nu) \xi$.
\end{itemize} \end{lem}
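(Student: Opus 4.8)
The plan is to verify the five assertions in the order given, since each feeds into the next: the bounds in $(1)$ make $\Phi_{\mu}$ a well-defined operator, the intertwining identities in $(2)$ drive the computations in $(4)$ and $(5)$, and the homomorphism property $\Phi_{\mu\nu} = \Phi_{\mu} \circ \Phi_{\nu}$ reduces associativity to associativity of operator composition. For $(1)$, boundedness is immediate: since $\lambda_{g}$ is a bijection of $G$ we have $\Vert f \circ \lambda_{g} \Vert_{\infty} = \Vert f \Vert_{\infty}$, whence $\vert \Phi_{\mu}f(g) \vert = \vert \mu(f \circ \lambda_{g}) \vert \leq \Vert \mu \Vert \, \Vert f \Vert_{\infty}$ for every $g \in G$. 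The substantive point is right-uniform continuity. I would write $\Phi_{\mu}f(x) - \Phi_{\mu}f(y) = \mu(f \circ \lambda_{x} - f \circ \lambda_{y})$ and estimate by $\Vert \mu \Vert \, \Vert f \circ \lambda_{x} - f \circ \lambda_{y} \Vert_{\infty}$. The key computation is the change of variables $w = xz$, under which $\Vert f \circ \lambda_{x} - f \circ \lambda_{y} \Vert_{\infty} = \sup_{w} \vert f(w) - f(yx^{-1}w) \vert = \Vert f - f \circ \lambda_{yx^{-1}} \Vert_{\infty}$; and since $w$ and $yx^{-1}w$ always satisfy $w(yx^{-1}w)^{-1} = xy^{-1}$, the right-uniform continuity of $f$ (applied at level $\epsilon / \Vert \mu \Vert$, treating $\mu = 0$ trivially) supplies a neighbourhood $U$ of the identity with $xy^{-1} \in U \Rightarrow \Vert f \circ \lambda_{x} - f \circ \lambda_{y} \Vert_{\infty} \leq \epsilon / \Vert \mu \Vert$. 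This is exactly what is needed, and I expect it to be the only genuinely delicate step.

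Both identities in $(2)$ follow from the elementary relations $\lambda_{g} \circ \lambda_{x} = \lambda_{gx}$ and $\lambda_{x} \circ \lambda_{g} = \lambda_{xg}$ together with the definitions of $\Phi$, of $g\mu$, and of $\rho_{g}$; I would simply evaluate each side at an arbitrary $x \in G$. For $(3)$, linearity of $\Phi_{\mu}$ is clear, and $\Vert \Phi_{\mu} \Vert \leq \Vert \mu \Vert$ restates the bound from $(1)$; for the reverse inequality I would use $\Phi_{\mu}f(e) = \mu(f)$, so that $\Vert \mu \Vert = \sup_{\Vert f \Vert_{\infty} \leq 1} \vert \Phi_{\mu}f(e) \vert \leq \Vert \Phi_{\mu} \Vert$. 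Positivity, unitality, and the $\ast$-homomorphism property each transfer pointwise in $g$: positivity because $f \circ \lambda_{g}$ is positive whenever $f$ is, unitality because $\mathbf{1} \circ \lambda_{g} = \mathbf{1}$, and multiplicativity together with $\ast$-compatibility because $(fh) \circ \lambda_{g} = (f \circ \lambda_{g})(h \circ \lambda_{g})$ and $\bar{f} \circ \lambda_{g} = \overline{f \circ \lambda_{g}}$.

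Parts $(4)$ and $(5)$ are then formal. Well-definedness and the norm bound for $\mu\nu$ follow from $(1)$ and $(3)$ via $\vert \mu(\Phi_{\nu}f) \vert \leq \Vert \mu \Vert \, \Vert \Phi_{\nu}f \Vert_{\infty} \leq \Vert \mu \Vert \, \Vert \nu \Vert \, \Vert f \Vert_{\infty}$; the identity $\Phi_{\mu\nu} = \Phi_{\mu} \circ \Phi_{\nu}$ drops out by evaluating $\Phi_{\mu\nu}f(g) = \mu(\Phi_{\nu}(f \circ \lambda_{g}))$ and invoking the first relation of $(2)$ to move $\lambda_{g}$ outside $\Phi_{\nu}$. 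Preservation of positivity, unitality, and the $\ast$-homomorphism property passes through the already-established $(3)$ for $\Phi_{\nu}$ followed by the corresponding hypothesis on $\mu$; the assertions about $\mathrm{M}(G)$ and $\mathrm{S}(G)$ are then the special cases. Finally, associativity $(5)$ reduces, after applying $\Phi_{\mu\nu} = \Phi_{\mu} \circ \Phi_{\nu}$, to the chain $(\mu(\nu\xi))(f) = \mu(\Phi_{\nu}\Phi_{\xi}f) = ((\mu\nu)\xi)(f)$, i.e.\ to associativity of operator composition. The only real obstacle is the uniform-continuity estimate in $(1)$; everything else is bookkeeping built on the two composition identities $\lambda_{g} \circ \lambda_{x} = \lambda_{gx}$ and $\lambda_{x} \circ \lambda_{g} = \lambda_{xg}$.
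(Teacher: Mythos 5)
Your proof is correct; the paper itself omits the argument as ``straightforward'' (referring to Section~2.2 of Berglund--Junghenn--Milnes), and your write-up supplies exactly the standard details one would expect there. In particular you correctly isolate and handle the one non-formal step, the right-uniform continuity in~$(1)$, via the identity $w\left(yx^{-1}w\right)^{-1}=xy^{-1}$ and the estimate $\lvert \Phi_{\mu}f(x)-\Phi_{\mu}f(y)\rvert\leq\Vert\mu\Vert\,\Vert f-f\circ\lambda_{yx^{-1}}\Vert_{\infty}$, after which $(2)$--$(5)$ reduce, as you say, to the composition laws for $\lambda$ and $\rho$ and to $\Phi_{\mu\nu}=\Phi_{\mu}\circ\Phi_{\nu}$.
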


Let $G$ be a topological group. It follows that $\mathrm{RUCB}(G)^{\ast}$, together with the multiplication defined in Lemma~\ref{lemma:convolution}(4), constitutes a unital Banach algebra. For each $g \in G$, let us consider the state $\delta_{g} \colon \mathrm{RUCB}(G) \to \mathbb{C}, \, f \mapsto f(g)$. We observe that the multiplication on~$\mathrm{RUCB}(G)^{\ast}$ is compatible with the action of $G$ upon $\mathrm{RUCB}(G)^{\ast}$ introduced in Section~\ref{section:ueb}, in the sense that $g\mu = \delta_{g}\mu$ for all $\mu \in \mathrm{RUCB}(G)^{\ast}$ and $g \in G$. Furthermore, endowed with this multiplication and the weak-${}^{\ast}$ topology, $\mathrm{RUCB}(G)^{\ast}$ is easily seen to form a right topological semigroup, of which $\mathrm{M}(G)$ and $\mathrm{S}(G)$ are subsemigroups by Lemma~\ref{lemma:convolution}(4).

Let us briefly examine the topological centre of the right topological semigroup isolated above. Consider any topological group $G$. The \emph{convolution} of two Borel probability measures $\mu$ and $\nu$ on $G$ is the Borel probability measure $\mu\nu$ on $G$ defined by \begin{displaymath}
	(\mu \nu)(B) \, \defeq \, (\mu \otimes \nu) (\{ (x,y) \in G \mid xy \in B \}) \qquad \left(\text{$B\subseteq G$ Borel}\right) .
\end{displaymath} We note that $\spt (\mu \nu) \subseteq (\spt \mu) (\spt \nu)$ for any two Borel probability measures $\mu$ and $\nu$ on $G$. It is well known and straightforward to check that $\mathrm{Prob}(G)$ together with this convolution forms a semigroup and that $\mathrm{Prob}_{\mathrm{reg}}(G)$ is a subsemigroup thereof. The map $I_{G} \colon \mathrm{Prob}(G) \to \mathrm{M}(G)$ given by \begin{displaymath}
	I_{G}(\mu)(f) \, \defeq \, \int f \, d\mu \qquad \left(\mu \in \mathrm{Prob}(G), \, f \in \mathrm{RUCB}(G)\right) 
\end{displaymath} is a semigroup homomorphism, in particular $I_{G}(\mathrm{Prob}_{\mathrm{reg}}(G))$ and $I_{G}(\mathrm{Prob}(G))$ constitute subsemigroups of $\mathrm{M}(G)$. Since $\mathrm{Prob}_{\mathrm{reg}}(G)$ contains all Dirac measures on $G$, the latter moreover entails that both $I_{G}(\mathrm{Prob}_{\mathrm{reg}}(G))$ and $I_{G}(\mathrm{Prob}(G))$ are $G$-invariant subsets of $\mathrm{M}(G)$.

\begin{remark}\label{remark:measures.vs.functionals} Let $G$ be a topological group. Due to~\cite[Theorem~5.3]{PachlBook}, the restriction of $I_{G}$ to $\mathrm{Prob}_{\mathrm{reg}}(G)$ is injective. Furthermore, if $G$ is metrizable (equivalently, first-countable), then the map $I_{G}$ itself is injective, since in this case every closed subset of $G$ is the zero set of some element of $\mathrm{RUCB}(G)$. In any case, starting from Section~\ref{section:poisson}, we will not distinguish in notation between a Borel probability measure on $G$ and its image under $I_{G}$, and we will adopt terminology introduced so far for elements of $\mathrm{M}(G)$ (such as in Definition~\ref{definition:convergence.to.invariance}) for members of $\mathrm{Prob}(G)$ accordingly. \end{remark}

With the notation introduced above at hand, we recollect two results about topological centres from literature. The first one is a generalization of a result about locally compact groups by Wong~\cite[Lemma~3.1]{wong}.

\begin{prop}[\cite{FerriNeufang}, Proposition~4.2]\label{proposition:topological.centre} For any topological group $G$, \begin{displaymath}
	I_{G}(\mathrm{Prob}_{\mathrm{reg}}(G)) \, \subseteq \, \Lambda(\mathrm{M}(G)) .
\end{displaymath} \end{prop}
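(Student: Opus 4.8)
The plan is to unwind the convolution product and reduce the claim to the weak-${}^{\ast}$ continuity of a single evaluation. Fix $m \in \mathrm{Prob}_{\mathrm{reg}}(G)$ and set $\mu \defeq I_{G}(m)$. Since $\mathrm{M}(G)$ is a right topological semigroup in the weak-${}^{\ast}$ topology, proving $\mu \in \Lambda(\mathrm{M}(G))$ amounts to showing that left multiplication $\mathrm{M}(G) \to \mathrm{M}(G), \, \nu \mapsto \mu\nu$ is weak-${}^{\ast}$ continuous. Because the weak-${}^{\ast}$ topology is the initial topology generated by the evaluations $\nu \mapsto \nu(f)$ with $f \in \mathrm{RUCB}(G)$, it suffices to show that $\nu \mapsto (\mu\nu)(f)$ is weak-${}^{\ast}$ continuous for each fixed $f$. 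By Lemma~\ref{lemma:convolution}(4) and the definition of $I_{G}$, one has $(\mu\nu)(f) = \mu(\Phi_{\nu}f) = \int \Phi_{\nu}f \, dm$, so I would analyse the net of integrands $\Phi_{\nu}f$ directly.

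The key observation, which I would isolate first, is that the family $\{ \Phi_{\nu}f \mid \nu \in \mathrm{M}(G) \}$ is a member of $\mathrm{RUEB}(G)$. Boundedness by $\Vert f \Vert_{\infty}$ is immediate from Lemma~\ref{lemma:convolution}(1) together with $\Vert \nu \Vert = 1$. For right-uniform equicontinuity, I would estimate $\lvert \Phi_{\nu}f(x) - \Phi_{\nu}f(y) \rvert \leq \Vert f\circ\lambda_{x} - f\circ\lambda_{y} \Vert_{\infty} = \sup_{z \in G} \lvert f(xz) - f(yz) \rvert$ and note the identity $(xz)(yz)^{-1} = xy^{-1}$, so that the right-uniform continuity of $f$ supplies, for each $\epsilon > 0$, a single neighbourhood $U$ of the neutral element with $xy^{-1} \in U \Rightarrow \lvert \Phi_{\nu}f(x) - \Phi_{\nu}f(y) \rvert \leq \epsilon$, uniformly in $\nu$.

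Now consider a net $(\nu_{i})_{i \in I}$ in $\mathrm{M}(G)$ with $\nu_{i} \to \nu$ weak-${}^{\ast}$. Since $f\circ\lambda_{g} \in \mathrm{RUCB}(G)$ for every $g \in G$, weak-${}^{\ast}$ convergence yields $\Phi_{\nu_{i}}f \to \Phi_{\nu}f$ pointwise on $G$. The main obstacle is that one cannot pass from pointwise convergence of a \emph{net} of integrands to convergence of the integrals via dominated convergence, which is available only for sequences. This is exactly where the equicontinuity from the previous step does the work: I would prove the elementary fact that a uniformly equicontinuous, uniformly bounded net converging pointwise on a compact set converges uniformly there (a net version of the Arzel\`a--Ascoli argument, obtained by covering the compact set by finitely many translates $Ux_{1}, \dots, Ux_{n}$, applying pointwise convergence at the finitely many centres $x_{k}$, and using the directedness of $I$ to choose a single index beyond which all $n$ estimates hold).

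Finally I would combine this with the regularity of $m$. Given $\epsilon > 0$, regularity provides a compact set $K \subseteq G$ with $m(G \setminus K) \leq \epsilon$; on $K$ the preceding step furnishes an index $i_{0}$ beyond which $\sup_{x \in K} \lvert \Phi_{\nu_{i}}f(x) - \Phi_{\nu}f(x) \rvert \leq \epsilon$, while on the complement both integrands are bounded by $\Vert f \Vert_{\infty}$. Splitting the integral over $K$ and $G \setminus K$ then gives $\lvert (\mu\nu_{i})(f) - (\mu\nu)(f) \rvert \leq \epsilon + 2 \Vert f \Vert_{\infty} \epsilon$ for all $i \geq i_{0}$, and letting $\epsilon \to 0$ yields $(\mu\nu_{i})(f) \to (\mu\nu)(f)$. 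As $f \in \mathrm{RUCB}(G)$ was arbitrary, $\mu\nu_{i} \to \mu\nu$ weak-${}^{\ast}$, which establishes the continuity of left multiplication by $\mu$ and hence $\mu \in \Lambda(\mathrm{M}(G))$.
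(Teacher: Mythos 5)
Your proof is correct. The paper gives no proof of this proposition, citing it from Ferri--Neufang, so there is nothing internal to compare against; your argument --- reducing weak-${}^{\ast}$ continuity of $\nu \mapsto \mu\nu$ to that of $\nu \mapsto \int \Phi_{\nu}f\,dm$, observing that $\left\{ \Phi_{\nu}f \mid \nu \in \mathrm{M}(G) \right\}$ is UEB (which is exactly Lemma~\ref{lemma:introverted}(2) applied to the norm-bounded set $\mathrm{M}(G)$), upgrading pointwise to uniform convergence on compacta via equicontinuity, and finishing with inner regularity of $m$ --- is the standard one and matches the proof in the cited reference.
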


The second is a consequence of~\cite[Theorem~7.25]{PachlBook} and~\cite[Theorem~9.41(1)]{PachlBook}.

\begin{prop}[\cite{PachlBook}, Theorems~7.25, 9.41(1)]\label{proposition:topological.centre.pachl} For a separable topological group $G$, \begin{displaymath}
	I_{G}(\mathrm{Prob}(G)) \, \subseteq \, \Lambda(\mathrm{M}(G)) .
\end{displaymath} \end{prop}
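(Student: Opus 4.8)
The plan is to factor the inclusion through Pachl's space of \emph{uniform measures}
$\mathrm{M}_{u}(G) \subseteq \mathrm{RUCB}(G)^{\ast}$, i.e.\ those functionals whose restriction to every member of $\mathrm{RUEB}(G)$ is continuous for the topology of pointwise convergence on $G$. I would split the proof into two independent steps: \textbf{(i)} every uniform measure lying in $\mathrm{M}(G)$ already belongs to $\Lambda(\mathrm{M}(G))$, and \textbf{(ii)} separability of $G$ forces $I_{G}(\mathrm{Prob}(G)) \subseteq \mathrm{M}_{u}(G)$. Combining these yields $I_{G}(\mathrm{Prob}(G)) \subseteq \mathrm{M}_{u}(G) \cap \mathrm{M}(G) \subseteq \Lambda(\mathrm{M}(G))$, which is the assertion.

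Step (i) is where the semigroup structure enters, and I expect it to be the content provided by \cite[Theorem~7.25]{PachlBook}. Fix a uniform measure $\mu \in \mathrm{M}(G)$ and a net $s_{i} \to s$ in $\mathrm{M}(G)$ for the weak-${}^{\ast}$ topology; I must show $\mu s_{i} \to \mu s$. For each $f \in \mathrm{RUCB}(G)$, Lemma~\ref{lemma:convolution}(4) gives $(\mu s_{i})(f) = \mu(\Phi_{s_{i}}f)$ and $(\mu s)(f) = \mu(\Phi_{s}f)$. Since every state has norm one, the computation $\Phi_{s}f(x) - \Phi_{s}f(y) = s\!\left((f\circ\lambda_{x}) - (f\circ\lambda_{y})\right)$ together with right-uniform continuity of $f$ (noting $xy^{-1} = (xg)(yg)^{-1}$) shows that the family $\{\Phi_{s}f \mid s \in \mathrm{M}(G)\}$ is right-uniformly equicontinuous and bounded by $\Vert f \Vert_{\infty}$, hence a single member of $\mathrm{RUEB}(G)$. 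Weak-${}^{\ast}$ convergence gives $\Phi_{s_{i}}f(g) = s_{i}(f\circ\lambda_{g}) \to s(f\circ\lambda_{g}) = \Phi_{s}f(g)$ for every $g$, i.e.\ pointwise convergence inside this UEB set; since $\mu$ is a uniform measure it is pointwise-continuous there, so $(\mu s_{i})(f) \to (\mu s)(f)$. As $f$ was arbitrary, $s \mapsto \mu s$ is weak-${}^{\ast}$ continuous, that is, $\mu \in \Lambda(\mathrm{M}(G))$.

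Step (ii), the only place separability is used, should be the content of \cite[Theorem~9.41(1)]{PachlBook}. Let $m \in \mathrm{Prob}(G)$, $H \in \mathrm{RUEB}(G)$, and a net $(f_{i})$ in $H$ converging pointwise to $f \in H$; I must check $\int f_{i}\,dm \to \int f\,dm$. Fixing a countable dense $D \subseteq G$, equicontinuity of $H$ shows that on $H$ the topology of pointwise convergence on $G$ coincides with that of pointwise convergence on $D$ (given $g$, choose $d \in D \cap U^{-1}g$ and use a $3\epsilon$-estimate), and the latter is pseudometrizable because $D$ is countable. Continuity of $I_{G}(m)$ on the pseudometrizable space $H$ is therefore equivalent to sequential continuity, which holds by dominated convergence since $H$ is uniformly bounded. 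Hence $I_{G}(m) \in \mathrm{M}_{u}(G)$.

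The main obstacle is step (ii): pushing a pointwise limit through an integral against a \emph{general} (not necessarily regular) Borel measure. For nets this fails in general, and it is precisely separability — via pseudometrizability of equicontinuous sets in the pointwise topology, which reduces nets to sequences so that dominated convergence applies — that rescues it. This is exactly the difference with Proposition~\ref{proposition:topological.centre}, where regularity was invoked to reach $\Lambda(\mathrm{M}(G))$; here separability substitutes for regularity. I would rely on Pachl's Theorem~9.41(1) for the full measure-theoretic detail rather than reproving it.
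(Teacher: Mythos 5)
Your proposal is correct and follows the same route as the paper, which offers no argument of its own beyond citing Pachl's Theorems~7.25 and~9.41(1): your two steps are precisely the content of those two citations (uniform measures lying in $\mathrm{M}(G)$ belong to $\Lambda(\mathrm{M}(G))$, and separability forces $I_{G}(\mathrm{Prob}(G))$ into the uniform measures), and your sketches of both — the UEB-equicontinuity of $\{\Phi_{s}f \mid s \in \mathrm{M}(G)\}$ for step~(i), and pseudometrizability of pointwise convergence on equicontinuous sets over a countable dense subset plus dominated convergence for step~(ii) — are sound. In effect you have supplied the details the paper delegates to Pachl's book.
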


If $G$ is a locally compact group, then the inclusion noted in Proposition~\ref{proposition:topological.centre} above is indeed an equality, i.e., $I_{G}(\mathrm{Prob}_{\mathrm{reg}}(G)) = \Lambda(\mathrm{M}(G))$. This follows from the work of Neufang~\cite{neufang} sharpening a result by Lau~\cite{Lau86}. Natural analogues of this result for different classes of topological groups are due to Ferri and Neufang~\cite{FerriNeufang}, and Pachl~\cite{pachl}. For more details, we refer to~\cite[Section~9.4]{PachlBook}.

For the remainder of this section, we turn to a class of function sets on topological groups, classically referred to as introverted sets~\cite{day,GranirerLau}.

\begin{definition} Let $G$ be a topological group and let $H \subseteq \mathrm{RUCB}(G)$. Then $H$ will be called \emph{right-translation closed} if $f \circ \rho_{g} \in H$ for all $f \in H$ and $g  \in G$, and $H$ will be called \emph{introverted} if $\Phi_{\mu}f \in H$ whenever $f \in H$ and $\mu  \in \mathrm{M}(G)$. \end{definition}

When dealing with introverted sets, we will make use of the following observations.

\begin{lem}\label{lemma:introverted} Let $G$ be a topological group and $f \in \mathrm{RUCB}(G)$. The linear map \begin{displaymath}
	\Psi_{f} \colon \, \mathrm{RUCB}(G)^{\ast} \longrightarrow \, \mathrm{RUCB}(G), \quad \mu \, \longmapsto \, \Phi_{\mu}f
\end{displaymath} is continuous with respect to the weak-${}^{\ast}$ topology on $\mathrm{RUCB}(G)^{\ast}$ and the topology of pointwise convergence on $\mathrm{RUCB}(G)$. Furthermore, \begin{itemize}
	\item[$(1)$] $\Psi_{f}(\mathrm{M}(G)) = \convc \{ f \circ \rho_{g} \mid g \in G \}$, and
	\item[$(2)$] $\Psi_{f}(B) \in \mathrm{RUEB}(G)$ for every norm-bounded $B \subseteq \mathrm{RUCB}(G)^{\ast}$.
\end{itemize} \end{lem}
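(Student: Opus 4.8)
The plan is to establish the continuity statement first and then treat the two itemized assertions separately, reserving the genuinely substantive work for part (1).

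\emph{Continuity.} Since the topology of pointwise convergence on $\mathrm{RUCB}(G)$ is the initial topology induced by the evaluation functionals $h \mapsto h(g)$ $(g \in G)$, it suffices to show that $\mu \mapsto (\Psi_{f}\mu)(g)$ is weak-${}^{\ast}$ continuous for each fixed $g$. But $(\Psi_{f}\mu)(g) = (\Phi_{\mu}f)(g) = \mu(f \circ \lambda_{g})$, and $f \circ \lambda_{g} \in \mathrm{RUCB}(G)$ (right-uniform continuity is preserved under left translation, since conjugation by $g$ is a homeomorphism of $G$ fixing the neutral element, so the modulus of continuity of $f$ transforms to one for $f \circ \lambda_{g}$). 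Hence $\mu \mapsto \mu(f \circ \lambda_{g})$ is evaluation at a fixed element of $\mathrm{RUCB}(G)$, which is weak-${}^{\ast}$ continuous by definition, and $\Psi_{f}$ is continuous as claimed.

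\emph{Part (1).} First, the computation $(\Psi_{f}\delta_{g})(x) = \delta_{g}(f \circ \lambda_{x}) = f(xg) = (f \circ \rho_{g})(x)$ gives $\Psi_{f}(\delta_{g}) = f \circ \rho_{g}$. The crucial intermediate step, and the heart of the argument, is the identity
\[
	\mathrm{M}(G) \, = \, \convc \{ \delta_{g} \mid g \in G \} \qquad (\text{weak-}{}^{\ast}\text{ closure}) .
\]
Here $\supseteq$ is immediate from weak-${}^{\ast}$ compactness and convexity of $\mathrm{M}(G)$, while $\subseteq$ I would obtain by Hahn--Banach separation: if a state $\mu$ lay outside the right-hand side, there would exist $h \in \mathrm{RUCB}(G)$ and $c \in \mathbb{R}$ with $\Re \mu(h) > c \geq \Re h(g)$ for all $g$; passing to the real part $\Re h \in \mathrm{RUCB}(G)$, for which $\mu(\Re h) = \Re \mu(h)$ since $\mu$ is positive, this yields a real-valued function with $\mu(\Re h) > c \geq \sup_{g} (\Re h)(g)$, contradicting the fact that a state of a commutative unital $C^{\ast}$-algebra satisfies $\mu(k) \leq \sup k$ for real-valued $k$ (because $(\sup k)\mathbf{1} - k \geq 0$). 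Granting this identity, continuity and linearity of $\Psi_{f}$ give $\Psi_{f}(\mathrm{M}(G)) \subseteq \convc \{ f \circ \rho_{g} \mid g \in G \}$ (the closure now taken for pointwise convergence), because $\Psi_{f}$ sends $\mathrm{conv}\{\delta_{g}\}$ onto $\mathrm{conv}\{f \circ \rho_{g}\}$ and carries weak-${}^{\ast}$ limits to pointwise limits. The reverse inclusion follows since $\Psi_{f}(\mathrm{M}(G))$ is convex and, as the continuous image of the weak-${}^{\ast}$ compact set $\mathrm{M}(G)$, is pointwise compact and hence pointwise closed; containing every $f \circ \rho_{g}$, it must then contain $\convc \{ f \circ \rho_{g} \mid g \in G \}$.

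\emph{Part (2).} Let $B$ be norm-bounded with $\Vert \mu \Vert \leq M$ for all $\mu \in B$. Uniform boundedness of $\Psi_{f}(B)$ is immediate from Lemma~\ref{lemma:convolution}(1), which gives $\Vert \Psi_{f}\mu \Vert_{\infty} = \Vert \Phi_{\mu}f \Vert_{\infty} \leq \Vert \mu \Vert \Vert f \Vert_{\infty} \leq M \Vert f \Vert_{\infty}$. For right-uniform equicontinuity, fix $\epsilon > 0$ and use right-uniform continuity of $f$ to pick a neighborhood $U$ of the neutral element with $\vert f(a) - f(b) \vert \leq \tfrac{\epsilon}{1+M}$ whenever $ab^{-1} \in U$. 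The key point is the cancellation $(xz)(yz)^{-1} = xy^{-1}$: for $x,y$ with $xy^{-1} \in U$ and all $z \in G$ we get $\vert f(xz) - f(yz) \vert \leq \tfrac{\epsilon}{1+M}$, so $\Vert f \circ \lambda_{x} - f \circ \lambda_{y} \Vert_{\infty} \leq \tfrac{\epsilon}{1+M}$, whence, uniformly in $\mu \in B$,
\[
	\lvert (\Psi_{f}\mu)(x) - (\Psi_{f}\mu)(y) \rvert \, = \, \lvert \mu(f \circ \lambda_{x} - f \circ \lambda_{y}) \rvert \, \leq \, M \lVert f \circ \lambda_{x} - f \circ \lambda_{y} \rVert_{\infty} \, \leq \, \epsilon .
\]
This exhibits $\Psi_{f}(B)$ as a member of $\mathrm{RUEB}(G)$. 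The only non-formal ingredient in the whole proof is the separation identity $\mathrm{M}(G) = \convc\{\delta_{g}\}$ underlying part (1), together with the bookkeeping of which topology each closed convex hull refers to (weak-${}^{\ast}$ on the domain, pointwise convergence on the range); the rest is a direct appeal to the definitions and to Lemma~\ref{lemma:convolution}(1).
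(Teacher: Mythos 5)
Your proof is correct and follows essentially the same route as the paper's: continuity via composition with the evaluation functionals $\delta_{x}\circ\Psi_{f}=\eta_{f\circ\lambda_{x}}$, part (1) via the weak-${}^{\ast}$ density of $\mathrm{conv}\{\delta_{g}\mid g\in G\}$ in $\mathrm{M}(G)$ together with linearity, continuity and compactness, and part (2) via the cancellation $(xz)(yz)^{-1}=xy^{-1}$ and the bound $\lvert\mu(h)\rvert\leq\Vert\mu\Vert\,\Vert h\Vert_{\infty}$. The only deviation is that the paper simply cites \cite[Corollary~P.6]{PachlBook} for the density of convex combinations of Dirac measures in $\mathrm{M}(G)$, whereas you prove it inline by Hahn--Banach separation and the inequality $\mu(k)\leq\sup k$ for real-valued $k$ and states $\mu$ --- a valid, self-contained substitute.
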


\begin{proof} For every $h \in \mathrm{RUCB}(G)$, let $\eta_{h} \colon \mathrm{RUCB}(G)^{\ast} \longrightarrow \mathbb{C}, \, \mu \mapsto \mu (h)$. If $x \in G$, then \begin{displaymath}
	\left(\delta_{x} \circ \Psi_{f}\right)\! (\mu) = \delta_{x}(\Phi_{\mu}f) = (\Phi_{\mu}f)(x) = \mu (f \circ \lambda_{x}) = \eta_{f \circ \lambda_{x}}(\mu)
\end{displaymath} for all $\mu \in \mathrm{RUCB}(G)^{\ast}$, i.e., $P_{x} \circ \Psi_{f} = Q_{f \circ \lambda_{x}}$. This readily entails the continuity stated above. Since $\Psi_{f}(\delta_{x}) = \Phi_{\delta_{x}}f = f \circ \rho_{x}$ for all $x \in G$ and the convex hull of $\{ \delta_{x} \mid x \in G \}$ is dense in~the compact space $\mathrm{M}(G)$ (see~\cite[Corollary~P.6]{PachlBook}), continuity and linearity of $\Psi_{f}$ imply that \begin{displaymath}
	\Psi_{f}(\mathrm{M}(G)) \, = \, \convc \{ T_{f}(\delta_{x}) \mid x \in G \} \, = \, \convc \{ f \circ \rho_{x} \mid x \in G \} ,
\end{displaymath} which proves~(1). In order to verify~(2), let $B$ be a norm-bounded subset of $\mathrm{RUCB}(G)^{\ast}$ and consider $s \defeq \sup_{\mu \in B} \Vert \mu \Vert < \infty$. Let $\epsilon > 0$. As $f$ is right-uniformly continuous, we find a neighborhood $U$ of the neutral element in $G$ such that $\Vert f - (f \circ \lambda_{g}) \Vert_{\infty} \leq \tfrac{\epsilon}{s+1}$ for all $g \in U$. We claim that \begin{displaymath}
	\forall x,y \in G , \, xy^{-1} \in U \colon \quad \sup\nolimits_{\mu \in B} \vert (\Phi_{\mu}f)(x) - (\Phi_{\mu}f)(y) \vert \leq \epsilon .
\end{displaymath} Indeed, if $\mu \in B$ and $x,y \in G$ with $xy^{-1} \in U$, then \begin{displaymath}
	\vert (\Phi_{\mu}f)(x) - (\Phi_{\mu}f)(y) \vert \, = \, \vert \mu((f \circ \lambda_{x})-(f \circ \lambda_{y})) \vert \, \leq \, s \left\Vert f - \left(f \circ \lambda_{xy^{-1}}\right) \right\Vert_{\infty} \, \leq \, \epsilon \, .
\end{displaymath} This shows that $\Psi_{f}(B) = \{ \Phi_{\mu}f \mid \mu \in B \}$ is right-uniformly equicontinuous. Of course, $\Psi_{f}(B)$ is $\Vert \cdot \Vert_{\infty}$-bounded by Lemma~\ref{lemma:convolution}(4). Hence, $\Psi_{f}(B) \in \mathrm{RUEB}(G)$. \end{proof}

\begin{cor}\label{corollary:introverted} Let $G$ be a topological group and let $H \subseteq \mathrm{RUCB}(G)$. If $H$ is introverted, then $H$ is right-translation invariant. Conversely, if $H$ right-translation closed, convex, and closed w.r.t.~the topology of pointwise convergence, then $H$ is introverted. \end{cor}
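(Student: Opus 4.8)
The plan is to deduce both implications directly from Lemma~\ref{lemma:introverted}, which already encapsulates the analytic content; what remains is essentially a dictionary translating the operator-theoretic description of $\Psi_{f}(\mathrm{M}(G))$ into the combinatorial closure conditions imposed on $H$.

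For the first implication, suppose $H$ is introverted. I would fix $f \in H$ and $g \in G$ and observe that the Dirac functional $\delta_{g}$ lies in $\mathrm{M}(G)$, being a ${}^{\ast}$-homomorphism and hence in particular a state. The identity $\Phi_{\delta_{g}}f = f \circ \rho_{g}$ (which is precisely the equality $\Psi_{f}(\delta_{x}) = f \circ \rho_{x}$ recorded in the proof of Lemma~\ref{lemma:introverted}, or equivalently a special case of Lemma~\ref{lemma:convolution}(2)) then yields $f \circ \rho_{g} = \Phi_{\delta_{g}}f \in H$. Since $f \in H$ and $g \in G$ were arbitrary, $H$ is right-translation closed, as desired.

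For the converse, assume $H$ is right-translation closed, convex, and closed with respect to the topology of pointwise convergence. I would fix $f \in H$ and invoke Lemma~\ref{lemma:introverted}(1), which identifies $\Psi_{f}(\mathrm{M}(G)) = \convc \{ f \circ \rho_{g} \mid g \in G \}$, the closure being taken in the topology of pointwise convergence. Right-translation closedness gives $\{ f \circ \rho_{g} \mid g \in G \} \subseteq H$; convexity upgrades this to $\conv \{ f \circ \rho_{g} \mid g \in G \} \subseteq H$; and closedness in the pointwise topology upgrades it further to $\convc \{ f \circ \rho_{g} \mid g \in G \} \subseteq H$. Hence $\Phi_{\mu}f = \Psi_{f}(\mu) \in H$ for every $\mu \in \mathrm{M}(G)$, and as $f \in H$ was arbitrary, this is exactly introvertedness.

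The argument carries no genuine obstacle once Lemma~\ref{lemma:introverted} is available; the only point requiring care is to match the topology appearing in the closed convex hull of Lemma~\ref{lemma:introverted}(1) with the hypothesis imposed on $H$. Indeed, the three hypotheses in the converse are calibrated precisely so that $\convc \{ f \circ \rho_{g} \mid g \in G \}$ is forced into $H$: dropping any one of them---translation closedness, convexity, or pointwise closedness---would break the corresponding containment step and sever the chain leading to $\Psi_{f}(\mathrm{M}(G)) \subseteq H$.
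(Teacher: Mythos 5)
Your proof is correct and follows essentially the same route as the paper, which simply records the corollary as an immediate consequence of Lemma~\ref{lemma:introverted}(1); you have merely spelled out the two directions (Dirac functionals for the forward implication, the identification $\Psi_{f}(\mathrm{M}(G)) = \convc \{ f \circ \rho_{g} \mid g \in G \}$ with pointwise-convergence closure for the converse) that the paper leaves implicit.
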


\begin{proof} This is an immediate consequence of Lemma~\ref{lemma:introverted}(1). \end{proof}

\begin{cor}\label{corollary:UEB.metric} Let $G$ be a topological group and let $d$ be a right-invariant metric on $G$ generating the topology of $G$. Then $\mathrm{p}_{d}(\mu \nu) \leq \mathrm{p}_{d}(\mu)$ for all $\mu \in \mathrm{RUCB}(G)^{\ast}$ and $\nu \in \mathrm{M}(G)$. \end{cor}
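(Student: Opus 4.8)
The plan is to reduce the inequality to a single mapping property of the operator $\Phi_{\nu}$, namely that it preserves the unit ball of $1$-Lipschitz functions. Unravelling the definitions, $\mathrm{p}_{d}(\mu\nu) = \sup\{ \lvert (\mu\nu)(f) \rvert \mid f \in \mathrm{Lip}_{1}^{1}(G,d) \}$, and by the definition of the convolution product in Lemma~\ref{lemma:convolution}(4) we have $(\mu\nu)(f) = \mu(\Phi_{\nu}f)$. Hence, once we know that $\Phi_{\nu}f \in \mathrm{Lip}_{1}^{1}(G,d)$ for every $f \in \mathrm{Lip}_{1}^{1}(G,d)$, each term satisfies $\lvert (\mu\nu)(f) \rvert = \lvert \mu(\Phi_{\nu}f) \rvert \leq \mathrm{p}_{d}(\mu)$, and taking the supremum over $f$ yields $\mathrm{p}_{d}(\mu\nu) \leq \mathrm{p}_{d}(\mu)$.

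So the whole proof reduces to showing that $\Phi_{\nu}$ maps $\mathrm{Lip}_{1}^{1}(G,d)$ into itself whenever $\nu \in \mathrm{M}(G)$. Three properties must be verified for $h \defeq \Phi_{\nu}f$ with $f \in \mathrm{Lip}_{1}^{1}(G,d)$: that $h$ is real-valued, that $\Vert h \Vert_{\infty} \leq 1$, and that $h$ is $1$-Lipschitz. The first two are immediate from $\nu$ being a state: as a positive linear functional $\nu$ is self-adjoint, so it sends the self-adjoint (i.e.\ real-valued) element $f \circ \lambda_{x}$ to a real number for each $x \in G$, whence $h$ is real-valued; and $\Vert h \Vert_{\infty} \leq \Vert \nu \Vert \Vert f \Vert_{\infty} \leq 1$ by Lemma~\ref{lemma:convolution}(1) together with $\Vert \nu \Vert = \nu(\mathbf{1}) = 1$.

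The Lipschitz estimate is where right-invariance of $d$ enters, and it is the one point that requires care. For $x,y \in G$ we write
\[
	h(x) - h(y) \, = \, \nu\bigl( (f \circ \lambda_{x}) - (f \circ \lambda_{y}) \bigr) ,
\]
and since states have norm one, $\lvert h(x) - h(y) \rvert \leq \Vert (f \circ \lambda_{x}) - (f \circ \lambda_{y}) \Vert_{\infty} = \sup_{z \in G} \lvert f(xz) - f(yz) \rvert$. The $1$-Lipschitz property of $f$ bounds each term by $d(xz,yz)$, and here the right-invariance of $d$ is exactly what collapses $d(xz,yz)$ to $d(x,y)$ uniformly in $z$, giving $\lvert h(x) - h(y) \rvert \leq d(x,y)$. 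I expect this to be the main (and essentially only) subtle step: one must check that the left translations $\lambda_{x}, \lambda_{y}$ appearing in the definition of $\Phi_{\nu}$ interact correctly with the right-invariance of $d$ --- which they do, precisely because $d(xz,yz) = d(x,y)$. With $\Phi_{\nu}f \in \mathrm{Lip}_{1}^{1}(G,d)$ established, the reduction of the first paragraph completes the proof.
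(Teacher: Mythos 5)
Your proof is correct, and it shares with the paper the same overall reduction: both arguments come down to the single fact that $\Phi_{\nu}$ maps $\mathrm{Lip}_{1}^{1}(G,d)$ into itself for every $\nu \in \mathrm{M}(G)$, after which $\mathrm{p}_{d}(\mu\nu) = \sup_{f} \lvert \mu(\Phi_{\nu}f)\rvert \leq \mathrm{p}_{d}(\mu)$ is immediate. Where you differ is in how that invariance is established. The paper observes that $\mathrm{Lip}_{1}^{1}(G,d)$ is right-translation closed, convex, and pointwise closed, and then invokes Corollary~\ref{corollary:introverted}, i.e.\ the introversion machinery; behind that corollary sits Lemma~\ref{lemma:introverted}(1), which identifies $\Phi_{\nu}f$ as an element of $\convc\{ f \circ \rho_{g} \mid g \in G\}$ via the weak-${}^{\ast}$ density of convex combinations of Dirac functionals in $\mathrm{M}(G)$. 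You instead verify the three defining properties of $\mathrm{Lip}_{1}^{1}(G,d)$ for $\Phi_{\nu}f$ by hand: real-valuedness from self-adjointness of the state, the bound $\lVert \Phi_{\nu}f \rVert_{\infty} \leq 1$ from $\lVert \nu \rVert = \nu(\mathbf{1}) = 1$, and the Lipschitz estimate from $\lvert \nu\bigl((f\circ\lambda_{x}) - (f\circ\lambda_{y})\bigr)\rvert \leq \sup_{z} \lvert f(xz)-f(yz)\rvert \leq \sup_{z} d(xz,yz) = d(x,y)$. Note that your use of right-invariance is exactly the computation that, in the paper's route, shows each right translate $f \circ \rho_{g}$ lies in $\mathrm{Lip}_{1}^{1}(G,d)$; you simply bypass the passage to the closed convex hull. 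Your version is more elementary and self-contained (no appeal to the density of Dirac combinations in $\mathrm{M}(G)$), while the paper's version records the introversion of $\mathrm{Lip}_{1}^{1}(G,d)$ as a reusable abstract fact, a pattern it exploits again for the sets $\mathrm{L}_{x}(G,X)$ in Section~\ref{section:liouville}.
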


\begin{proof} Evidently, $\mathrm{Lip}_{1}^{1}(G,d)$ is right-translation closed, convex, and closed with respect to the topology of pointwise convergence, hence introverted by Corollary~\ref{corollary:introverted}. Consequently, if $\mu \in \mathrm{RUCB}(G)^{\ast}$ and $\nu \in \mathrm{M}(G)$, then \begin{align*}
	\mathrm{p}_{d}(\mu \nu) \, &= \, \sup \left\{ \vert (\mu \nu) (f) \vert \left\vert \, f \in \mathrm{Lip}_{1}^{1}(G,d) \right\} \right. \, = \, \sup \left\{ \vert \mu (\Phi_{\nu}f) \vert \left\vert \, f \in \mathrm{Lip}_{1}^{1}(G,d) \right\} \right. \\
	&\leq \, \sup \left\{ \vert \mu (f) \vert \left\vert \, f \in \mathrm{Lip}_{1}^{1}(G,d) \right\} \right. \, = \, \mathrm{p}_{d}(\mu) . \qedhere
\end{align*} \end{proof}

The next lemma will allow us to swap quantifiers when testing amenability.

\begin{lem}\label{lemma:amenability} Let $G$ be a topological group, let $\Sigma$ be a closed subsemigroup of $\mathrm{M}(G)$, and let~$H \subseteq \mathrm{RUCB}(G)$. Suppose that $\Phi_{\mu}(H) \subseteq H$ for all $\mu \in \Sigma$. The following are equivalent. \begin{enumerate}
		\item[$(1)$] $\exists \mu \in \Sigma \ \forall f \in H \ \forall g \in G \colon \ \mu (f \circ \lambda_{g}) = \mu (f)$.
		\item[$(2)$] $\forall f \in H \ \exists \mu \in \Sigma \ \forall g \in G \colon \ \mu (f \circ \lambda_{g}) = \mu (f)$.
\end{enumerate} \end{lem}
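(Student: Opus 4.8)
The plan is to recast both conditions in terms of the sets
\[
  \Sigma_{f} \defeq \{ \mu \in \Sigma \mid \Phi_{\mu}f \text{ is constant} \} \qquad (f \in H) .
\]
Since $(\Phi_{\mu}f)(g) = \mu(f \circ \lambda_{g})$ for every $g \in G$ while the value of $\Phi_{\mu}f$ at the neutral element is $\mu(f)$, the requirement that $\mu(f \circ \lambda_{g}) = \mu(f)$ hold for all $g \in G$ is exactly the statement that $\Phi_{\mu}f = \mu(f)\mathbf{1}$ is constant. Hence $(1)$ asserts $\bigcap_{f \in H} \Sigma_{f} \neq \emptyset$, whereas $(2)$ asserts $\Sigma_{f} \neq \emptyset$ for each $f \in H$. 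The implication $(1) \Rightarrow (2)$ is then immediate, and the whole content lies in deducing the single common $\mu$ of $(1)$ from $(2)$ via a compactness argument, for which I would verify three properties of the sets $\Sigma_{f}$.

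First I would record that each $\Sigma_{f}$ is weak-${}^{\ast}$ closed in $\Sigma$, hence compact: by Lemma~\ref{lemma:introverted} the map $\mu \mapsto \Phi_{\mu}f$ is continuous from the weak-${}^{\ast}$ topology to the topology of pointwise convergence on $\mathrm{RUCB}(G)$, and the set of constant functions is closed in the latter, so $\Sigma_{f}$ is the intersection of $\Sigma$ with the preimage of a closed set. Second, I would show that every nonempty $\Sigma_{f}$ is a \emph{left ideal} of $\Sigma$: if $\mu \in \Sigma_{f}$ and $\nu \in \Sigma$, then $\Phi_{\nu\mu}f = \Phi_{\nu}(\Phi_{\mu}f) = \Phi_{\nu}(\mu(f)\mathbf{1}) = \mu(f)\mathbf{1}$, using $\Phi_{\nu\mu} = \Phi_{\nu} \circ \Phi_{\mu}$ from Lemma~\ref{lemma:convolution}(4) together with the unitality of $\Phi_{\nu}$ from Lemma~\ref{lemma:convolution}(3); thus $\nu\mu \in \Sigma_{f}$, i.e.\ $\Sigma \Sigma_{f} \subseteq \Sigma_{f}$.

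The crux is to establish the finite intersection property of the family $\{ \Sigma_{f} \}_{f \in H}$, which I would do by induction on the number $n$ of functions, the case $n = 1$ being exactly $(2)$. For the inductive step, given $\mu \in \bigcap_{i < n} \Sigma_{f_{i}}$ (nonempty by the inductive hypothesis), the standing assumption $\Phi_{\mu}(H) \subseteq H$ guarantees that $g \defeq \Phi_{\mu}f_{n}$ again lies in $H$; applying $(2)$ to $g$ produces $\nu \in \Sigma$ with $\Phi_{\nu}g$ constant, whence $\Phi_{\nu\mu}f_{n} = \Phi_{\nu}(\Phi_{\mu}f_{n}) = \Phi_{\nu}g$ is constant and $\nu\mu \in \Sigma_{f_{n}}$. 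Simultaneously, the left-ideal property forces $\nu\mu \in \Sigma \Sigma_{f_{i}} \subseteq \Sigma_{f_{i}}$ for every $i < n$, so $\nu\mu \in \bigcap_{i \leq n} \Sigma_{f_{i}}$. Compactness of $\Sigma$ then upgrades this finite intersection property to $\bigcap_{f \in H} \Sigma_{f} \neq \emptyset$, yielding a single $\mu$ as required. I expect the main obstacle to be precisely this inductive step: the naive idea of multiplying together witnesses $\mu_{i} \in \Sigma_{f_{i}}$ fails, since a product need not lie in each factor's set, and the resolution is the interplay between the left-ideal property (which lets arbitrary left multiplication preserve the invariances already achieved) and the introversion hypothesis $\Phi_{\mu}(H) \subseteq H$ (which lets one absorb $\mu$ into the new function $g = \Phi_{\mu}f_{n}$ before invoking $(2)$).
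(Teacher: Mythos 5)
Your proposal is correct and follows essentially the same route as the paper: both establish the finite intersection property by multiplying witnesses, using the hypothesis $\Phi_{\mu}(H) \subseteq H$ to apply condition $(2)$ to the already-transformed function $\Phi_{\mu}f_{n}$, and using unitality of the operators $\Phi_{\nu}$ (your left-ideal observation) to see that left multiplication preserves the constancy already achieved, before concluding by compactness of $\Sigma$. The only differences are cosmetic — the paper writes the product of witnesses out explicitly as a downward recursion $\mu = \mu_{1}\cdots\mu_{n}$ rather than phrasing it as an induction with the sets $\Sigma_{f}$.
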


\begin{proof} Of course, the former implies the latter. Since $\Sigma$ is a compact Hausdorff space, in order to prove the converse implication it suffices to show that \begin{displaymath}
	\forall F \subseteq H \text{ finite} \ \exists \mu \in \Sigma \ \forall g \in G \colon \quad \mu (f \circ \lambda_{g}) \, = \, \mu (f) .
\end{displaymath} Consider any finite subset $F = \{ f_{1},\ldots,f_{n} \} \subseteq H$. By assumption, there is $\mu_{n} \in \Sigma$ such that $\mu_{n}(f_{n} \circ \lambda_{g}) = \mu_{n}(f_{n})$ for all $g \in G$, i.e., the function $\Phi_{\mu_{n}}f_{n}$ is constant. As $\Phi_{\mu}(H) \subseteq H$ for all $\mu \in \Sigma$, we may continue by recursion: for each $i \in \{ 1,\ldots,n-1 \}$, there exists $\mu_{i} \in \Sigma$ such that $\mu_{i}((\Phi_{\mu_{i+1}\cdots\mu_{n}}f_{i}) \circ \lambda_{g}) = \mu_{i}(\Phi_{\mu_{i+1}\cdots\mu_{n}}f_{i})$ for every $g \in G$, that is, $\Phi_{\mu_{i}}(\Phi_{\mu_{i+1}\cdots\mu_{n}}f_{i}) = \Phi_{\mu_{i}\cdots \mu_{n}}f_{i}$ is constant. Finally, let us define $\mu \defeq \mu_{1}\cdots \mu_{n} \in \Sigma$. For each $i \in \{ 1,\ldots,n \}$, since the function $\Phi_{\mu_{i}\cdots \mu_{n}}f_{i}$ is constant, it follows that \begin{displaymath}
	\Phi_{\mu}(f_{i}) \, = \,  \Phi_{\mu_{1}\cdots\mu_{i-1}}(\Phi_{\mu_{i}\cdots \mu_{n}}f_{i}) \, = \, \Phi_{\mu_{i}\cdots \mu_{n}}f_{i} ,
\end{displaymath} wherefore $\Phi_{\mu}(f_{i})$ is constant, too. Consequently, $\mu (f_{i} \circ \lambda_{g}) = (\Phi_{\mu}f_{i})(g) = (\Phi_{\mu}f_{i})(e) = \mu (f_{i})$ for all $g \in G$ and $i \in \{ 1,\ldots,n \}$, as desired. \end{proof}

The subsequent corollary is a well-known result due to Granirer and Lau~\cite{GranirerLau}.

\begin{cor}[\cite{GranirerLau}]\label{corollary:lau} Let $G$ be a topological group. The following hold. \begin{enumerate}
	\item[$(1)$]  $G$ is amenable if and only if \begin{displaymath}
	\forall f \in \mathrm{RUCB}(G) \ \exists \mu \in \mathrm{M}(G) \ \forall g \in G \colon \quad \mu (f \circ \lambda_{g}) \, = \, \mu (f) .
\end{displaymath}
	\item[$(2)$]  $G$ is extremely amenable if and only if \begin{displaymath}
		\forall f \in \mathrm{RUCB}(G) \ \exists \mu \in \mathrm{S}(G) \ \forall g \in G \colon \quad \mu (f \circ \lambda_{g}) \, = \, \mu (f) .
\end{displaymath}
\end{enumerate} \end{cor}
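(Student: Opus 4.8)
The plan is to deduce both equivalences directly from the quantifier-swapping Lemma~\ref{lemma:amenability}, applied with $H = \mathrm{RUCB}(G)$ and with $\Sigma = \mathrm{M}(G)$ for part~$(1)$ and $\Sigma = \mathrm{S}(G)$ for part~$(2)$. The first step is to recast the fixed-point definition of (extreme) amenability into the existence form appearing as condition~$(1)$ of that lemma. By definition, $G$ is amenable precisely when $\mathrm{M}(G)$ admits a $G$-fixed point, i.e.\ some $\mu \in \mathrm{M}(G)$ with $g\mu = \mu$ for every $g \in G$. Unwinding the action via $(g\mu)(f) = \mu(f \circ \lambda_{g})$, this says exactly that there exists $\mu \in \mathrm{M}(G)$ with $\mu(f \circ \lambda_{g}) = \mu(f)$ for all $f \in \mathrm{RUCB}(G)$ and all $g \in G$, which is condition~$(1)$ of Lemma~\ref{lemma:amenability} for $\Sigma = \mathrm{M}(G)$ and $H = \mathrm{RUCB}(G)$. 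The same unwinding, with $\mathrm{S}(G)$ in place of $\mathrm{M}(G)$, handles extreme amenability.

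Next I would check that the hypotheses of Lemma~\ref{lemma:amenability} are met in each case. Both $\mathrm{M}(G)$ and $\mathrm{S}(G)$ are closed subsemigroups of $\mathrm{M}(G)$ --- this was recorded in the discussion following Lemma~\ref{lemma:convolution}, since $\mathrm{M}(G)$ is weak-${}^{\ast}$ compact and both sets are subsemigroups by Lemma~\ref{lemma:convolution}(4), with $\mathrm{S}(G)$ moreover a weak-${}^{\ast}$ closed subspace of $\mathrm{M}(G)$. Taking $H = \mathrm{RUCB}(G)$, the invariance requirement $\Phi_{\mu}(H) \subseteq H$ holds for every $\mu \in \mathrm{M}(G)$ by Lemma~\ref{lemma:convolution}(3), which asserts that $\Phi_{\mu}$ is a bounded linear operator on $\mathrm{RUCB}(G)$; in particular it holds for every $\mu$ in either subsemigroup.

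Finally, applying Lemma~\ref{lemma:amenability} yields the equivalence of its conditions~$(1)$ and~$(2)$. For $\Sigma = \mathrm{M}(G)$, condition~$(2)$ reads $\forall f \in \mathrm{RUCB}(G) \ \exists \mu \in \mathrm{M}(G) \ \forall g \in G \colon \mu(f \circ \lambda_{g}) = \mu(f)$, which is precisely the right-hand side of part~$(1)$; combined with the reformulation above, this establishes~$(1)$. The case $\Sigma = \mathrm{S}(G)$ gives~$(2)$ verbatim. I do not anticipate any genuine obstacle here: the entire argument reduces to the fixed-point reformulation of (extreme) amenability together with the quantifier swap packaged in Lemma~\ref{lemma:amenability}, whose own proof --- the recursive construction of a product $\mu_{1}\cdots\mu_{n}$ simultaneously trivialising finitely many functions, followed by a compactness argument in $\Sigma$ --- is where the substantive work was already carried out.
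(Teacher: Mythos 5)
Your proposal is correct and follows exactly the paper's own route: the paper also derives both parts as immediate consequences of Lemma~\ref{lemma:amenability} with $H = \mathrm{RUCB}(G)$ and $\Sigma = \mathrm{M}(G)$ (resp.\ $\Sigma = \mathrm{S}(G)$). Your additional verification of the hypotheses and the unwinding of the fixed-point definitions are just the details the paper leaves implicit.
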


\begin{proof} Both statements are immediate consequences of Lemma~\ref{lemma:amenability}: the first follows for $\Sigma = \mathrm{M}(G)$ and $H = \mathrm{RUCB}(G)$, the second for $\Sigma = \mathrm{S}(G)$ and $H = \mathrm{RUCB}(G)$. \end{proof}

As shown by Moore~\cite{moore}, for a discrete group $G$, one may replace $\mathrm{RUCB}(G) = \ell^{\infty}(G)$ in Corollary~\ref{corollary:lau}(1) by the set of characteristic functions of subsets of $G$. A topological version of Moore's result, in terms of two-element uniform coverings, has been established in~\cite{SchneiderThom17}. For our present purposes, however, we will need a different refinement of Corollary~\ref{corollary:lau}(1).

\begin{cor}\label{corollary:amenability.test} Let $G$ be a topological group. Assume that $H \subseteq \mathrm{RUCB}(G)$ is introverted and generates a dense linear subspace of $\mathrm{RUCB}(G)$. Then $G$ is amenable if and only if \begin{displaymath}
	\forall f \in H \ \exists \mu \in \mathrm{M}(G) \ \forall g \in G \colon \quad \mu (f \circ \lambda_{g}) \, = \, \mu (f) .
\end{displaymath} \end{cor}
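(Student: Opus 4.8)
The plan is to treat the two implications asymmetrically: one is a triviality built into the definition of amenability, while the other is essentially a repackaging of Lemma~\ref{lemma:amenability} followed by a short density argument. Throughout I will use that $(g\mu)(f) = \mu(f \circ \lambda_{g})$, so that invariance of $\mu$ on a function $f$ is the same as $(g\mu)(f) = \mu(f)$ for all $g$.

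For the easy direction, suppose $G$ is amenable. By definition $\mathrm{M}(G)$ admits a $G$-fixed point $\mu$, and $g\mu = \mu$ gives $\mu(f \circ \lambda_{g}) = (g\mu)(f) = \mu(f)$ for every $f \in \mathrm{RUCB}(G)$ and every $g \in G$. Restricting to $f \in H \subseteq \mathrm{RUCB}(G)$ yields the stated condition at once; this direction uses neither introversion nor density.

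For the converse I would invoke Lemma~\ref{lemma:amenability} with $\Sigma \defeq \mathrm{M}(G)$. The hypotheses are immediate: $\mathrm{M}(G)$ is trivially a closed subsemigroup of itself, being a subsemigroup by Lemma~\ref{lemma:convolution}(4), and the requirement $\Phi_{\mu}(H) \subseteq H$ for all $\mu \in \mathrm{M}(G)$ is exactly the assumption that $H$ is introverted. The condition we are given is precisely statement~(2) of that lemma, so the lemma delivers statement~(1): there is a \emph{single} $\mu \in \mathrm{M}(G)$ with $\mu(f \circ \lambda_{g}) = \mu(f)$ for all $f \in H$ and all $g \in G$. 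This quantifier swap is the crux of the argument, and it is exactly where introversion is needed, since the recursive construction inside Lemma~\ref{lemma:amenability} repeatedly applies operators $\Phi_{\mu_{i}}$ to elements of $H$ and must stay within $H$.

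It then remains to upgrade invariance on $H$ to genuine $G$-invariance. Rewriting the identity as $(g\mu)(f) = \mu(f)$ for all $f \in H$, we see that the two bounded functionals $g\mu$ and $\mu$ agree on $H$, hence on the linear span of $H$ by linearity, and hence on its $\Vert \cdot \Vert_{\infty}$-closure by continuity of elements of $\mathrm{RUCB}(G)^{\ast}$. Since $H$ generates a dense linear subspace of $\mathrm{RUCB}(G)$, this closure is all of $\mathrm{RUCB}(G)$, so $g\mu = \mu$ for every $g \in G$. Thus $\mu$ is a $G$-fixed point of $\mathrm{M}(G)$, which by definition means $G$ is amenable. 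The only point requiring any care is this density-plus-continuity step, but it is entirely routine; the substantive content is carried by Lemma~\ref{lemma:amenability}, of which this corollary is really just the specialization to $\Sigma = \mathrm{M}(G)$ combined with the density hypothesis, much as Corollary~\ref{corollary:lau}(1) is the specialization to $H = \mathrm{RUCB}(G)$.
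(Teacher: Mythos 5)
Your proposal is correct and follows essentially the same route as the paper: the nontrivial direction is obtained by applying Lemma~\ref{lemma:amenability} with $\Sigma = \mathrm{M}(G)$ (introversion supplying the hypothesis $\Phi_{\mu}(H) \subseteq H$) to get a single invariant mean on $H$, and then extending invariance to all of $\mathrm{RUCB}(G)$ by linearity, continuity, and density. The paper's proof is just a terser version of the same argument.
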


\begin{proof} Evidently, the former implies the latter. Let us prove the converse implication. By Lemma~\ref{lemma:amenability}, our hypothesis entails that there exists $\mu \in \mathrm{M}(G)$ such that $\mu (f \circ \lambda_{g}) = \mu (f)$ for all $f \in H$ and $g \in G$. Since $\mu$ is a continuous linear form and $H$ generates a dense linear subspace of $\mathrm{RUCB}(G)$, it follows that $\mu (f \circ \lambda_{g}) = \mu (f)$ for all $f \in \mathrm{RUCB}(G)$ and $g \in G$. \end{proof}

\section{Poisson boundary and Furstenberg's conjecture}\label{section:poisson}

In this section we consider harmonic functions and Poisson boundaries for random walks on topological groups and prove a general topological version of Furstenberg's conjecture.

\begin{definition}\label{definition:harmonicity} Let $G$ be a topological group and let $\mu \in \mathrm{M}(G)$. The elements of \begin{displaymath}
	\mathrm{H}_{\mu}(G) \, \defeq \, \{ f \in \mathrm{RUCB}(G) \mid f = \Phi_{\mu}f \}
\end{displaymath} will be called \emph{$\mu$-harmonic} functions on $G$. \end{definition}

Of course, for any topological group $G$ and any $\mu \in \mathrm{M}(G)$, the collection $\mathrm{H}_{\mu}(G)$ constitutes a $\Vert \cdot \Vert_{\infty}$-closed linear subspace of $\mathrm{RUCB}(G)$ containing the set of constant functions and being closed under complex conjugation. The subsequent Proposition~\ref{proposition:poisson.algebra} and Proposition~\ref{proposition:poisson.formula} are simple variations of results due to Prunaru~\cite{prunaru} extending earlier work Furstenberg~\cite{furstenberg63a} and many others~\cite{zimmer,paterson,dokken,DokkenEllis,willis}.

\begin{prop}\label{proposition:poisson.algebra} Let $G$ be a topological group and let $\mu \in \Lambda (\mathrm{M}(G))$. Then $\mathrm{H}_{\mu}(G)$ constitutes a commutative unital $C^{\ast}$-algebra with respect to the multiplication given by \begin{displaymath}
	\left(f_{1} \cdot_{\mu} f_{2}\right)\! (g) \, \defeq \, \lim\nolimits_{n \to \infty} \Phi_{\mu^{n}}(f_{1}f_{2}) (g) \qquad \left(f_{1},f_{2} \in \mathrm{H}_{\mu}(G), \, g \in G\right) .
\end{displaymath} \end{prop}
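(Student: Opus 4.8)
The plan is to combine a Schwarz-type monotonicity argument (which supplies the \emph{existence} of the defining limit) with the construction of a positive unital idempotent onto $\mathrm{H}_{\mu}(G)$ (which supplies the \emph{algebra structure}), and then to invoke the Choi--Effros theorem. First I would establish that the limit exists. Since $\mu$ is positive and unital, $\Phi_{\mu}$ is a positive unital operator on the commutative $C^{*}$-algebra $\mathrm{RUCB}(G)$ by Lemma~\ref{lemma:convolution}(3); evaluated at a point $g$, the functional $f \mapsto \mu(f \circ \lambda_{g})$ is a state, so pointwise Cauchy--Schwarz gives $(\Phi_{\mu}f)^{2} \leq \Phi_{\mu}(f^{2})$ for every real $f$. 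For real $f \in \mathrm{H}_{\mu}(G)$ this reads $f^{2} = (\Phi_{\mu}f)^{2} \leq \Phi_{\mu}(f^{2})$, and applying the positive operators $\Phi_{\mu^{n}} = \Phi_{\mu}^{n}$ (Lemma~\ref{lemma:convolution}(4)) shows that $(\Phi_{\mu^{n}}(f^{2}))_{n}$ is pointwise non-decreasing and bounded by $\Vert f \Vert_{\infty}^{2}$, hence pointwise convergent. As $\mathrm{H}_{\mu}(G)$ is a conjugation-closed linear subspace, polarization reduces the general case to such squares, so $\lim_{n} \Phi_{\mu^{n}}(f_{1}f_{2})$ exists pointwise for all $f_{1},f_{2} \in \mathrm{H}_{\mu}(G)$ and $\cdot_{\mu}$ is well defined.

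Next I would produce the projection. Consider the Cesàro averages $A_{N} \defeq \tfrac{1}{N}\sum_{n=1}^{N} \mu^{n}$, which lie in the convex set $\Lambda(\mathrm{M}(G))$, and let $\omega$ be a weak-${}^{*}$ cluster point of $(A_{N})_{N}$ in the compact space $\mathrm{M}(G)$. Because $\mu \in \Lambda(\mathrm{M}(G))$, left multiplication by $\mu$ is weak-${}^{*}$ continuous; together with $\mu A_{N} - A_{N} = \tfrac{1}{N}(\mu^{N+1} - \mu) \to 0$ this forces $\mu\omega = \omega$, whence $\mu^{n}\omega = \omega$ for all $n$ and therefore $A_{N}\omega = \omega$ for all $N$; since right multiplication by $\omega$ is continuous (right topological semigroup), passing to the limit gives $\omega^{2} = \omega$. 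Thus $E \defeq \Phi_{\omega}$ is a positive, unital, idempotent endomorphism of $\mathrm{RUCB}(G)$ by Lemma~\ref{lemma:convolution}(3),(4). Using $\mu\omega = \omega$ one checks that its range (its fixed-point set) is exactly $\mathrm{H}_{\mu}(G)$: a fixed point $g$ of $E$ satisfies $\Phi_{\mu}g = \Phi_{\mu}\Phi_{\omega}g = \Phi_{\mu\omega}g = \Phi_{\omega}g = g$, and conversely every $\mu$-harmonic $f$ is fixed by each $\Phi_{\mu^{n}}$, hence by $E$, via the weak-${}^{*}$-to-pointwise continuity of $\Psi_{f}$ from Lemma~\ref{lemma:introverted}.

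Then, taking Cesàro averages of the convergent sequence of the first paragraph and passing to $\omega$ along the cluster subnet, the same continuity of $\nu \mapsto \Phi_{\nu}(f_{1}f_{2})$ (Lemma~\ref{lemma:introverted}) identifies $f_{1} \cdot_{\mu} f_{2} = \Phi_{\omega}(f_{1}f_{2}) = E(f_{1}f_{2})$; in particular $f_{1}\cdot_{\mu}f_{2} \in \mathrm{H}_{\mu}(G)$. The $C^{*}$-structure is now immediate from Choi--Effros: as a positive map between commutative $C^{*}$-algebras, $E$ is automatically completely positive, so it is a unital completely positive projection, and its range $\mathrm{H}_{\mu}(G)$, equipped with the product $f_{1}\circ f_{2} \defeq E(f_{1}f_{2}) = f_{1}\cdot_{\mu}f_{2}$, the inherited involution (complex conjugation) and the supremum norm, is a $C^{*}$-algebra; it is unital with unit $\mathbf{1} = E(\mathbf{1})$ and commutative because $\mathrm{RUCB}(G)$ is.

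I expect the main obstacle to be the second paragraph. In the merely right topological semigroup $\mathrm{M}(G)$ multiplication is not jointly continuous, so the existence of an idempotent $\omega$ that both absorbs $\mu$ and computes the product hinges delicately on the hypothesis $\mu \in \Lambda(\mathrm{M}(G))$ — this is precisely where the topological-centre assumption (and the subsemigroup-closure phenomenon of Lemma~\ref{lemma:semigroup.closure}) is used; one must be careful to invoke left-continuity only through $\mu$ and right-continuity for everything else. By contrast, the Schwarz monotonicity and the appeal to Choi--Effros are comparatively routine.
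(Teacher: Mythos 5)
Your argument is correct, and it reaches the conclusion by a genuinely different route from the paper's. The existence of the limit is handled the same way in both proofs (pointwise Cauchy--Schwarz for the state $g\mu$, monotonicity of $\Phi_{\mu^{n}}(f^{2})$ for harmonic $f$, then polarization). The divergence is in how the algebra structure is obtained. The paper works only with the partially defined map $\pi_{\mu}\colon \mathrm{C}_{\mu}(G)\to\mathrm{H}_{\mu}(G)$, uses the $\omega$-limit set $A=\bigcap_{m}\overline{\{\mu^{n}\mid n\geq m\}}$ (rather than an idempotent) to show $\pi_{\mu}$ lands in $\mathrm{H}_{\mu}(G)$, and then builds the $C^{\ast}$-structure by hand: an induction on products of harmonic functions, the closed ideal $\mathrm{J}_{\mu}(G)$ generated by the defects $h_{1}h_{2}-\pi_{\mu}(h_{1}h_{2})$, and the identification of $\mathrm{H}_{\mu}(G)$ with the quotient $\mathrm{A}_{\mu}(G)/\bigl(\mathrm{J}_{\mu}(G)\cap\mathrm{A}_{\mu}(G)\bigr)$. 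You instead extract from the Ces\`aro averages a genuine idempotent $\omega=\omega^{2}\in\mathrm{M}(G)$ with $\mu\omega=\omega$ --- your use of left-continuity only through $\mu$ and right-continuity for the rest is exactly right, and the linearity of the convolution in its left argument makes $A_{N}\omega=\omega$ legitimate --- so that $E=\Phi_{\omega}$ is a globally defined unital positive projection of $\mathrm{RUCB}(G)$ onto $\mathrm{H}_{\mu}(G)$, and the Choi--Effros theorem then delivers the $C^{\ast}$-structure in one stroke once you have identified $f_{1}\cdot_{\mu}f_{2}$ with $E(f_{1}f_{2})$ via Lemma~\ref{lemma:introverted}. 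Both proofs spend the hypothesis $\mu\in\Lambda(\mathrm{M}(G))$ at the same place (to get $\mu\omega=\omega$, respectively $\mu A\subseteq A$). What the paper's route buys is self-containment --- no appeal to Choi--Effros --- together with an explicit ideal-theoretic description of the kernel; what yours buys is brevity and the extra structural information that the Poisson projection extends to a positive unital idempotent on all of $\mathrm{RUCB}(G)$, not merely on the space $\mathrm{C}_{\mu}(G)$ of pointwise-convergent orbits.
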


\begin{proof} The following argument is due to Prunaru~\cite{prunaru} and will be included only for the sake of convenience. By Lemma~\ref{lemma:convolution}, $(\Phi_{\mu^{n}})_{n \in \mathbb{N}}$ is a sequence of linear contractions, which readily implies that the linear subspace \begin{displaymath}
	\mathrm{C}_{\mu}(G) \, \defeq \, \left\{ f \in \mathrm{RUCB}(G) \left\vert \, (\Phi_{\mu^{n}}f)_{n \in \mathbb{N}} \text{ pointwise convergent} \right\} \right.
\end{displaymath} is $\Vert \cdot \Vert_{\infty}$-closed in $\mathrm{RUCB}(G)$. To prove this, let $f \in \mathrm{RUCB}(G)$ and $(f_{k})_{k \in \mathbb{N}} \in \mathrm{C}_{\mu}(G)^{\mathbb{N}}$ such that $\Vert f -f_{k} \Vert_{\infty} \longrightarrow 0$ as $k \to \infty$. Let $g \in G$. For every $\epsilon > 0$, we find $k \in \mathbb{N}$ with $\Vert f - f_{k} \Vert_{\infty} \leq \tfrac{\epsilon}{3}$ and then $\ell \in \mathbb{N}$ with $\sup_{m,n \in \mathbb{N}_{\geq \ell}} \vert (\Phi_{\mu^{m}} f_{k})(g) - (\Phi_{\mu^{n}} f_{k})(g) \vert \leq \tfrac{\epsilon}{3}$, which entails that \begin{displaymath}
	\vert (\Phi_{\mu^{m}} f)(g) - (\Phi_{\mu^{n}} f)(g) \vert \, \leq \, \Vert f -f_{k} \Vert_{\infty} + \vert (\Phi_{\mu^{m}} f_{k})(g) - (\Phi_{\mu^{n}} f_{k})(g) \vert + \Vert f_{k} - f \Vert_{\infty} \, \leq \, \epsilon 
\end{displaymath} for all $m,n \in \mathbb{N}_{\geq \ell}$. This shows that $((\Phi_{\mu^{n}}f)(g))_{n \in \mathbb{N}} \in \mathbb{C}^{\mathbb{N}}$ is a Cauchy sequence and therefore convergent in $\mathbb{C}$. Thus, $f \in \mathrm{C}_{\mu}(G)$ indeed. What is more, for each $n \in \mathbb{N}$, the operator $\Phi_{\mu^{n}}$ commutes with complex conjugation, whence $\mathrm{C}_{\mu}(G)$ is closed with respect to the latter, too. Furthermore, $\mathrm{H}_{\mu}(G) \subseteq \mathrm{C}_{\mu}(G)$. Thanks to Lemma~\ref{lemma:introverted}, for each $f \in \mathrm{C}_{\mu}(G)$, the pointwise limit $\pi_{\mu}(f) \defeq \lim_{n \to \infty} \Phi_{\mu^{n}}f$ belongs to $\mathrm{RUCB}(G)$. Once more, since $(\Phi_{\mu^{n}})_{n \in \mathbb{N}}$ is a sequence of linear contractions, $\pi_{\mu} \colon \mathrm{C}_{\mu}(G) \to \mathrm{RUCB}(G)$ is a linear contraction as well. Evidently, if~$f \in \mathrm{H}_{\mu}(G)$, then $\pi_{\mu}(f) = f$. We now claim that $\pi_{\mu}(\mathrm{C}_{\mu}(G)) \subseteq \mathrm{H}_{\mu}(G)$. In order to prove this inclusion, let us consider the weak-${}^{\ast}$ closed set \begin{displaymath}
	A \, \defeq \, \bigcap\nolimits_{m \in \mathbb{N}} \overline{\{ \mu^{n} \mid n \in \mathbb{N}_{\geq m} \}} \, \subseteq \, \mathrm{M}(G) ,
\end{displaymath} which is non-empty by weak-${}^{\ast}$ compactness of $\mathrm{M}(G)$. As $\mu \in \Lambda (\mathrm{M}(G))$, it follows that \begin{displaymath}
	\mu A \, \subseteq \, \bigcap\nolimits_{m \in \mathbb{N}} \overline{\{ \mu^{n} \mid n \in \mathbb{N}_{\geq m+1} \}} \, = \, A .
\end{displaymath} Now, let $f \in \mathrm{C}_{\mu}(G)$. We wish to show that $\Phi_{\mu}(\pi_{\mu}(f)) = \pi_{\mu}(f)$. Note that, for every $g \in G$, continuity of the map $\mathrm{M}(G) \to \mathbb{R}, \, \nu \mapsto \nu(f \circ \lambda_{g})$ implies that \begin{align*}
	\{ (\Phi_{\nu}f)(g) \mid \nu \in A \} \, &= \, \{ \nu (f \circ \lambda_{g}) \mid \nu \in A \} \, \subseteq \, \bigcap\nolimits_{m \in \mathbb{N}} \overline{\{ \mu^{n}(f \circ \lambda_{g}) \mid n \in \mathbb{N}_{\geq m} \}} \\
	& = \, \bigcap\nolimits_{m \in \mathbb{N}} \overline{\{ (\Phi_{\mu^{n}}f)(g) \mid n \in \mathbb{N}_{\geq m} \}} \, = \, \{ \pi_{\mu}(f)(g) \} .
\end{align*} That is, $\Phi_{\nu}f = \pi_{\mu}(f)$ for all $\nu \in A$. Hence, picking any $\nu \in A$ and using the fact that $\mu \nu \in A$, we conclude that \begin{displaymath}
	\Phi_{\mu}(\pi_{\mu}(f)) \, = \, \Phi_{\mu}(\Phi_{\nu}f) \, \stackrel{\ref{lemma:convolution}(4)}{=} \, \Phi_{\mu \nu}(f) \, = \, \pi_{\mu}(f) .
\end{displaymath} So, $\pi_{\mu}(f) \in \mathrm{H}_{\mu}(G)$ as desired. It follows that $\pi_{\mu}$ is idempotent and that $\pi_{\mu}(\mathrm{C}_{\mu}(G)) = \mathrm{H}_{\mu}(G)$.

\textit{Claim 1.} If $h \in \mathrm{H}_{\mu}(G)$, then $\vert h \vert^{2} \in \mathrm{C}_{\mu}(G)$ and \begin{displaymath}
	\forall f \in \mathrm{RUCB}(G) \colon \quad \left(\vert h \vert^{2} - \pi_{\mu}\! \left(\vert h \vert^{2}\right)\right) \! f \, \in \, \ker (\pi_{\mu})
\end{displaymath}

\textit{Proof of Claim~1.} Let $h \in \mathrm{H}_{\mu}(G)$. For each $g \in G$, since $g\mu$ is a positive linear functional on the $C^{\ast}$-algebra $\mathrm{RUCB}(G)$, the Cauchy--Schwarz inequality asserts that \begin{displaymath}
	\vert h \vert^{2}(g) \, = \, \vert {\Phi_{\mu}h} \vert^{2}(g) \, = \, \vert (g\mu) (\overline{\mathbf{1}}h)\vert^{2} \, \leq \, (g\mu)\!\left(\vert \mathbf{1} \vert^{2}\right) \cdot (g\mu)\!\left(\vert h \vert^{2}\right) \, = \, \Phi_{\mu}\!\left(\vert h \vert^{2}\right)\!(g) .
\end{displaymath} By positivity of $\Phi_{\mu}$, this entails that the $\Vert \cdot \Vert_{\infty}$-bounded sequence $\left(\Phi_{\mu^{n}}\!\left(\vert h \vert^{2}\right)\right)_{n \in \mathbb{N}}$ is \mbox{increasing}, thus pointwise convergent to the function $\sup_{n \in \mathbb{N}} \Phi_{\mu^{n}}\!\left(\vert h \vert^{2}\right)$. This shows that $\vert h \vert^{2} \in \mathrm{C}_{\mu}(G)$ and $\pi_{\mu}\!\left(\vert h \vert^{2}\right) = \sup_{n \in \mathbb{N}} \Phi_{\mu^{n}}\!\left(\vert h \vert^{2}\right)$. In particular, $\pi_{\mu}\!\left( \vert h \vert^{2} \right) - \vert h \vert^{2} \geq \mathbf{0}$. Note that \begin{displaymath}
	\Phi_{\mu^{n}}\!\left( \pi_{\mu}\!\left( \vert h \vert^{2} \right) - \vert h \vert^{2} \right) \! (g) \ \longrightarrow \ \pi_{\mu} \!\left( \pi_{\mu}\!\left( \vert h \vert^{2} \right) - \vert h \vert^{2} \right) \! (g) \, = \, 0 \qquad (n \longrightarrow \infty) 
\end{displaymath} for all $g \in G$. For every $f \in \mathrm{RUCB}(G)$ with $f \geq \mathbf{0}$, since \begin{displaymath}
	\left\lvert \Phi_{\mu^{n}}\!\left( \left( \vert h \vert^{2} - \pi_{\mu}\!\left( \vert h \vert^{2} \right) \right) \! f \right) \right\rvert \, \leq \, \Vert f \Vert_{\infty} \Phi_{\mu^{n}}\!\left( \pi_{\mu}\!\left( \vert h \vert^{2} \right) - \vert h \vert^{2} \right) 
\end{displaymath} due to positivity of the linear operators $(\Phi_{\mu^{n}})_{n \in \mathbb{N}}$, it follows that $\left( \Phi_{\mu^{n}}\!\left( \left( \vert h \vert^{2} - \pi_{\mu}\!\left( \vert h \vert^{2} \right) \right) \! f \right) \right)_{n \in \mathbb{N}}$ converges pointwise to $\mathbf{0}$, i.e., $\left(\vert h \vert^{2} - \pi_{\mu}\! \left(\vert h \vert^{2}\right) \right) \! f \in \ker (\pi_{\mu})$. By linearity of $\pi_{\mu}$, this readily implies that $\left(\vert h \vert^{2} - \pi_{\mu}\! \left(\vert h \vert^{2}\right)\right) \! f \in \ker (\pi_{\mu})$ for every $f \in \mathrm{RUCB}(G)$. \hfill $\qed_{\text{Claim\! 1}}$

\textit{Claim 2.} If $h_{1},h_{2} \in \mathrm{H}_{\mu}(G)$, then $h_{1}h_{2} \in \mathrm{C}_{\mu}(G)$ and \begin{displaymath}
	\forall f \in \mathrm{RUCB}(G) \colon \quad (h_{1}h_{2} - \pi_{\mu}(h_{1}h_{2})) f \, \in \, \ker (\pi_{\mu})
\end{displaymath}

\textit{Proof of Claim~2.} Consider any $h_{1},h_{2} \in \mathrm{H}_{\mu}(G)$. A straightforward computation shows that $h_{1}h_{2} = \tfrac{1}{4}\sum_{n=0}^{3} i^{n}\vert g_{n} \vert^{2}$, where $g_{n} \defeq h_{1} + i^{n}\overline{h_{2}}$ for $n \in \{ 0,1,2,3 \}$. Since $g_{n} \in \mathrm{H}_{\mu}(G)$ for each $n \in \{ 0,1,2,3 \}$, Claim~1 implies that $h_{1}h_{2} \in \mathrm{C}_{\mu}(G)$ and moreover \begin{displaymath}
	(h_{1}h_{2} - \pi_{\mu}(h_{1}h_{2})) f \, = \, \tfrac{1}{4}\sum\nolimits_{n=0}^{3} i^{n}\!\left( \left( \vert g_{n} \vert^{2} - \pi_{\mu}\!\left( \vert g_{n} \vert^{2} \right) \right) \! f \right) \, \in \, \ker (\pi_{\mu})
\end{displaymath} for every $f \in \mathrm{RUCB}(G)$. \hfill $\qed_{\text{Claim\! 2}}$

Henceforth, let us denote by $\mathrm{J}_{\mu}(G)$ the $\Vert \cdot \Vert_{\infty}$-closed ideal of $\mathrm{RUCB}(G)$ generated by the subset $\{ h_{1}h_{2} - \pi_{\mu}(h_{1}h_{2}) \mid h_{1},h_{2} \in \mathrm{H}_{\mu}(G) \}$. By Claim~2, $\mathrm{J}_{\mu}(G)$ is contained in $\ker (\pi_{\mu})$.

\textit{Claim 3.} If $n \in \mathbb{N}_{\geq 1}$ and $h_{1},\ldots,h_{n} \in \mathrm{H}_{\mu}(G)$, then $h_{1}\cdots h_{n} \in \mathrm{C}_{\mu}(G)$ and \begin{displaymath}
	h_{1}\cdots h_{n} - \pi_{\mu} (h_{1}\cdots h_{n}) \, \in \, \mathrm{J}_{\mu}(G) .
\end{displaymath}

\textit{Proof of Claim~3.} The proof proceeds by induction. Clearly, if $n=1$, then the statement is trivial. Moreover, if $n=2$, then the desired conclusion follows from Claim~2 and the definition of $\mathrm{J}_{\mu}(G)$. For the inductive step, let $n \in \mathbb{N}_{\geq 2}$ such that the assertion of Claim~3 is valid. Let $h_{1},\ldots,h_{n+1} \in \mathrm{H}_{\mu}(G)$ and $f \defeq h_{1}\cdots h_{n+1}$. Then $f_{1} \defeq (h_{1}h_{2} - \pi_{\mu}(h_{1}h_{2}))h_{3}\cdots h_{n+1} \in \mathrm{J}_{\mu}(G)$ and $f_{1} \in \ker (\pi_{\mu})$ by Claim~2. Furthermore, since $\pi_{\mu}(h_{1}h_{2}) \in \mathrm{H}_{\mu}(G)$, our induction hypothesis asserts that $f_{2} \defeq \pi_{\mu}(h_{1}h_{2})h_{3}\cdots h_{n+1} \in \mathrm{C}_{\mu}(G)$ and $f_{2} - \pi_{\mu}(f_{2}) \in \mathrm{J}_{\mu}(G)$. Consequently, \begin{align*}
	& f \, = \, f_{1} + f_{2} \, \in \, \mathrm{C}_{\mu}(G) , & f - \pi_{\mu}(f) \, = \, f_{1} + f_{2} - \pi_{\mu}(f_{2}) \, \in \, \mathrm{J}_{\mu}(G) .
\end{align*} This completes the induction and hence proves the claim. \hfill $\qed_{\text{Claim\! 3}}$

Let us denote by $\mathrm{A}_{\mu}(G)$ the $\Vert \cdot \Vert_{\infty}$-closed subalgebra of $\mathrm{RUCB}(G)$ generated by $\mathrm{H}_{\mu}(G)$. Since $\mathrm{H}_{\mu}(G)$ is closed under complex conjugation, $\mathrm{A}_{\mu}(G)$ is a $C^{\ast}$-subalgebra of $\mathrm{RUCB}(G)$. By Claim~3, $\mathrm{A}_{\mu}(G) \subseteq \mathrm{C}_{\mu}(G)$. As $\pi_{\mu}$ is an idempotent linear contraction, it follows that \begin{displaymath}
	\Vert h \Vert_{\infty} \, = \, \inf \{ \Vert f \Vert_{\infty} \mid f \in \mathrm{A}_{\mu}(G), \, \pi_{\mu}(f) = h \}
\end{displaymath} for all $h \in \mathrm{H}_{\mu}(G)$, and therefore $\mathrm{A}_{\mu}(G)/\ker\!\left( \pi_{\mu}\vert_{\mathrm{A}_{\mu}(G)} \right) \to \mathrm{H}_{\mu}(G), \, f + \ker\!\left( \pi_{\mu}\vert_{\mathrm{A}_{\mu}(G)} \right) \mapsto \pi_{\mu}(f)$ is an isometric isomorphism of the respective Banach spaces. Moreover, Claim~3 asserts that $f - \pi_{\mu}(f) \in \mathrm{J}_{\mu}(G)$ for every $f \in \mathrm{A}_{\mu}(G)$. Since, again, $\pi_{\mu}$ is idempotent and $\mathrm{J}_{\mu}(G) \subseteq \ker (\pi_{\mu})$ due to Claim~2, we conclude that \begin{displaymath}
	\ker \!\left( \pi_{\mu}\vert_{\mathrm{A}_{\mu}(G)} \right) \, = \, \{ f - \pi_{\mu}(f) \mid f \in \mathrm{A}_{\mu}(G) \} \, = \, \mathrm{J}_{\mu}(G) \cap \mathrm{A}_{\mu}(G) .
\end{displaymath} In particular, $\ker \!\left( \pi_{\mu}\vert_{\mathrm{A}_{\mu}(G)} \right)$ is an ideal of $\mathrm{A}_{\mu}(G)$, and thus $\mathrm{H}_{\mu}(G) \cong \mathrm{A}_{\mu}(G)/\ker\!\left( \pi_{\mu}\vert_{\mathrm{A}_{\mu}(G)} \right)$ constitutes a $C^{\ast}$-algebra with respect to complex conjugation and the multiplication given by \begin{displaymath}
	h_{1} \cdot_{\mu} h_{2} \, \defeq \, \pi_{\mu}(h_{1}h_{2}) \, = \, \lim\nolimits_{n \to \infty} \Phi_{\mu^{n}}(h_{1}h_{2}) \qquad \left(h_{1},h_{2} \in \mathrm{H}_{\mu}(G)\right) .
\end{displaymath} Evidently, the $C^{\ast}$-algebra $\mathrm{H}_{\mu}(G)$ is commutative and unital. \end{proof}

\begin{definition}\label{definition:poisson.boundary} Let $G$ be a topological group and let $\mu \in \Lambda (\mathrm{M}(G))$. The \emph{Poisson boundary} of $(G,\mu)$, denoted by $\Pi_{\mu} (G)$, is defined to be the Gel'fand spectrum of the commutative unital $C^{\ast}$-algebra $\mathrm{H}_{\mu}(G)$, i.e., the compact Hausdorff space of ${}^{\ast}$-homomorphisms from $\mathrm{H}_{\mu}(G)$ to $\mathbb{C}$, endowed with the weak-${}^{\ast}$ topology. \end{definition}

The next proposition provides an integral representation of harmonic functions via Poisson boundaries as introduced above. Let $G$ be a topological group and let $\mu \in \Lambda (\mathrm{M}(G))$. It follows from Lemma~\ref{lemma:convolution}(2) (and Proposition~\ref{proposition:poisson.algebra}) that $\mathrm{H}_{\mu}(G) \to \mathrm{H}_{\mu}(G), \, h \mapsto h \circ \lambda_{g}$ is a well-defined $C^{\ast}$-automorphism for every $g \in G$. In turn, $G$ admits an action on $\Pi_{\mu}(G)$ given by \begin{displaymath}
	(g\xi)(h) \, \defeq \, \xi (h \circ \lambda_{g}) \qquad \left(g \in G, \, \xi \in \Pi_{\mu}(G), \, h \in \mathrm{H}_{\mu}(G)\right) ,
\end{displaymath} which is easily seen to be continuous. For every $f \in \mathrm{H}_{\mu}(G)$, let $\psi_{\mu}(f) \colon \Pi_{\mu}(G) \to \mathbb{C}, \, \xi \mapsto \xi (f)$. Due to Gel'fand duality, $\psi_{\mu} \colon \mathrm{H}_{\mu}(G) \to \mathrm{C}(\Pi_{\mu}(G))$ constitutes an isomorphism of $C^{\ast}$-algebras. Since $\mathrm{H}_{\mu}(G) \to \mathbb{C}, \, h \mapsto h(e)$ is a positive unital linear functional on the $C^{\ast}$-algebra $\mathrm{H}_{\mu}(G)$, the Riesz--Markov--Kakutani representation theorem asserts that there exists a unique regular Borel probability measure $\hat{\mu}$ on $\Pi_{\mu}(G)$ such that $\int \psi_{\mu}(h) \, d\hat{\mu} = h(e)$ for all $h \in \mathrm{H}_{\mu}(G)$.

\begin{prop}[Poisson formula]\label{proposition:poisson.formula} Let $G$ be a topological group and let $\mu \in \Lambda (\mathrm{M}(G))$. For all $h \in \mathrm{H}_{\mu}(G)$ and $g \in G$, \begin{displaymath}
	h(g) \, = \, \int \psi_{\mu}(h)(g\xi) \, d\hat{\mu}(\xi) .
\end{displaymath} \end{prop}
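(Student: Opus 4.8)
The plan is to reduce the asserted identity to the defining property of the representing measure $\hat{\mu}$ by combining the naturality of the Gel'fand transform with the left-translation invariance of harmonicity. The whole argument is a short chain of identifications; the only genuine input beyond unwinding definitions is the stability of $\mathrm{H}_{\mu}(G)$ under right composition with $\lambda_{g}$, which is exactly what legitimises the $G$-action on $\Pi_{\mu}(G)$ in the first place.

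First I would record that, for every $h \in \mathrm{H}_{\mu}(G)$ and $g \in G$, the function $h \circ \lambda_{g}$ again lies in $\mathrm{H}_{\mu}(G)$. Indeed, harmonicity gives $h = \Phi_{\mu}h$, so Lemma~\ref{lemma:convolution}(2) yields
\[
	\Phi_{\mu}(h \circ \lambda_{g}) \, = \, (\Phi_{\mu}h) \circ \lambda_{g} \, = \, h \circ \lambda_{g},
\]
whence $h \circ \lambda_{g}$ is $\mu$-harmonic. This is precisely the fact underlying the observation (made just before Definition~\ref{definition:poisson.boundary}) that $h \mapsto h \circ \lambda_{g}$ is a $C^{\ast}$-automorphism of $\mathrm{H}_{\mu}(G)$ and hence defines the continuous $G$-action on $\Pi_{\mu}(G)$.

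Next I would compute the integrand. Fixing $h$ and $g$, for any $\xi \in \Pi_{\mu}(G)$ the definitions of $\psi_{\mu}$ and of the $G$-action give
\[
	\psi_{\mu}(h)(g\xi) \, = \, (g\xi)(h) \, = \, \xi(h \circ \lambda_{g}) \, = \, \psi_{\mu}(h \circ \lambda_{g})(\xi),
\]
where the final equality uses the first step to guarantee $h \circ \lambda_{g} \in \mathrm{H}_{\mu}(G)$, so that $\psi_{\mu}(h \circ \lambda_{g})$ is a well-defined element of $\mathrm{C}(\Pi_{\mu}(G))$ evaluating as $\xi \mapsto \xi(h \circ \lambda_{g})$. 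In other words, $\psi_{\mu}(h)(g\,\cdot\,)$ coincides with $\psi_{\mu}(h \circ \lambda_{g})$ as a continuous function on the compact space $\Pi_{\mu}(G)$.

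Finally I would integrate and invoke the characterising property of $\hat{\mu}$. Applying the identity $\int \psi_{\mu}(k) \, d\hat{\mu} = k(e)$ to the harmonic function $k \defeq h \circ \lambda_{g}$ gives
\[
	\int \psi_{\mu}(h)(g\xi) \, d\hat{\mu}(\xi) \, = \, \int \psi_{\mu}(h \circ \lambda_{g}) \, d\hat{\mu} \, = \, (h \circ \lambda_{g})(e) \, = \, h(g),
\]
which is the asserted Poisson formula. I do not expect a real obstacle here: the entire content lies in correctly bookkeeping the two actions (on $\mathrm{H}_{\mu}(G)$ and on $\Pi_{\mu}(G)$) and in confirming that the integrand is defined and continuous, the latter being automatic since $\psi_{\mu}(h \circ \lambda_{g}) \in \mathrm{C}(\Pi_{\mu}(G))$ and the map $\xi \mapsto g\xi$ is continuous.
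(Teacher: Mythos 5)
Your proposal is correct and follows essentially the same chain of identities as the paper's own proof: unwind $\psi_{\mu}(h)(g\xi)$ to $\xi(h\circ\lambda_{g})=\psi_{\mu}(h\circ\lambda_{g})(\xi)$ and apply the defining property $\int\psi_{\mu}(k)\,d\hat{\mu}=k(e)$ to $k=h\circ\lambda_{g}$. The only difference is that you spell out the verification that $h\circ\lambda_{g}\in\mathrm{H}_{\mu}(G)$, which the paper records in the paragraph preceding the proposition.
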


\begin{proof} For all $h \in \mathrm{H}_{\mu}(G)$ and $g \in G$, \begin{align*}
	\int \psi_{\mu}(h)(g\xi) \, d\hat{\mu}(\xi) \, &= \, \int (g\xi)(h) \, d\hat{\mu}(\xi) \, = \, \int \xi(h \circ \lambda_{g}) \, d\hat{\mu}(\xi) \\
	& = \, \int \psi_{\mu}(h \circ \lambda_{g})(\xi) \, d\hat{\mu}(\xi) \, = \, (h \circ \lambda_{g})(e) \, = \, h(g) . \qedhere
\end{align*} \end{proof}

Let us study the connection between the amenability of topological groups and the structure of their Poisson boundaries. By Lemma~\ref{lemma:semigroup.closure}, if $G$ is a topological group and $\mu \in \Lambda(\mathrm{M}(G))$, then the weak-${}^{\ast}$ closed convex hull $\Sigma_{\mu} (G) \defeq \convc \{ \mu^{n} \mid n \geq 1 \}$ is a subsemigroup of $\mathrm{M}(G)$.

\begin{lem}\label{lemma:boundary.triviality} Let $G$ be a topological group, let $\mu \in \Lambda(\mathrm{M}(G))$, and let $H \subseteq \mathrm{RUCB}(G)$ be introverted. The following are equivalent. \begin{enumerate}
	\item[$(1)$] $\exists \nu \in \Sigma_{\mu}(G) \ \forall f \in H \ \forall g \in G \colon \ \nu (f \circ \lambda_{g}) = \nu (f)$.
	\item[$(2)$] $\mathrm{H}_{\mu}(G) \cap H \subseteq \mathbb{C}$.		
\end{enumerate} \end{lem}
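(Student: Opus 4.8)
The plan is to rephrase both conditions through the operators $\Phi_{\nu}$. Since $\nu(f \circ \lambda_{g}) = (\Phi_{\nu}f)(g)$ and $\nu(f) = (\Phi_{\nu}f)(e)$, statement~(1) says exactly that some $\nu \in \Sigma_{\mu}(G)$ makes $\Phi_{\nu}f$ a constant function for every $f \in H$, while~(2) says that the only $\mu$-harmonic functions inside $H$ are constants. The decisive structural feature I would exploit is that $H$ is introverted, so $\Phi_{\nu}f \in H$ for all $\nu \in \mathrm{M}(G)$ and $f \in H$; this keeps everything inside $H$ throughout.

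For the implication (1) $\Rightarrow$ (2), I would fix the $\nu$ supplied by~(1) and take an arbitrary $h \in \mathrm{H}_{\mu}(G) \cap H$. Harmonicity gives $\Phi_{\mu}h = h$, hence $\Phi_{\mu^{n}}h = h$ for every $n$ by Lemma~\ref{lemma:convolution}(4), and then $\Phi_{\eta}h = h$ for every $\eta$ in the convex hull of $\{\mu^{n} \mid n \geq 1\}$ by linearity of $\eta \mapsto \Phi_{\eta}h$. I would then pass to the weak-${}^{\ast}$ closure using the weak-${}^{\ast}$-to-pointwise continuity of $\Psi_{h}$ from Lemma~\ref{lemma:introverted}, obtaining $\Phi_{\nu}h = h$. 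Since~(1) forces $\Phi_{\nu}h$ to be constant, $h$ itself is constant, which is precisely~(2).

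For (2) $\Rightarrow$ (1), the crux is to exhibit a single $\nu \in \Sigma_{\mu}(G)$ with $\mu\nu = \nu$. This is exactly the point where the hypothesis $\mu \in \Lambda(\mathrm{M}(G))$ is indispensable: it makes left multiplication $L_{\mu} \colon \eta \mapsto \mu\eta$ weak-${}^{\ast}$ continuous (it is plainly affine) on the weak-${}^{\ast}$ compact convex set $\Sigma_{\mu}(G)$. I would run the standard Ces\`aro argument: for $\nu_{n} \defeq \tfrac{1}{n}\sum_{k=1}^{n}\mu^{k} \in \Sigma_{\mu}(G)$ one computes $L_{\mu}\nu_{n} - \nu_{n} = \tfrac{1}{n}(\mu^{n+1} - \mu) \to 0$ weak-${}^{\ast}$, so any weak-${}^{\ast}$ accumulation point $\nu$ of $(\nu_{n})_{n \in \mathbb{N}}$ satisfies $L_{\mu}\nu = \nu$ by continuity of $L_{\mu}$; alternatively one could simply invoke the Markov--Kakutani fixed point theorem.

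Finally, with $\mu\nu = \nu$ I would conclude as follows. Lemma~\ref{lemma:convolution}(4) yields $\Phi_{\mu} \circ \Phi_{\nu} = \Phi_{\mu\nu} = \Phi_{\nu}$, so $\Phi_{\nu}f$ is $\mu$-harmonic for every $f$; for $f \in H$, introversion puts $\Phi_{\nu}f$ into $\mathrm{H}_{\mu}(G) \cap H$, which by~(2) is contained in $\mathbb{C}$, hence $\Phi_{\nu}f$ is constant. As this single $\nu$ works simultaneously for all $f \in H$, statement~(1) follows. The main obstacle is precisely the fixed-point step, and the hypothesis $\mu \in \Lambda(\mathrm{M}(G))$ is exactly what rescues it, since without continuity of $L_{\mu}$ the accumulation-point argument would collapse.
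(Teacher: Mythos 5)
Your proposal is correct and follows essentially the same route as the paper: the (1)$\Rightarrow$(2) direction is the paper's observation that $\Psi_{h}^{-1}(\{h\})$ is a weak-${}^{\ast}$ closed convex subsemigroup containing $\mu$ (you just unfold this via $\Phi_{\mu^{n}}h=h$, convexity, and closure), and the (2)$\Rightarrow$(1) direction uses the same fixed point $\nu\in\Sigma_{\mu}(G)$ with $\mu\nu=\nu$ (the paper invokes Markov--Kakutani, which you also mention; your Ces\`aro variant is an equally valid way to produce it) followed by the identical introversion argument showing $\Phi_{\nu}f\in\mathrm{H}_{\mu}(G)\cap H\subseteq\mathbb{C}$.
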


\begin{proof} (1)$\Longrightarrow$(2). Let $h \in \mathrm{H}_{\mu}(G)$. By Lemma~\ref{lemma:introverted}, $\Psi_{h}^{-1}(\{ h \}) = \{ \nu \in \mathrm{M}(G) \mid \Phi_{\nu}h = h \}$ forms a closed subset of $\mathrm{M}(G)$. Since moreover $\Psi_{h}^{-1}(\{ h \})$ is a convex subsemigroup of $\mathrm{M}(G)$ containing $\mu$, it follows that $\Sigma_{\mu}(G) \subseteq \Psi_{h}^{-1}(\{ h \})$, i.e., $\Phi_{\nu}h = h$ for every $\nu \in \Sigma_{\mu}(G)$. Hence, if $h \in H$ and there exists $\nu \in \Sigma_{\mu}(G)$ with $\nu (f \circ \lambda_{g}) = \nu (f)$ for all $g \in G$, then \begin{displaymath}
	h(g) = (\Phi_{\nu}h)(g) = \nu (h \circ \lambda_{g}) = \nu (h) = (\Phi_{\nu}h)(e) = h(e)
\end{displaymath} for all $g \in G$, that is, $h$ is constant. 
	
(2)$\Longrightarrow$(1). Since $\mu \in \Lambda(\mathrm{M}(G))$, the affine map $\Sigma_{\mu}(G) \rightarrow \Sigma_{\mu}(G), \, \nu \mapsto \mu \nu$ is continuous. By the Markov--Kakutani fixed-point theorem~\cite{markov,Kakutani38}, there exists $\nu \in \Sigma_{\mu}(G)$ with $\mu \nu = \nu$. Now, consider any $f \in H$. Then $\Phi_{\nu}f \in H$, due to $H$ being introverted. Moreover, since $\Phi_{\mu}(\Phi_{\nu}f) = \Phi_{\mu \nu}f = \Phi_{\nu}f$ by Lemma~\ref{lemma:convolution}(4), the function $\Phi_{\nu}f$ is $\mu$-harmonic, hence constant by assumption. Thus, $\nu (f \circ \lambda_{g}) = (\Phi_{\nu}f)(g) = (\Phi_{\nu}f)(e) = \nu (f)$ for every $g \in G$.\end{proof}

In the light of Remark~\ref{remark:measures.vs.functionals}, the following proposition particularly applies to any regular Borel probability measure on a topological group (Proposition~\ref{proposition:topological.centre}) as well as to any arbitrary Borel probability measure on a separable topological group (Proposition~\ref{proposition:topological.centre.pachl}).

\begin{prop}\label{proposition:trivial.boundary} Let $G$ be a topological group and let $\mu \in \Lambda (\mathrm{M}(G))$. Then the following are equivalent. \begin{enumerate}
	\item[$(1)$] The Poisson boundary $\Pi_{\mu} (G)$ is trivial, i.e., a singleton.
	\item[$(2)$] $\mathrm{H}_{\mu}(G) = \mathbb{C}$.
	\item[$(3)$] There is $\nu \in \Sigma_{\mu}(G)$ such that $\nu (f \circ \lambda_{g}) = \nu (f)$ for all $f \in \mathrm{RUCB}(G)$ and $g \in G$.
\end{enumerate} \end{prop}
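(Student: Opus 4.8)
The plan is to prove the cycle of implications $(1) \Leftrightarrow (2)$ and $(2) \Leftrightarrow (3)$, drawing on the $C^{\ast}$-algebraic description of $\mathrm{H}_{\mu}(G)$ from Proposition~\ref{proposition:poisson.algebra}, the Poisson formula (Proposition~\ref{proposition:poisson.formula}), and Lemma~\ref{lemma:boundary.triviality}. The equivalence $(1) \Leftrightarrow (2)$ should be the routine part: by Definition~\ref{definition:poisson.boundary}, $\Pi_{\mu}(G)$ is the Gel'fand spectrum of the commutative unital $C^{\ast}$-algebra $\mathrm{H}_{\mu}(G)$. By Gel'fand duality, the spectrum of a commutative unital $C^{\ast}$-algebra is a singleton if and only if the algebra is one-dimensional, i.e.\ equals $\mathbb{C}$. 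Thus $(1)$ and $(2)$ are immediate translations of one another.

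For $(2) \Leftrightarrow (3)$, the natural move is to apply Lemma~\ref{lemma:boundary.triviality} with the introverted set $H = \mathrm{RUCB}(G)$. First I would check that $\mathrm{RUCB}(G)$ is indeed introverted: this is clear since $\Phi_{\nu}f \in \mathrm{RUCB}(G)$ for all $f \in \mathrm{RUCB}(G)$ and $\nu \in \mathrm{M}(G)$ by Lemma~\ref{lemma:convolution}(1). With this choice of $H$, condition~(2) of Lemma~\ref{lemma:boundary.triviality} reads $\mathrm{H}_{\mu}(G) \cap \mathrm{RUCB}(G) = \mathrm{H}_{\mu}(G) \subseteq \mathbb{C}$, which (since $\mathrm{H}_{\mu}(G)$ always contains the constants) is exactly the statement $\mathrm{H}_{\mu}(G) = \mathbb{C}$, i.e.\ our~(2). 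Meanwhile condition~(1) of Lemma~\ref{lemma:boundary.triviality} with $H = \mathrm{RUCB}(G)$ reads: there exists $\nu \in \Sigma_{\mu}(G)$ with $\nu(f \circ \lambda_{g}) = \nu(f)$ for all $f \in \mathrm{RUCB}(G)$ and $g \in G$, which is precisely our~(3). Hence the equivalence of~(2) and~(3) is a direct specialization of Lemma~\ref{lemma:boundary.triviality}.

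The only subtlety worth flagging is the hypothesis of Lemma~\ref{lemma:boundary.triviality}: it requires $\mu \in \Lambda(\mathrm{M}(G))$ and $H$ introverted, both of which hold here ($\mu \in \Lambda(\mathrm{M}(G))$ is assumed in the statement of this Proposition, and $\mathrm{RUCB}(G)$ is introverted as noted). No genuine obstacle arises, since all the heavy lifting — the construction of the $C^{\ast}$-algebra structure on $\mathrm{H}_{\mu}(G)$ and the fixed-point argument via Markov--Kakutani embedded in Lemma~\ref{lemma:boundary.triviality} — has already been carried out in the preceding results. The proof therefore amounts to assembling these three ingredients and verifying that the quantifiers match up.

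\begin{proof}
The equivalence of~(1) and~(2) follows at once from Gel'fand duality: by Definition~\ref{definition:poisson.boundary}, $\Pi_{\mu}(G)$ is the Gel'fand spectrum of the commutative unital $C^{\ast}$-algebra $\mathrm{H}_{\mu}(G)$ furnished by Proposition~\ref{proposition:poisson.algebra}, and the spectrum of such an algebra is a singleton precisely when the algebra is one-dimensional, that is, when $\mathrm{H}_{\mu}(G) = \mathbb{C}$. To see the equivalence of~(2) and~(3), we apply Lemma~\ref{lemma:boundary.triviality} with the introverted set $H \defeq \mathrm{RUCB}(G)$; this set is introverted because $\Phi_{\nu}f \in \mathrm{RUCB}(G)$ for all $f \in \mathrm{RUCB}(G)$ and $\nu \in \mathrm{M}(G)$ by Lemma~\ref{lemma:convolution}(1). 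With this choice, condition~(2) of Lemma~\ref{lemma:boundary.triviality} reads $\mathrm{H}_{\mu}(G) \subseteq \mathbb{C}$, which, as $\mathrm{H}_{\mu}(G)$ contains the constant functions, is equivalent to the equality $\mathrm{H}_{\mu}(G) = \mathbb{C}$, i.e.\ to~(2) above. Likewise, condition~(1) of Lemma~\ref{lemma:boundary.triviality} asserts the existence of $\nu \in \Sigma_{\mu}(G)$ with $\nu(f \circ \lambda_{g}) = \nu(f)$ for all $f \in \mathrm{RUCB}(G)$ and $g \in G$, which is precisely~(3) above. Hence Lemma~\ref{lemma:boundary.triviality} yields the equivalence of~(2) and~(3), completing the proof.
\end{proof}
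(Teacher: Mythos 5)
Your proof is correct and follows exactly the same route as the paper: the equivalence of (1) and (2) via Gel'fand duality, and the equivalence of (2) and (3) by specializing Lemma~\ref{lemma:boundary.triviality} to $H = \mathrm{RUCB}(G)$. The extra verification that $\mathrm{RUCB}(G)$ is introverted is a harmless (and welcome) addition the paper leaves implicit.
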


\begin{proof} The equivalence of~(1) and~(2) is an immediate consequence of Gel'fand duality, while the equivalence of~(2) and~(3) follows by Lemma~\ref{lemma:boundary.triviality} for $H = \mathrm{RUCB}(G)$. \end{proof}

The subsequent result generalizes work of Kaimanovich--Vershik~\cite[Theorem~4.3]{KaimanovichVershik} and Rosenblatt~\cite[Theorem~1.10]{rosenblatt}. The proof given below follows closely the lines of Kaimanovich and Vershik~\cite[Proof of Theorem~4.3]{KaimanovichVershik}.

\begin{thm}\label{theorem:main} Let $G$ be a second-countable topological group. Then the following statements are equivalent. \begin{enumerate}
	\item[$(1)$] $G$ is amenable.
	\item[$(2)$] $G$ admits a fully supported, regular Borel probability measure $\mu$ such that $(\mu^{n})_{n \in \mathbb{N}}$ UEB-converges to invariance over $G$.
	\item[$(3)$] $G$ admits a Borel probability measure $\mu$ such that $(\mu^{n})_{n \in \mathbb{N}}$ UEB-converges to invariance over $G$.
\end{enumerate} \end{thm}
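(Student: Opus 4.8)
The plan is to prove the cycle $(1)\Rightarrow(2)\Rightarrow(3)\Rightarrow(1)$. The implications $(2)\Rightarrow(3)$ is trivial (a fully supported regular measure is in particular a Borel measure), so the genuine work lies in $(1)\Rightarrow(2)$ and $(3)\Rightarrow(1)$. I would begin with the easy direction $(3)\Rightarrow(1)$: if $(\mu^{n})_{n}$ UEB-converges to invariance, then by Remark~\ref{remark:convergence.to.invariance} any weak-${}^{\ast}$ accumulation point $\nu$ of the sequence in $\mathrm{M}(G)$ is $G$-invariant, i.e.\ a $G$-fixed point in $\mathrm{M}(G)$, which is exactly amenability. (Weak-${}^{\ast}$ compactness of $\mathrm{M}(G)$ guarantees such an accumulation point exists.) This disposes of one direction with essentially no extra effort.

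The heart of the matter is $(1)\Rightarrow(2)$, and this is where I expect the main obstacle. Second-countability lets me invoke the Birkhoff--Kakutani theorem to fix a right-invariant metric $d$ generating the topology of $G$, so that by Lemma~\ref{lemma:UEB.metric} the UEB topology is controlled, on norm-bounded sets, by the single norm $\mathrm{p}_{d}$. The aim is then to build a fully supported regular measure $\mu$ whose convolution powers become $\mathrm{p}_{d}$-asymptotically invariant, using amenability as input. The strategy, following Kaimanovich--Vershik, is to exploit the contraction estimate of Corollary~\ref{corollary:UEB.metric}, namely $\mathrm{p}_{d}(\mu\nu)\le\mathrm{p}_{d}(\mu)$ for $\nu\in\mathrm{M}(G)$: this says convolution on the right by states is $\mathrm{p}_{d}$-nonexpansive, so the sequence $\bigl(\mathrm{p}_{d}(\mu^{n}\lambda_{g}-\mu^{n})\bigr)_{n}$, or rather the relevant quantities measuring deviation from left-invariance, should be monotone and hence convergent. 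The subtle point is to arrange that the limiting deviation is actually zero; this is where amenability must be injected, presumably by choosing $\mu$ to be (a convex combination involving) the asymptotically invariant finitely supported measures furnished by Theorem~\ref{theorem:topological.day} and Corollary~\ref{corollary:topological.day}.

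Concretely, I would enumerate a countable dense subset $\{g_{k}\}$ of $G$ (available by second-countability and exploited via Lemma~\ref{lemma:dense.convergence.to.invariance}, which lets me test convergence to invariance only on a dense set) and a countable exhausting family of UEB sets controlling $\mathrm{p}_{d}$. Using Corollary~\ref{corollary:topological.day}, for each finite stage I can produce a finitely supported regular measure that is $\epsilon$-invariant on the prescribed translates while having the prescribed finite set in its support. I would then assemble a single measure $\mu$ as a suitably weighted series $\mu=\sum_{k}\alpha_{k}\mu_{k}$ with $\alpha_{k}>0$ and $\sum_{k}\alpha_{k}=1$, chosen so that the supports together exhaust a countable dense set and hence (after taking a convolution power, which spreads the support via $\spt(\mu\nu)\subseteq(\spt\mu)(\spt\nu)$) yield $\spt(\mu)=G$. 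Regularity is preserved since each $\mu_{k}$ is finitely supported hence regular, and a norm-convergent series of regular measures is regular. The delicate estimate is that the convolution powers $\mu^{n}$ inherit asymptotic invariance: here I would combine the right-nonexpansiveness from Corollary~\ref{corollary:UEB.metric} with the per-generator smallness to show $\mathrm{p}_{d}(g_{k}\mu^{n}-\mu^{n})\to0$ for each fixed $k$.

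The main obstacle, as I see it, is precisely this last synthesis: ensuring that a \emph{single} fixed measure $\mu$ has \emph{all} its convolution powers converging to invariance simultaneously over the dense set, rather than merely having, for each generator separately, some measure that works. The nonexpansiveness estimate of Corollary~\ref{corollary:UEB.metric} is the crucial lever---it shows that whatever invariance defect $\mu$ has at the generators cannot grow under convolution, and one must argue it in fact decays. Managing the interplay between the weights $\alpha_{k}$, the approximation parameters $\epsilon_{k}$, and the growth of the relevant UEB sets, so that the diagonal limit genuinely vanishes while full support and regularity are retained, is the technically demanding core and the step I would expect to occupy the bulk of the proof.
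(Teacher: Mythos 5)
Your skeleton coincides with the paper's: $(2)\Rightarrow(3)$ is trivial, $(3)\Rightarrow(1)$ is exactly Remark~\ref{remark:convergence.to.invariance} together with weak-${}^{\ast}$ compactness of $\mathrm{M}(G)$, and for $(1)\Rightarrow(2)$ the paper likewise fixes a right-invariant metric $d$ via Birkhoff--Kakutani, reduces the UEB topology to the single norm $\mathrm{p}_{d}$ on norm-bounded sets (Lemma~\ref{lemma:UEB.metric}), tests invariance only on a countable dense set $S=\bigcup_{m}S_{m}$ (Lemma~\ref{lemma:dense.convergence.to.invariance}), builds $\mu=\sum_{m}\tau_{m}\alpha_{m}$ from finitely supported almost-invariant measures supplied by Corollary~\ref{corollary:topological.day}, and uses the non-expansiveness $\mathrm{p}_{d}(\xi\nu)\leq\mathrm{p}_{d}(\xi)$ of Corollary~\ref{corollary:UEB.metric} to get monotonicity of $n\mapsto\mathrm{p}_{d}(g\mu^{n}-\mu^{n})$. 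All of that you have, and you correctly flag the remaining issue.

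What is missing is the mechanism that forces the monotone sequence to have infimum $0$, and ``per-generator smallness'' of the $\alpha_{m}$ does not supply it: non-expansiveness only prevents the defect from growing, so without a further idea you merely obtain $\mathrm{p}_{d}(g\mu^{n}-\mu^{n})\leq\mathrm{p}_{d}(g\mu-\mu)$, a fixed positive bound. The paper's (Kaimanovich--Vershik) device is an \emph{absorption} condition built into the recursion: having chosen $\alpha_{0},\ldots,\alpha_{m-1}$ and an exponent $n_{m}$ with $(\tau_{0}+\ldots+\tau_{m-1})^{n_{m}}<\tfrac{1}{m}$, one demands that $\alpha_{m}$ be $\tfrac{1}{m}$-invariant under left translation by every element of $S_{m}\cup(\spt\alpha_{m-1})^{n_{m}}$, and that this finite set be contained in $\spt(\alpha_{m})$. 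Expanding $\mu^{n_{m}}=\sum_{k}\tau_{k_{1}}\cdots\tau_{k_{n_{m}}}\alpha_{k_{1}}\cdots\alpha_{k_{n_{m}}}$, the terms with all indices below $m$ have total mass less than $\tfrac{1}{m}$ and are discarded; in every other term the first letter $\alpha_{k_{j}}$ with $k_{j}\geq m$ approximately absorbs the entire prefix $g\alpha_{k_{1}}\cdots\alpha_{k_{j-1}}$, which is a measure supported in $(\spt\alpha_{m-1})^{n_{m}}$, giving $\mathrm{p}_{d}\!\left(g\alpha_{k_{1}}\cdots\alpha_{k_{j-1}}\alpha_{k_{j}}-\alpha_{k_{j}}\right)\leq\tfrac{1}{m}$ and likewise without $g$; combining these and applying Corollary~\ref{corollary:UEB.metric} to the suffix yields $\mathrm{p}_{d}\!\left(g\mu^{n_{m}}-\mu^{n_{m}}\right)<\tfrac{4}{m}$ for $g\in S_{m-1}$. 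It is this conditioning of each $\alpha_{m}$ on the supports (and chosen convolution powers) of its predecessors --- not almost-invariance at the generators alone --- that makes the diagonal limit vanish, and it is the one idea your plan does not contain.
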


\begin{proof} (2)$\Longrightarrow$(3). This is trivial.
	
(3)$\Longrightarrow$(1). This is due to Remark~\ref{remark:convergence.to.invariance}.
	
(1)$\Longrightarrow$(2). Since $G$ is second-countable, the Birkhoff--Kakutani theorem~\cite{birkhoff,Kakutani36} asserts that $G$ admits a right-invariant metric $d$ generating the topology of $G$, and furthermore we find an increasing sequence of finite subsets \begin{displaymath}
	\{ e \} \eqdef S_{0} \, \subseteq \, S_{1} \, \subseteq \, \ldots \, \subseteq \, S_{m} \, \subseteq \, S_{m+1} \, \subseteq \, \ldots \, \subseteq \, G
\end{displaymath} such that $S \defeq \bigcup \{ S_{m} \mid m \in \mathbb{N}\}$ is dense in $G$. Choose a sequence $(\tau_{m})_{m \in \mathbb{N}}$ of positive reals so that $\sum_{m \in \mathbb{N}} \tau_{m} = 1$. For each $m \in \mathbb{N}_{\geq 1}$, we pick $n_{m} \in \mathbb{N}_{\geq 1}$ such that $(\tau_{0}+\ldots+\tau_{m-1})^{n_{m}} < \frac{1}{m}$. Thanks to Corollary~\ref{corollary:topological.day}, we may recursively choose a sequence $(\alpha_{m})_{m \in \mathbb{N}}$ of finitely supported, regular Borel probability measures on $G$ such that $S_{0} \subseteq \spt (\alpha_{0})$ and, for each $m \in \mathbb{N}_{\geq 1}$, \begin{itemize}
	\item[(i)] $\mathrm{p}_{d}(g\alpha_{m}-\alpha_{m}) < \frac{1}{m}$ for all $g \in S_{m} \cup (\spt \alpha_{m-1})^{n_{m}}$, and \smallskip
	\item[(ii)] $S_{m} \cup (\spt \alpha_{m-1})^{n_{m}} \subseteq \, \spt (\alpha_{m})$. 
\end{itemize} Consider the regular Borel probability measure $\mu \defeq \sum_{m \in \mathbb{N}} \tau_{m}\alpha_{m}$ on $G$. Then $\spt (\mu) = G$, since $\spt (\mu)$ is a closed subset of $G$ containing $\spt (\alpha_{m}) \supseteq S_{m}$ for all $m \in \mathbb{N}$. We will show that $(\mu^{n})_{n \in \mathbb{N}}$ UEB-converges to invariance over $G$. Our proof proceeds in three steps.

\textit{Claim 1.} For all $m \in \mathbb{N}_{\geq 1}$, $k \in \mathbb{N}^{n_{m}}\setminus \{ 0,\ldots,m-1 \}^{n_{m}}$ and $g \in S_{m-1}$, \begin{displaymath}
	\mathrm{p}_{d}\! \left(g\alpha_{k_{1}} \cdots \alpha_{k_{n_{m}}} - \alpha_{k_{1}} \cdots \alpha_{k_{n_{m}}}\right) \, \leq \, \tfrac{2}{m} .
\end{displaymath}

\textit{Proof of Claim~1.} Let $m \in \mathbb{N}_{\geq 1}$ and $\ell\defeq n_{m}$. Let $g \in S_{m-1}$ and $k \in \mathbb{N}^{\ell}\setminus \{ 0,\ldots,m-1 \}^{\ell}$, and put $j \defeq \min \{ i \in \{ 1,\ldots,\ell \} \mid k_{i} \geq m \}$. For \begin{displaymath}
	\theta \defeq \alpha_{k_{1}} \cdots \alpha_{k_{\ell}} , \qquad \theta_{1} \defeq \alpha_{k_{1}} \cdots \alpha_{k_{j-1}} , \qquad \theta_{2} \defeq \alpha_{k_{j+1}} \cdots \alpha_{k_{\ell}} , 
\end{displaymath} we note that $\theta = \theta_{1} \alpha_{k_{j}} \theta_{2}$. For each $i \in \{ 1,\ldots,j-1 \}$, the definition of $j$ implies that $k_{i} < m$ and therefore $\spt (\alpha_{k_{i}}) \subseteq \spt (\alpha_{m-1})$ by~(ii). Since $j \leq \ell$, we have $\spt (\theta_{1}) \subseteq (\spt \alpha_{m-1})^{\ell - 1}$. Also, $g \in \spt (\alpha_{m-1})$ by~(ii), and so $\spt (g\theta_{1}) \subseteq (\spt \alpha_{m-1})^{\ell}$. Thus, assertion~(i) implies that \begin{align*}
	& \mathrm{p}_{d}\!\left(\alpha_{k_{j}} - \theta_{1}\alpha_{k_{j}}\right) \, \leq \, \sum\nolimits_{h \in \spt (\theta_{1})} \theta_{1}(\{ h \}) \mathrm{p}_{d}\!\left(\alpha_{k_{j}} - h\alpha_{k_{j}}\right) \, \leq \, \tfrac{1}{m} , \\
	& \mathrm{p}_{d}\!\left(\alpha_{k_{j}} - g\theta_{1}\alpha_{k_{j}}\right) \, \leq \, \sum\nolimits_{h \in \spt (g\theta_{1})} (g\theta_{1})(\{ h \}) \mathrm{p}_{d}\!\left(\alpha_{k_{j}} - h\alpha_{k_{j}}\right) \, \leq \, \tfrac{1}{m} .
\end{align*} Consequently, thanks to Corollary~\ref{corollary:UEB.metric}, \begin{displaymath}
	\mathrm{p}_{d}(g\theta - \theta ) \, = \, \mathrm{p}_{d}\!\left( \! \left( g\theta_{1}\alpha_{k_{j}} - \theta_{1}\alpha_{k_{j}} \right) \! \theta_{2} \right) \, \leq \, \mathrm{p}_{d}\!\left(g\theta_{1}\alpha_{k_{j}} - \theta_{1}\alpha_{k_{j}}\right) \, \leq \, \tfrac{2}{m} .
\end{displaymath} This finishes the proof of Claim~1. \hfill $\qed_{\text{Claim\! 1}}$

\textit{Claim 2.} For every $m \in \mathbb{N}_{\geq 1}$ and $g \in S_{m-1}$, \begin{displaymath}
	\mathrm{p}_{d}\!\left(g\mu^{n_{m}} - \mu^{n_{m}}\right) \, < \, \tfrac{4}{m} .
\end{displaymath}

\textit{Proof of Claim~2.} Consider any $m \in \mathbb{N}_{\geq 1}$ and $g \in S_{m-1}$. We will abbreviate $\ell\defeq n_{m}$. Noting that $\mu^{\ell} = \sum\nolimits_{k \in \mathbb{N}^{\ell}} \tau_{k_{1}}\cdots \tau_{k_{\ell}} \alpha_{k_{1}} \cdots \alpha_{k_{\ell}}$, we define \begin{displaymath}
	\nu_{1} \, \defeq \, \sum\nolimits_{k \in \{ 0,\ldots,m-1 \}^{\ell}} \tau_{k_{1}}\cdots \tau_{k_{\ell}} \alpha_{k_{1}} \cdots \alpha_{k_{\ell}}
\end{displaymath} and $\nu_{2} \defeq \mu^{\ell} - \nu_{1}$. Evidently, \begin{displaymath}
	\mathrm{p}_{d}(g\nu_{1}-\nu_{1}) \leq \mathrm{p}_{d}(g\nu_{1}) + \mathrm{p}_{d}(\nu_{1}) \leq 2 \sum\nolimits_{k \in \{ 0,\ldots,m-1 \}^{\ell}} \tau_{k_{1}}\cdots \tau_{k_{\ell}} = 2(\tau_{0} + \ldots + \tau_{m-1})^{\ell} < \tfrac{2}{m} .
\end{displaymath} Furthermore, according to Claim~1, \begin{displaymath}
	\mathrm{p}_{d}(g\nu_{2} - \nu_{2}) \, \leq \, \sum\nolimits_{k \in \mathbb{N}^{\ell}\setminus \{ 0,\ldots,m-1\}^{\ell}} \tau_{k_{1}}\cdots \tau_{k_{\ell}} \mathrm{p}_{d}\!\left(g\alpha_{k_{1}} \cdots \alpha_{k_{\ell}} - \alpha_{k_{1}} \cdots \alpha_{k_{\ell}}\right) \, \leq \, \tfrac{2}{m} .
\end{displaymath} Consequently, $\mathrm{p}_{d}\!\left(g\mu^{\ell}-\mu^{\ell}\right) \leq \mathrm{p}_{d}(g\nu_{1}-\nu_{1}) + \mathrm{p}_{d}(g\nu_{2}-\nu_{2}) \leq \frac{4}{m}$ as desired. \hfill $\qed_{\text{Claim\! 2}}$

\textit{Claim 3.} The sequence $(\mu^{n})_{n \in \mathbb{N}}$ UEB-converges to invariance over $G$.\newline
\textit{Proof of Claim 3.} Thanks to Lemma~\ref{lemma:dense.convergence.to.invariance} and $S$ being dense in $G$, it is sufficient to show that, for every $g \in S$, the sequence $(g\mu^{n}-\mu^{n})_{n \in \mathbb{N}}$ converges to $\mathbf{0} \in \mathrm{RUCB}(G)^{\ast}$ with respect to the UEB topology. For this purpose, let $g \in S$. According to Corollary~\ref{corollary:UEB.metric}, \begin{displaymath}
	\mathrm{p}_{d}\!\left(g\mu^{n+1}-\mu^{n+1}\right) \, = \, \mathrm{p}_{d}((g\mu^{n}-\mu^{n})\mu) \, \leq \, \mathrm{p}_{d}(g\mu^{n}-\mu^{n})
\end{displaymath} for all $n \in \mathbb{N}$, i.e., the sequence $(\mathrm{p}_{d}(g\mu^{n}-\mu^{n}))_{n\in \mathbb{N}}$ is decreasing. Moreover, Claim~2 gives that \begin{displaymath}
	\inf\nolimits_{n \in \mathbb{N}} \mathrm{p}_{d}(g\mu^{n}-\mu^{n}) \, \leq \, \inf\nolimits_{m \in \mathbb{N}} \mathrm{p}_{d}(g\mu^{n_{m}}-\mu^{n_{m}}) \, = \, 0 ,
\end{displaymath} and therefore $\mathrm{p}_{d}(g\mu^{n} - \mu^{n}) \longrightarrow 0$ as $n \to \infty$. Since $\{ g\mu^{n} - \mu^{n} \mid n \in \mathbb{N} \} \cup \{ \mathbf{0} \}$ is a $\Vert \cdot \Vert$-bounded subset of $\mathrm{RUCB}(G)^{\ast}$, it follows by Lemma~\ref{lemma:UEB.metric} that $(g\mu^{n}-\mu^{n})_{n \in \mathbb{N}}$ converges to $\mathbf{0} \in \mathrm{RUCB}(G)^{\ast}$ with respect to the UEB topology, as desired. \hfill $\qed_{\text{Claim\! 3}}$ \end{proof}

We deduce a general topological version of Furstenberg's conjecture~\cite{furstenberg73}, established for countable discrete groups by Kaimanovich--Vershik~\cite{KaimanovichVershik} and for second-countable locally compact groups by Rosenblatt~\cite{rosenblatt}. 

\begin{cor}\label{corollary:main} Let $G$ be a second-countable topological group. Then the following statements are equivalent. \begin{enumerate}
	\item[$(1)$] $G$ is amenable.
	\item[$(2)$] $G$ admits a fully supported, regular Borel probability measure $\mu$ such that $\Pi_{\mu}(G)$ is trivial.
	\item[$(3)$] $G$ admits a Borel probability measure $\mu$ such that $\Pi_{\mu}(G)$ is trivial.
\end{enumerate} \end{cor}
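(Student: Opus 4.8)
The plan is to read Corollary~\ref{corollary:main} as a translation of Theorem~\ref{theorem:main} through the dictionary supplied by Proposition~\ref{proposition:trivial.boundary}, which identifies triviality of $\Pi_{\mu}(G)$ with the existence of a $G$-invariant element inside $\Sigma_{\mu}(G)$. The one structural observation I would flag at the outset is that a second-countable group is simultaneously separable and metrizable, so that \emph{every} Borel probability measure $\mu$ on $G$ satisfies $\mu \in \Lambda(\mathrm{M}(G))$ by Proposition~\ref{proposition:topological.centre.pachl} (and a regular one does so already by Proposition~\ref{proposition:topological.centre}). Consequently $\Pi_{\mu}(G)$ is defined for every measure occurring in the three statements, and Proposition~\ref{proposition:trivial.boundary} applies without further ado.

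The two implications not passing through Theorem~\ref{theorem:main} are immediate. Clearly (2)$\Longrightarrow$(3). For (3)$\Longrightarrow$(1), suppose $\mu$ is a Borel probability measure with $\Pi_{\mu}(G)$ trivial. Since $\mu \in \Lambda(\mathrm{M}(G))$, the equivalence (1)$\Leftrightarrow$(3) of Proposition~\ref{proposition:trivial.boundary} produces $\nu \in \Sigma_{\mu}(G) \subseteq \mathrm{M}(G)$ with $\nu(f \circ \lambda_{g}) = \nu(f)$ for all $f \in \mathrm{RUCB}(G)$ and $g \in G$; that is, $g\nu = \nu$ for every $g \in G$, so $\nu$ is a $G$-fixed point in $\mathrm{M}(G)$ and $G$ is amenable by definition.

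The remaining implication (1)$\Longrightarrow$(2) is where Theorem~\ref{theorem:main} enters. Assuming $G$ amenable, I would invoke the implication (1)$\Longrightarrow$(2) of that theorem to obtain a fully supported, regular Borel probability measure $\mu$ such that $(\mu^{n})_{n \in \mathbb{N}}$ UEB-converges to invariance over $G$; regularity guarantees $\mu \in \Lambda(\mathrm{M}(G))$, so $\Pi_{\mu}(G)$ is defined. It then remains to deduce triviality of $\Pi_{\mu}(G)$. By weak-${}^{\ast}$ compactness of $\mathrm{M}(G)$ the sequence $(\mu^{n})_{n \in \mathbb{N}}$ admits a weak-${}^{\ast}$ accumulation point $\nu$, and $\nu$ lies in the weak-${}^{\ast}$ closure of $\{ \mu^{n} \mid n \geq 1 \}$, hence in $\Sigma_{\mu}(G)$. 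Because $(\mu^{n})_{n \in \mathbb{N}}$ UEB-converges to invariance, Remark~\ref{remark:convergence.to.invariance} forces $\nu$ to be $G$-invariant. Thus $\nu$ witnesses condition (3) of Proposition~\ref{proposition:trivial.boundary}, and therefore $\Pi_{\mu}(G)$ is trivial, which together with full support and regularity of $\mu$ completes (1)$\Longrightarrow$(2).

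I expect no genuine obstacle here, since the analytic content has already been absorbed into Theorem~\ref{theorem:main} and Proposition~\ref{proposition:trivial.boundary}. The only points requiring care are bookkeeping ones: first, confirming that the ambient measures all sit in the topological centre $\Lambda(\mathrm{M}(G))$---which is exactly what separability and regularity buy via Propositions~\ref{proposition:topological.centre.pachl} and~\ref{proposition:topological.centre}---so that the Poisson boundary is even well-defined; and second, checking that a weak-${}^{\ast}$ accumulation point of the convolution powers actually belongs to $\Sigma_{\mu}(G)$, so that the invariant mean produced by Remark~\ref{remark:convergence.to.invariance} is of the shape demanded by Proposition~\ref{proposition:trivial.boundary}.
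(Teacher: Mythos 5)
Your proposal is correct and follows essentially the same route as the paper: both deduce (1)$\Longrightarrow$(2) from Theorem~\ref{theorem:main} by taking a weak-${}^{\ast}$ accumulation point of the convolution powers, invoking Remark~\ref{remark:convergence.to.invariance} for its invariance, and concluding via Propositions~\ref{proposition:topological.centre} and~\ref{proposition:trivial.boundary}, while (3)$\Longrightarrow$(1) is exactly the paper's appeal to Propositions~\ref{proposition:topological.centre.pachl} and~\ref{proposition:trivial.boundary}. The only cosmetic difference is that you spell out the bookkeeping about membership in $\Lambda(\mathrm{M}(G))$ explicitly at the outset, which the paper leaves implicit.
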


\begin{proof} (2)$\Longrightarrow$(3). This is trivial.
	
(3)$\Longrightarrow$(1). This is an immediate consequence of Proposition~\ref{proposition:topological.centre.pachl} and Proposition~\ref{proposition:trivial.boundary}.

(1)$\Longrightarrow$(2). Suppose that $G$ is amenable. By Theorem~\ref{theorem:main}, there exists a fully supported, regular Borel probability measure $\mu$ on $G$ such that $(\mu^{n})_{n \in \mathbb{N}}$ UEB-converges to invariance. Thanks to compactness, we find a weak-${}^{\ast}$ accumulation point $\nu$ of the sequence $(\mu^{n})_{n \in \mathbb{N}}$ in~$\mathrm{M}(G)$. Evidently, $\nu \in \Sigma_{\mu}(G)$. Due to Remark~\ref{remark:convergence.to.invariance}, $\nu$ is $G$-invariant. Hence, $\Pi_{\mu}(G)$ must be trivial by Proposition~\ref{proposition:topological.centre} and Proposition~\ref{proposition:trivial.boundary}. \end{proof}

\begin{remark} It would be very interesting to establish an analogue of Theorem~\ref{theorem:main} for more general topological groups. In this regard, it seems natural to consider separable or, more generally, $\omega$-bounded groups. Recall that a topological group~$G$ is \emph{$\omega$-bounded} if for every open neighborhood $U$ of the neutral element in $G$ there exists a countable subset $C \subseteq G$ such that $UC = G$. By work of Guran~\cite{guran}, a topological group is $\omega$-bounded if and only if it is isomorphic to a topological subgroup of a product of second-countable groups. This suggests employing inverse spectra techniques to investigate potential generalizations of Theorem~\ref{theorem:main}. \end{remark}

\section{Liouville actions on metric spaces}\label{section:liouville}

In this section, we turn our attention towards continuous isometric actions of topological groups on metric spaces and study the Liouville property for such actions. More precisely, given a metric space $X$, we consider the topological group $\mathrm{Iso}(X)$ of all isometric self-bijections of $X$, endowed with the topology of pointwise convergence. Of course, a continuous isometric action of a topological group $G$ upon $X$ corresponds naturally to a continuous homomorphism from $G$ into $\mathrm{Iso}(X)$.

Our first observation concerns introverted UEB sets on topological groups arising from continuous isometric actions. For this purpose, let us define ${f\!\!\upharpoonright_{x}} \colon G \to \mathbb{R}, \, g \mapsto f\!\left(g^{-1}x\right)$ for any group $G$ acting on a set $X$, any $x \in X$ and $f \in \mathbb{R}^{X}$.

\begin{lem}\label{lemma:metric.stabilizer.introverted} Let $G$ be a topological group acting continuously by isometries upon a metric space $X$. For every $x \in X$, the set $\mathrm{L}_{x}(G, X) \defeq \left\{ {f\!\!\upharpoonright_{x}} \left\vert \, f \in \mathrm{Lip}_{1}^{1}(X) \right\} \right.$ belongs to $\mathrm{RUEB}(G)$, is convex, right-translation closed, and compact w.r.t.~the topology of pointwise convergence, thus is introverted. \end{lem}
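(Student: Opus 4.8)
The plan is to verify each of the listed properties of $\mathrm{L}_x(G,X)$ in turn, using the continuity of the isometric action to control uniform equicontinuity and using compactness of the space of $1$-Lipschitz functions bounded by $1$ for the topological claims.

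First I would establish that $\mathrm{L}_x(G,X) \in \mathrm{RUEB}(G)$. Boundedness is immediate: if $f \in \mathrm{Lip}_1^1(X)$ then $\lvert f\!\!\upharpoonright_{x}(g)\rvert = \lvert f(g^{-1}x)\rvert \leq 1$, so the set is $\Vert\cdot\Vert_\infty$-bounded by $1$. For right-uniform equicontinuity, fix $\epsilon > 0$. Since the action $G \times X \to X$ is continuous and acts by isometries, the orbit map $G \to X,\ g \mapsto g^{-1}x$ is continuous; hence there is a neighbourhood $U$ of the neutral element such that $u \in U$ implies $d_X(u^{-1}x, x) \leq \epsilon$. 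Now if $xy^{-1} \in U$, then writing $u = xy^{-1}$ and exploiting that the action preserves distances, one gets $d_X(x^{-1}x_0, y^{-1}x_0) = d_X(x_0, xy^{-1}x_0)$ for the relevant point, so for every $f \in \mathrm{Lip}_1^1(X)$,
\begin{displaymath}
	\left\lvert f\!\!\upharpoonright_{x}(x_1) - f\!\!\upharpoonright_{x}(x_2) \right\rvert = \left\lvert f(x_1^{-1}x) - f(x_2^{-1}x)\right\rvert \leq d_X(x_1^{-1}x, x_2^{-1}x) \leq \epsilon
\end{displaymath}
whenever $x_1 x_2^{-1} \in U$, using $1$-Lipschitzness of $f$ and invariance of $d_X$ under the isometric action. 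This is the one step where the hypotheses (continuity and isometry) are genuinely used, and I expect it to be the main obstacle: the bookkeeping with left versus right translations and the placement of inverses must be handled carefully so that the neighbourhood $U$ controlling distances on $X$ translates into the correct right-uniform condition $x_1 x_2^{-1} \in U$ on $G$.

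Next I would check convexity and right-translation closedness, both of which are essentially formal. Convexity follows because $f \mapsto f\!\!\upharpoonright_x$ is linear in $f$ and $\mathrm{Lip}_1^1(X)$ is convex (a convex combination of $1$-Lipschitz functions bounded by $1$ is again such). For right-translation closedness, observe that $(f\!\!\upharpoonright_x) \circ \rho_g$ sends $h \mapsto f((hg)^{-1}x) = f(g^{-1}(h^{-1}x))$; defining $f' \defeq f \circ (g^{-1}\cdot{-})$, i.e.\ the translate of $f$ by the isometry $g^{-1}$, one has $f' \in \mathrm{Lip}_1^1(X)$ since $g^{-1}$ acts isometrically, and $(f\!\!\upharpoonright_x)\circ\rho_g = f'\!\!\upharpoonright_x \in \mathrm{L}_x(G,X)$.

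Finally I would prove compactness with respect to the topology of pointwise convergence, and then invoke Corollary~\ref{corollary:introverted} to conclude introvertedness. The set $\mathrm{Lip}_1^1(X) \subseteq [-1,1]^X$ is pointwise closed (each defining inequality $\lvert f(a) - f(b)\rvert \leq d_X(a,b)$ and $\lvert f(a)\rvert \leq 1$ is preserved under pointwise limits) and contained in the compact product $[-1,1]^X$, hence is compact in the product topology by Tychonoff. The map $\mathrm{Lip}_1^1(X) \to \mathbb{R}^G,\ f \mapsto f\!\!\upharpoonright_x$ is continuous for the respective topologies of pointwise convergence, since for each fixed $g \in G$ the evaluation $f \mapsto f\!\!\upharpoonright_x(g) = f(g^{-1}x)$ is a coordinate projection; therefore its image $\mathrm{L}_x(G,X)$ is a continuous image of a compact space, hence compact, and in particular closed in the topology of pointwise convergence. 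Having verified that $\mathrm{L}_x(G,X)$ is right-translation closed, convex, and pointwise closed, Corollary~\ref{corollary:introverted} immediately yields that it is introverted, completing the proof.
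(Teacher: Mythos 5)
Your proposal is correct and follows essentially the same route as the paper: the same neighbourhood $U$ controlling $d_X(x,ux)$ for right-uniform equicontinuity, the same identification $(f\!\!\upharpoonright_{x})\circ\rho_{g} = (f\circ\tau_{g})\!\!\upharpoonright_{x}$ for right-translation closedness, and the same use of compactness of $\mathrm{Lip}_{1}^{1}(X)$ together with continuity and linearity of $f\mapsto f\!\!\upharpoonright_{x}$, concluding via Corollary~\ref{corollary:introverted}. The only difference is cosmetic (your notation $x_{1},x_{2}$ for group elements clashes a little with $x\in X$, but the computation is the one intended).
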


\begin{proof} Let $x \in X$. Evidently, $\mathrm{L}_{x}(G,X)$ is $\Vert \cdot \Vert_{\infty}$-bounded. To show that $\mathrm{L}_{x}(G,X)$ is right-uniformly equicontinuous, let us consider any $\epsilon > 0$. Then $U \defeq \{ g \in G \mid d_{X}(x,gx) \leq \epsilon \}$ constitutes a neighborhood of the neutral element in $G$. If $g,h \in G$ and $gh^{-1} \in U$, then \begin{displaymath}
	\vert {f\!\!\upharpoonright_{x}}(g) - {f\!\!\upharpoonright_{x}}(h) \vert \, = \, \left\lvert f\!\left(g^{-1}x\right) - f\!\left(h^{-1}x\right) \right\rvert \, \leq \, d_{X}\!\left(g^{-1}x,h^{-1}x\right) \, = \, d_{X}(x,gh^{-1}x) \, \leq \, \epsilon
\end{displaymath} for all $f \in \mathrm{Lip}_{1}^{1}(X)$. So, $\mathrm{L}_{x}(G,X) \in \mathrm{RUEB}(G)$ as desired. To conclude, we note that the~map \begin{displaymath}
	T_{x} \colon \, \mathrm{Lip}^{\infty}(X) \, \longrightarrow\,  \mathrm{RUCB}(G), \quad f \, \longmapsto \, {f\!\!\upharpoonright_{x}}
\end{displaymath} is linear and continuous with regard to the respective topologies of pointwise convergence. Consequently, $\mathrm{L}_{x}(G,X) = T_{x}\!\left( \mathrm{Lip}_{1}^{1}(X) \right)$ is convex and compact. Furthermore, for all $g \in G$ and $f \in \mathrm{Lip}_{1}^{1}(X)$, note that $ {f\!\!\upharpoonright_{x}} \circ \rho_{g} = {(f \circ \tau_{g})\!\!\upharpoonright_{x}} \in \mathrm{L}_{x}(G,X)$, where $\tau_{g} \colon X \to X, \, y \mapsto g^{-1}y $. Thus, $\mathrm{L}_{x}(G,X)$ is right-translation closed, hence introverted by Corollary~\ref{corollary:introverted}. \end{proof}

For convenience, let us recall the following well-known fact.

\begin{lem}\label{lemma:lipschitz.approximation} Let $S$ be a set and let $X$ be a metric space. Let $\phi \colon S \to X$ and $\ell, \epsilon \in \mathbb{R}_{\geq 0}$. For any bounded $f \colon S \to \mathbb{R}$, \begin{displaymath}
	\bigl( \, \forall s,t \in S \colon \ \vert f(s) - f(t) \vert \, \leq \, \ell d_{X}(\phi(s),\phi(t)) + \epsilon \, \bigr) \ \, \Longrightarrow \ \, \bigl( \, \exists F \in \mathrm{Lip}_{\ell}^{\infty}(X) \colon \ \Vert f - F \circ \phi \Vert_{\infty} \, \leq \, \epsilon \, \bigr) .
\end{displaymath} \end{lem}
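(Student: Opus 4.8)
The plan is to produce $F$ by a McShane--Whitney inf-convolution and then truncate to restore boundedness. The degenerate case $S = \emptyset$ is vacuous (take $F \defeq 0$), so I assume $S \neq \emptyset$ and put $M \defeq \Vert f \Vert_{\infty} < \infty$. I would then define
\[
	F \colon X \to \mathbb{R}, \qquad F(x) \defeq \inf\nolimits_{s \in S} \bigl( f(s) + \ell\, d_{X}(\phi(s),x) \bigr) .
\]
Since $f(s) \geq -M$ and $\ell\, d_{X} \geq 0$, every term in the infimum is $\geq -M$, so $F(x) \geq -M > -\infty$; and evaluating at any fixed $s_{0} \in S$ shows $F(x) < \infty$. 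Thus $F$ is well defined and bounded below. That $F \in \mathrm{Lip}_{\ell}(X)$ is the standard one-line triangle-inequality argument: for $x,y \in X$ and $s \in S$ one has $d_{X}(\phi(s),x) \leq d_{X}(\phi(s),y) + d_{X}(x,y)$, whence $F(x) \leq F(y) + \ell\, d_{X}(x,y)$, and symmetrically in $x,y$.

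The key estimate is the two-sided bound on $\phi(S)$. For $t \in S$, taking $s = t$ in the infimum gives $F(\phi(t)) \leq f(t)$. Conversely, the hypothesis rearranges to $\ell\, d_{X}(\phi(s),\phi(t)) \geq f(t) - f(s) - \epsilon$ for every $s \in S$, so that $f(s) + \ell\, d_{X}(\phi(s),\phi(t)) \geq f(t) - \epsilon$; taking the infimum over $s$ yields $F(\phi(t)) \geq f(t) - \epsilon$. Hence $f(t) - \epsilon \leq F(\phi(t)) \leq f(t)$ for every $t \in S$, i.e.\ $\Vert f - F \circ \phi \Vert_{\infty} \leq \epsilon$.

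It remains to make $F$ bounded, which is where the only real subtlety lies: the inf-convolution is bounded below by $-M$ but need not be bounded above away from $\phi(S)$. I would therefore truncate, setting $F' \defeq \min(F, M)$. Capping with a constant is a $1$-Lipschitz postcomposition, so $F' \in \mathrm{Lip}_{\ell}(X)$, and now $-M \leq F' \leq M$, giving $F' \in \mathrm{Lip}_{\ell}^{\infty}(X)$. Finally, on $\phi(S)$ the upper bound $F(\phi(t)) \leq f(t) \leq M$ shows $F'(\phi(t)) = F(\phi(t))$, so $F' \circ \phi = F \circ \phi$ and the approximation $\Vert f - F' \circ \phi \Vert_{\infty} \leq \epsilon$ is preserved. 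This $F'$ witnesses the claim.
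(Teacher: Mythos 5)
Your proposal is correct and is essentially the paper's own proof: the paper defines $F(x) \defeq \left( \inf_{s \in S} f(s) + \ell d_{X}(\phi(s),x) \right) \wedge r$ with $r = \Vert f \Vert_{\infty}$, which is exactly your inf-convolution followed by truncation at $M$, and the two-sided estimate on $\phi(S)$ is argued the same way. You merely spell out the well-definedness, Lipschitz, and truncation steps that the paper leaves implicit.
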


\begin{proof} Let $r \defeq \Vert f \Vert_{\infty}$. Define $F \colon X \to \mathbb{R}$ by \begin{displaymath}
	F(x) \, \defeq \, \left( \inf\nolimits_{s \in S} f(s) + \ell d_{X}(\phi(s),x) \right) \wedge r \qquad (x \in X) .
\end{displaymath} Note that $F \in \mathrm{Lip}_{\ell}^{r}(X)$. Evidently, $F(\phi(s)) \leq f(s) + \ell d(\phi(s),\phi(s)) = f(\phi(s))$ for every~$s \in S$. Also, if $s \in S$, then $f(s) \leq f(t) + \ell d(\phi(t),\phi(s)) + \epsilon$ for all $t \in S$. Hence, $\Vert f - F\circ \phi \Vert_{\infty} \leq \epsilon$. \end{proof}

Given any metric space $X$ and $n \in \mathbb{N}$, let us consider the metric space $X^{n}$ carrying the usual supremum metric defined by \begin{displaymath}
	d_{X^{n}}(x,y) \, \defeq \, \sup \{ d_{X}(x_{i},y_{i}) \mid i \in \mathbb{N}, \, 1 \leq i \leq n \} \qquad \left( x,y \in X^{n} \right) .
\end{displaymath} Evidently, if $G$ is a topological group acting continuously by isometries on $X$, then $G$ admits a continuous isometric action upon $X^{n}$ given by $gx \defeq (gx_{1},\ldots,gx_{n})$ for all $g \in G$ and $x \in X^{n}$.

\begin{lem}\label{lemma:subgroup.density} Let $X$ be a metric space. If $G$ is any topological subgroup of $\mathrm{Iso}(X)$, then $\mathrm{L}(G,X) \defeq \bigcup \{ \mathrm{L}_{x}(G,X^{n}) \mid x \in X^{n}, \, n \in \mathbb{N} \}$ generates a dense linear subspace of $\mathrm{RUCB}(G)$. \end{lem}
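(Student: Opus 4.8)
The plan is to show that every $f \in \mathrm{RUCB}(G)$ can be $\Vert \cdot \Vert_{\infty}$-approximated by a finite complex linear combination of functions drawn from the sets $\mathrm{L}_{\mathbf{x}}(G,X^{n})$. Since these sets consist of real-valued functions and $\mathrm{RUCB}(G)$ is spanned over $\mathbb{C}$ by its real-valued members, it suffices to treat a fixed \emph{real-valued} $f \in \mathrm{RUCB}(G)$ and a fixed $\epsilon > 0$, and to produce an element of the $\mathbb{R}$-linear span of $\mathrm{L}(G,X)$ within $\epsilon$ of $f$ in supremum norm.

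First I would exploit that $G$ carries the topology of pointwise convergence as a subgroup of $\mathrm{Iso}(X)$. By right-uniform continuity of $f$ there is a neighborhood of the neutral element on which $f$ varies by at most $\epsilon$; shrinking it to a basic neighborhood, I obtain finitely many points $\mathbf{x} = (x_{1},\ldots,x_{n}) \in X^{n}$ and a $\delta > 0$ such that $gh^{-1} \in \{ u \in G \mid d_{X}(ux_{i},x_{i}) < \delta, \, 1 \leq i \leq n \}$ implies $\vert f(g)-f(h) \vert \leq \epsilon$. I then introduce the orbit map $\phi \colon G \to X^{n}$, $\phi(g) \defeq g^{-1}\mathbf{x}$, and use invariance of $d_{X}$ under the isometries $g,h$ to compute $d_{X^{n}}(\phi(g),\phi(h)) = \max_{1 \leq i \leq n} d_{X}(x_{i}, gh^{-1}x_{i})$, so that the displayed neighborhood condition is \emph{exactly} $d_{X^{n}}(\phi(g),\phi(h)) < \delta$. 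Combining the small-distance case with the crude bound $\vert f(g)-f(h) \vert \leq 2\Vert f \Vert_{\infty}$ in the large-distance case yields the global estimate $\vert f(g)-f(h) \vert \leq \ell \, d_{X^{n}}(\phi(g),\phi(h)) + \epsilon$ for all $g,h \in G$, where $\ell \defeq 2\Vert f \Vert_{\infty}/\delta$ (the case $\Vert f \Vert_{\infty} = 0$ being trivial).

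At this point Lemma~\ref{lemma:lipschitz.approximation}, applied with $S = G$, the metric space $X^{n}$, the map $\phi$, and the constants $\ell,\epsilon$, produces $F \in \mathrm{Lip}_{\ell}^{\infty}(X^{n})$ with $\Vert f - F\circ\phi \Vert_{\infty} \leq \epsilon$. Since $(F \circ \phi)(g) = F(g^{-1}\mathbf{x}) = {F\!\!\upharpoonright_{\mathbf{x}}}(g)$, this approximant is ${F\!\!\upharpoonright_{\mathbf{x}}}$. It remains to place it in the span of $\mathrm{L}(G,X)$: setting $c \defeq \max\{ \ell, \Vert F \Vert_{\infty} \}$ (with $c = 0$ again trivial), the function $c^{-1}F$ is $1$-Lipschitz and bounded by $1$, i.e.\ $c^{-1}F \in \mathrm{Lip}_{1}^{1}(X^{n})$, so $(c^{-1}F)\!\!\upharpoonright_{\mathbf{x}} \in \mathrm{L}_{\mathbf{x}}(G,X^{n}) \subseteq \mathrm{L}(G,X)$, whence ${F\!\!\upharpoonright_{\mathbf{x}}} = c\,(c^{-1}F)\!\!\upharpoonright_{\mathbf{x}}$ lies in the linear span of $\mathrm{L}(G,X)$. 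As $\epsilon$ was arbitrary, the real-valued $f$ is approximated, and the complex case follows by treating real and imaginary parts separately.

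I expect the only genuinely delicate step to be the translation of right-uniform continuity into the single orbit map $\phi$ together with the Lipschitz-up-to-$\epsilon$ estimate: one must use the pointwise-convergence topology to extract the \emph{finite} tuple $\mathbf{x}$, and then verify that isometry invariance of the metric makes the right-uniform entourage coincide with a metric ball under $\phi$ in $X^{n}$. Once this correspondence is in place, the remaining manipulations—the rescaling into $\mathrm{Lip}_{1}^{1}(X^{n})$ and the invocation of Lemma~\ref{lemma:lipschitz.approximation}—are routine, as is the reduction from complex- to real-valued functions.
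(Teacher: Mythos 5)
Your proposal is correct and follows essentially the same route as the paper's proof: reduce to real-valued $f$, use right-uniform continuity in the pointwise-convergence topology to extract a finite tuple $\mathbf{x} \in X^{n}$ and the estimate $\vert f(g)-f(h)\vert \leq \ell\, d_{X^{n}}(g^{-1}\mathbf{x},h^{-1}\mathbf{x}) + \epsilon$ with $\ell = 2\delta^{-1}\Vert f\Vert_{\infty}$, then invoke Lemma~\ref{lemma:lipschitz.approximation} and rescale the resulting $F$ into $\mathrm{Lip}_{1}^{1}(X^{n})$. The only cosmetic difference is that you spell out the rescaling constant $c$ explicitly where the paper simply notes that $\mathrm{Lip}_{\ell}^{\infty}(X^{n})$ lies in the span of $\mathrm{Lip}_{1}^{1}(X^{n})$.
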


\begin{proof} Of course, it suffices to check that $\mathrm{RUCB}(G) \cap \mathbb{R}^{G}$ is contained in the norm-closure of the linear subspace of $\mathrm{RUCB}(G)$ generated by $\mathrm{L}(G,X)$. So, let $f \colon G \to \mathbb{R}$ be bounded and right-uniformly continuous. Consider any $\epsilon > 0$. Thanks to right-uniform continuity, there exist $\delta > 0$, $n \in \mathbb{N}$, and $x \in X^{n}$ such that \begin{displaymath}
	\forall g,h \in G \colon \qquad d_{X^{n}}\!\left(g^{-1}x,h^{-1}x\right) \, = \, d_{X^{n}}\!\left(x,gh^{-1}x\right) \, \leq \, \delta \quad \Longrightarrow \quad \vert f(g) - f(h) \vert \, \leq \, \epsilon .
\end{displaymath} Let $\ell \defeq 2\delta^{-1}\Vert f \Vert_{\infty}$. We deduce that \begin{displaymath}
	\vert f(g) - f(h) \vert \, \leq \, \max \left\{ \epsilon, \, \ell d_{X^{n}}\!\left(g^{-1}x,h^{-1}x\right) \right\} \, \leq \, \ell d_{X^{n}}\!\left(g^{-1}x,h^{-1}x\right) + \epsilon
\end{displaymath} for all $g,h \in G$. Due to Lemma~\ref{lemma:lipschitz.approximation}, we find $F \in \mathrm{Lip}_{\ell}^{\infty}(X^{n})$ with $\sup_{g \in G} \left\lvert f(g) - F\!\left( g^{-1}x \right) \right\rvert \leq \epsilon$, i.e., $\Vert f - {F\!\!\upharpoonright_{x}} \Vert_{\infty} \leq \epsilon$. Of course, being a member of $\mathrm{Lip}_{\ell}^{\infty}(X^{n})$, the function $F$ is contained in the linear subspace of the real vector space $\mathbb{R}^{X^{n}}$ generated by $\mathrm{Lip}_{1}^{1}(X^{n})$, whence ${F\!\!\upharpoonright_{x}}$ belongs to the linear span of $\mathrm{L}_{x}(G,X)$ inside the real vector space $\mathrm{RUCB}(G)$. This proves that $f$ is contained in the norm-closure of the linear span of $\mathrm{L}(G,X)$ in $\mathrm{RUCB}(G)$, as desired. \end{proof}

\begin{cor}\label{corollary:isometry.groups} Let $X$ be a metric space and let $G$ be any topological subgroup of $\mathrm{Iso}(X)$. Then $G$ is amenable if and only if \begin{displaymath}
	\forall n \geq 1 \ \forall x \in X^{n} \ \forall f \in \mathrm{Lip}_{1}^{1}(X^{n}) \ \exists \mu \in \mathrm{M}(G) \ \forall g \in G \colon \quad \mu ({f\!\!\upharpoonright_{x}} \circ \lambda_{g}) = \mu ({f\!\!\upharpoonright_{x}}) .
\end{displaymath} \end{cor}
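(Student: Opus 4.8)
The plan is to derive Corollary~\ref{corollary:isometry.groups} as a direct application of the amenability test in Corollary~\ref{corollary:amenability.test}, using the structural facts about the sets $\mathrm{L}_{x}(G,X^{n})$ already established. The forward implication is trivial: if $G$ is amenable, then $\mathrm{M}(G)$ admits a $G$-fixed point $\mu$, and this single $\mu$ satisfies $\mu(h \circ \lambda_{g}) = \mu(h)$ for every $h \in \mathrm{RUCB}(G)$ and $g \in G$, in particular for every $h = {f\!\!\upharpoonright_{x}}$ with $f \in \mathrm{Lip}_{1}^{1}(X^{n})$. So the content is entirely in the converse.

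For the converse, the idea is to take $H \defeq \mathrm{L}(G,X) = \bigcup \{ \mathrm{L}_{x}(G,X^{n}) \mid x \in X^{n}, \, n \in \mathbb{N} \}$ and verify that it satisfies the hypotheses of Corollary~\ref{corollary:amenability.test}, namely that $H$ is introverted and generates a dense linear subspace of $\mathrm{RUCB}(G)$. Density is exactly Lemma~\ref{lemma:subgroup.density}. For introvertedness, each individual set $\mathrm{L}_{x}(G,X^{n})$ is introverted by Lemma~\ref{lemma:metric.stabilizer.introverted} (applied to the continuous isometric action of $G$ on $X^{n}$), but I must be a little careful: Corollary~\ref{corollary:amenability.test} is stated for a single introverted set $H$, whereas $\mathrm{L}(G,X)$ is a union of introverted sets and need not itself be convex or closed. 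The cleanest route is to observe that Corollary~\ref{corollary:amenability.test} really only uses introvertedness through Lemma~\ref{lemma:amenability}, whose hypothesis is merely $\Phi_{\mu}(H) \subseteq H$ for all $\mu \in \mathrm{M}(G)$. This condition does hold for the union: if $h \in \mathrm{L}_{x}(G,X^{n})$ and $\mu \in \mathrm{M}(G)$, then $\Phi_{\mu}h \in \mathrm{L}_{x}(G,X^{n}) \subseteq \mathrm{L}(G,X)$ by introvertedness of each piece, so $\Phi_{\mu}(\mathrm{L}(G,X)) \subseteq \mathrm{L}(G,X)$.

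Thus the concrete plan is: assume the displayed testing condition holds; apply Lemma~\ref{lemma:amenability} with $\Sigma = \mathrm{M}(G)$ and $H = \mathrm{L}(G,X)$, whose hypothesis $\Phi_{\mu}(H) \subseteq H$ I have just verified, to swap the quantifiers and obtain a single $\mu \in \mathrm{M}(G)$ with $\mu({f\!\!\upharpoonright_{x}} \circ \lambda_{g}) = \mu({f\!\!\upharpoonright_{x}})$ for all $g \in G$ and all ${f\!\!\upharpoonright_{x}} \in \mathrm{L}(G,X)$; then invoke density of the linear span of $\mathrm{L}(G,X)$ (Lemma~\ref{lemma:subgroup.density}) together with continuity and linearity of $\mu$ to conclude $\mu(f \circ \lambda_{g}) = \mu(f)$ for all $f \in \mathrm{RUCB}(G)$ and $g \in G$; finally Corollary~\ref{corollary:lau}(1) yields amenability of $G$. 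This is precisely the combination packaged in Corollary~\ref{corollary:amenability.test}, so in the write-up I can simply cite that corollary once I have identified the correct introverted generating set.

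The main obstacle is the mismatch between the single-set formulation of Corollary~\ref{corollary:amenability.test} and the union-of-sets structure of $\mathrm{L}(G,X)$. The resolution is to note that the only property of $H$ truly needed is the invariance $\Phi_{\mu}(H) \subseteq H$ feeding into Lemma~\ref{lemma:amenability}, and this passes to unions of introverted sets without requiring $\mathrm{L}(G,X)$ to be convex or pointwise closed. Everything else is routine bookkeeping, since Lemma~\ref{lemma:metric.stabilizer.introverted} supplies introvertedness of the pieces and Lemma~\ref{lemma:subgroup.density} supplies density; no genuinely new estimate is required.
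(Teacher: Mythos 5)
Your proposal is correct and is essentially the paper's own proof: the paper also takes $H=\mathrm{L}(G,X)$, notes that a union of introverted sets is introverted, and concludes via Corollary~\ref{corollary:amenability.test} and Lemma~\ref{lemma:subgroup.density}. The ``obstacle'' you work around is not actually there --- being introverted is \emph{defined} as the condition $\Phi_{\mu}(H)\subseteq H$ for all $\mu\in\mathrm{M}(G)$ (convexity and pointwise closedness in Corollary~\ref{corollary:introverted} are only a sufficient criterion), so unions of introverted sets are introverted by definition and Corollary~\ref{corollary:amenability.test} applies directly.
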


\begin{proof} As any union of introverted subsets of $\mathrm{RUCB}(G)$ will be introverted as well, Lemma~\ref{lemma:metric.stabilizer.introverted} entails that $\mathrm{L}(G,X)$ is an introverted subset of $\mathrm{RUCB}(G)$. Hence, the desired statement follows from Corollary~\ref{corollary:amenability.test} and Lemma~\ref{lemma:subgroup.density}. \end{proof}

Subsequently, we reformulate the results above in terms of the Liouville property. Let us call a Borel probability measure $\mu$ on a topological group $G$ \emph{non-degenerate} if $\spt (\mu)$ generates a dense subsemigroup of $G$.

\begin{definition}\label{definition:liouville} Let $G$ be a topological group acting continuously by isometries on a metric space $X$ and let $\mu$ be a Borel probability measure on $G$. A bounded measurable function $f \colon X \to \mathbb{R}$ will be called \emph{$\mu$-harmonic} if $f(x) = \int f(gx) \, d\mu (g)$ for all $x \in X$. The action of~$G$ on $X$ is called \emph{$\mu$-Liouville} if every $\mu$-harmonic uniformly continuous bounded real-valued function on $X$ is constant, and the action is said to be \emph{Liouville} if it is $\nu$-Liouville for some non-degenerate, regular Borel probability measure $\nu$ on $G$. \end{definition}

In the special case of a discrete group acting on a set, the Liouville property introduced above coincides with the usual one, e.g., as considered in~\cite{JuschenkoZheng}.

\begin{remark}\label{remark:liouville} Let $G$ be a topological group acting continuously by isometries on a metric space $X$ and let $\mu$ be any Borel probability measure on $G$. \begin{enumerate}
	\item[$(1)$] The action of $G$ on $X$ is $\mu$-Liouville if and only if every $\mu$-harmonic member of $\mathrm{Lip}_{1}^{1}(X)$ is constant. This is because $\mathrm{Lip}_{1}^{1}(X)$ spans a dense linear subspace in the Banach space $\mathrm{UCB}(X,\mathbb{R})$ of all uniformly continuous, bounded real-valued functions, equipped with the pointwise operations and the supremum norm, and \begin{displaymath}
					\qquad \qquad \ \mathrm{UCB}(X,\mathbb{R}) \, \longrightarrow \, \mathrm{UCB}(X,\mathbb{R}), \quad f \, \longmapsto \, \left(x \mapsto \int f(gx) \, d\mu(g)\right)
				\end{displaymath} is a bounded linear operator.
	\item[$(2)$] As $G$ acts isometrically on $X$, the set $\! \left. X/\!\!/G \defeq \left\{ \overline{Gx} \, \right\vert x \in X \right\}$ forms a partition of~$X$. Moreover, $X/\!\!/G$ admits a well-defined metric given \begin{displaymath}
		\qquad \qquad \ d_{X/\!\!/G} \! \left( \overline{Gx}, \overline{Gy} \right) \, \defeq \, \inf\nolimits_{g \in G} d_{X}(x,gy) \, = \, \inf\nolimits_{g \in G} d_{X}(gx,y) \qquad (x,y \in X) .
	\end{displaymath} For every $f \in \mathrm{UCB}(X/\!\!/G,\mathbb{R})$, the map $X \to \mathbb{R}, \, x \mapsto f(\overline{Gx})$ is a $\mu$-harmonic member of $\mathrm{UCB}(X,\mathbb{R})$. So, if the action of $G$ on $X$ is $\mu$-Liouville, then $\overline{Gx} = X$ for all $x \in X$.
\end{enumerate} \end{remark}

We will relate the Liouville property defined above to the study of harmonic functions on topological groups. Given any Borel probability measure $\mu$ on a topological group $G$, let us consider the push-forward measure $\mu^{\star} \defeq \iota_{\ast}(\mu)$ along the map $\iota \colon G \to G, \, g \mapsto g^{-1}$. 

\begin{lem}\label{lemma:isometric.action.harmonicity} Let $G$ be a topological group acting continuously by isometries upon a metric space $X$ and let $\mu$ be a Borel probability measure on $G$. A bounded measurable function $f \colon X \to \mathbb{R}$ is $\mu$-harmonic if and only if ${f\!\!\upharpoonright_{x}} \colon G \to \mathbb{R}$ is $\mu^{\star}$-harmonic for every $x \in X$. \end{lem}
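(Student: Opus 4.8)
The plan is to reduce both harmonicity conditions to explicit integral identities and then to observe that they are carried into one another by the two inversions built into the statement --- the one in the definition of ${f\!\!\upharpoonright_{x}}$ and the one defining $\mu^{\star}$. Reading $\mu^{\star}$-harmonicity of a bounded measurable $h \colon G \to \mathbb{R}$ as the identity $h = \Phi_{\mu^{\star}}h$, i.e.\ $h(g) = \int_{G} h(gy)\, d\mu^{\star}(y)$ for all $g \in G$ (the operator $\Phi_{\mu^{\star}}$ making sense on bounded measurable functions via this integral), the claim "${f\!\!\upharpoonright_{x}}$ is $\mu^{\star}$-harmonic'' becomes ${f\!\!\upharpoonright_{x}}(g) = \int_{G} {f\!\!\upharpoonright_{x}}(gy)\, d\mu^{\star}(y)$, whereas $\mu$-harmonicity of $f$ reads $f(z) = \int_{G} f(az)\, d\mu(a)$ for all $z \in X$. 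First I would record that all these integrals are well defined: since $f$ is bounded and Borel and the action $G \times X \to X$ is continuous, the maps $a \mapsto f(az)$ and $y \mapsto {f\!\!\upharpoonright_{x}}(gy) = f\!\left(y^{-1}g^{-1}x\right)$ are bounded Borel functions on $G$.

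The key computation is the following. Fixing $x \in X$ and $g \in G$ and writing out ${f\!\!\upharpoonright_{x}}(gy) = f\!\left((gy)^{-1}x\right) = f\!\left(y^{-1}g^{-1}x\right)$, I would apply the defining change of variables for the push-forward measure, namely $\int_{G} \phi \, d\mu^{\star} = \int_{G} \phi(a^{-1})\, d\mu(a)$ for $\mu^{\star} = \iota_{\ast}(\mu)$, to obtain
\[
	\Phi_{\mu^{\star}}({f\!\!\upharpoonright_{x}})(g) \, = \, \int_{G} f\!\left(y^{-1}g^{-1}x\right) d\mu^{\star}(y) \, = \, \int_{G} f\!\left(a\,g^{-1}x\right) d\mu(a) .
\]
Since ${f\!\!\upharpoonright_{x}}(g) = f(g^{-1}x)$, the equation ${f\!\!\upharpoonright_{x}}(g) = \Phi_{\mu^{\star}}({f\!\!\upharpoonright_{x}})(g)$ is thus exactly $f(g^{-1}x) = \int_{G} f(a\,g^{-1}x)\, d\mu(a)$. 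In other words, the $\mu^{\star}$-harmonicity equation for ${f\!\!\upharpoonright_{x}}$ at the group element $g$ coincides with the $\mu$-harmonicity equation for $f$ evaluated at the point $g^{-1}x \in X$.

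It then remains to match up the quantifiers. For the forward implication, assuming $f$ is $\mu$-harmonic, I would fix $x$ and $g$ and apply the $\mu$-harmonicity identity at $z = g^{-1}x$, which yields precisely ${f\!\!\upharpoonright_{x}}(g) = \Phi_{\mu^{\star}}({f\!\!\upharpoonright_{x}})(g)$; as $g$ is arbitrary, ${f\!\!\upharpoonright_{x}}$ is $\mu^{\star}$-harmonic for every $x$. For the converse, given $z \in X$ I would apply the hypothesis to $x \defeq z$ and the group element $g \defeq e$, so that $g^{-1}x = z$ and the displayed equation becomes $f(z) = \int_{G} f(az)\, d\mu(a)$; as $z$ is arbitrary, $f$ is $\mu$-harmonic. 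The only genuinely delicate point, which is also where I expect the one real subtlety to sit, is keeping the two inversions straight: it is exactly their interaction that turns the left-action convolution $\int f(az)\, d\mu(a)$ on $X$ into the right convolution $h \mapsto \Phi_{\mu^{\star}}h$ on $G$, and so reconciles the two notions of harmonicity (Definition~\ref{definition:harmonicity} and Definition~\ref{definition:liouville}). Once the change of variables above is set up correctly, everything else is a direct substitution.
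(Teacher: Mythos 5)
Your proposal is correct and takes essentially the same route as the paper: both arguments hinge on the identity ${f\!\!\upharpoonright_{x}}(gy) = f\!\left(y^{-1}g^{-1}x\right)$ together with the push-forward change of variables $\int \phi \, d\mu^{\star} = \int \phi\!\left(a^{-1}\right) d\mu(a)$, which turns the $\mu^{\star}$-harmonicity equation for ${f\!\!\upharpoonright_{x}}$ at $g$ into the $\mu$-harmonicity equation for $f$ at $g^{-1}x$. The paper merely packages the same computation as a chain of equivalences with the universal quantifiers carried along, whereas you match the quantifiers at the end; the substance is identical.
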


\begin{proof} Let $f \in \mathbb{R}^{X}$ be bounded, measurable. As $f(gh^{-1}x) = f((hg^{-1})^{-1}x) = {f\!\!\upharpoonright_{x}}(hg^{-1})$ for all $g,h \in G$ and $x \in X$, it follows that \begin{align*}
	f \ \text{is $\mu$-harmonic} \quad & \Longleftrightarrow \quad \forall x \in X \colon \ \, f(x) = \int f(gx) \, d\mu(g) \\
	& \Longleftrightarrow \quad \forall x \in X \ \forall h \in G \colon \ \, f(h^{-1}x) = \int f(gh^{-1}x) \, d\mu(g) \\
	& \Longleftrightarrow \quad \forall x \in X \ \forall h \in G \colon \ \, {f\!\!\upharpoonright_{x}}(h) = \int {f\!\!\upharpoonright_{x}}(hg^{-1}) \, d\mu(g) \\
	& \Longleftrightarrow \quad \forall x \in X \ \forall h \in G \colon \ \, {f\!\!\upharpoonright_{x}}(h) = \int {f\!\!\upharpoonright_{x}}(hg) \, d\mu^{\star}(g) \\
	& \Longleftrightarrow \quad \forall x \in X \colon \ \, {f\!\!\upharpoonright_{x}} \ \text{is $\mu^{\star}$-harmonic}. \qedhere
\end{align*} \end{proof}

Everything is prepared to prove the following characterization of amenability of isometry groups in terms of the Liouville property for their induced actions.

\begin{thm}\label{theorem:juschenko} Let $X$ be a separable metric space. A topological subgroup $G$ of $\mathrm{Iso}(X)$ is amenable if and only if, for all $n \in \mathbb{N}$ and $x \in X^{n}$, the action of $G$ on $Gx$ is Liouville. \end{thm}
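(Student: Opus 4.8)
The plan is to reduce both implications to a single dictionary lemma that translates the Liouville property of the $G$-action on an orbit $Gx \subseteq X^{n}$ into a statement about harmonic functions on the group $G$ itself, which can then be fed into the amenability criteria already established. First I record that, since $X$ is separable, $\mathrm{Iso}(X)$ carries a second-countable topology (an isometry is determined by, and its convergence detected on, a countable dense subset of $X$), so its subgroup $G$ is second-countable and in particular separable; hence every Borel probability measure on $G$ lies in $\Lambda(\mathrm{M}(G))$ by Proposition~\ref{proposition:topological.centre.pachl}, and regular ones do by Proposition~\ref{proposition:topological.centre}. Throughout I fix $n \in \mathbb{N}$ and $x \in X^{n}$ and regard $Y \defeq Gx$ as a separable metric space on which $G$ acts continuously by isometries, with distinguished base point $x \in Y$.

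The core step, where the real work sits, is the equivalence, for an arbitrary Borel probability measure $\nu$ on $G$,
\[
	\text{the action of } G \text{ on } Gx \text{ is } \nu\text{-Liouville} \quad \Longleftrightarrow \quad \mathrm{H}_{\nu^{\star}}(G) \cap \mathrm{L}_{x}(G,X^{n}) \subseteq \mathbb{C} .
\]
To establish it I would argue as follows. By Remark~\ref{remark:liouville}(1), applied to the metric space $Y = Gx$, the action is $\nu$-Liouville exactly when every $\nu$-harmonic $F \in \mathrm{Lip}_{1}^{1}(Gx)$ is constant. Applying Lemma~\ref{lemma:isometric.action.harmonicity} to $Y$ shows that $F$ is $\nu$-harmonic if and only if $F\!\!\upharpoonright_{y} \in \mathrm{H}_{\nu^{\star}}(G)$ for every $y \in Y$; writing $y = a^{-1}x$ and computing $F\!\!\upharpoonright_{a^{-1}x} = F\!\!\upharpoonright_{x} \circ \lambda_{a}$, together with the $\lambda$-invariance of $\mathrm{H}_{\nu^{\star}}(G)$ guaranteed by Lemma~\ref{lemma:convolution}(2), collapses this family of conditions to the single requirement $F\!\!\upharpoonright_{x} \in \mathrm{H}_{\nu^{\star}}(G)$. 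Finally, a truncated McShane extension (as produced in Lemma~\ref{lemma:lipschitz.approximation}) identifies $\{ F\!\!\upharpoonright_{x} \mid F \in \mathrm{Lip}_{1}^{1}(Gx) \}$ with $\mathrm{L}_{x}(G,X^{n})$, and surjectivity of the orbit map $G \to Gx, \, g \mapsto g^{-1}x$ makes $F$ constant precisely when $F\!\!\upharpoonright_{x}$ is; assembling these observations yields the displayed equivalence.

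With the dictionary in hand, the forward implication is short. If $G$ is amenable then, being second-countable, it admits by Corollary~\ref{corollary:main} (via Proposition~\ref{proposition:trivial.boundary}) a fully supported, regular Borel probability measure $\mu$ with $\mathrm{H}_{\mu}(G) = \mathbb{C}$. Put $\nu \defeq \mu^{\star}$; then $\nu$ is non-degenerate (it is even fully supported, inversion being a homeomorphism) and regular, and $\mathrm{H}_{\nu^{\star}}(G) = \mathrm{H}_{\mu}(G) = \mathbb{C}$, so the dictionary shows that the action of $G$ on $Gx$ is $\nu$-Liouville, hence Liouville, for every $n$ and $x$ simultaneously.

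For the converse, suppose each orbit action is Liouville; I verify the criterion of Corollary~\ref{corollary:isometry.groups}. Fix $n$, $x \in X^{n}$, and $f \in \mathrm{Lip}_{1}^{1}(X^{n})$. Choosing a non-degenerate regular $\nu$ witnessing that the action on $Gx$ is $\nu$-Liouville, the dictionary yields $\mathrm{H}_{\nu^{\star}}(G) \cap \mathrm{L}_{x}(G,X^{n}) \subseteq \mathbb{C}$. Since $\nu^{\star} \in \Lambda(\mathrm{M}(G))$ and $\mathrm{L}_{x}(G,X^{n})$ is introverted (Lemma~\ref{lemma:metric.stabilizer.introverted}), the implication (2)$\Rightarrow$(1) of Lemma~\ref{lemma:boundary.triviality}, taken with $H = \mathrm{L}_{x}(G,X^{n})$, produces $\nu' \in \Sigma_{\nu^{\star}}(G) \subseteq \mathrm{M}(G)$ with $\nu'(h \circ \lambda_{g}) = \nu'(h)$ for all $h \in \mathrm{L}_{x}(G,X^{n})$ and $g \in G$; taking $h = f\!\!\upharpoonright_{x}$ gives the required mean, and Corollary~\ref{corollary:isometry.groups} then delivers amenability. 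I expect the main obstacle to be the orbit-level bookkeeping inside the dictionary: matching base points (reducing the condition over all $y \in Gx$ to the single base point $x$ via $\lambda$-invariance and transitivity) and checking that passing to the possibly non-closed orbit $Gx$ does not disturb the applicability of Lemma~\ref{lemma:isometric.action.harmonicity} and Remark~\ref{remark:liouville}; the two amenability implications themselves are then essentially immediate.
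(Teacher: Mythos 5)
Your proposal is correct and follows essentially the same route as the paper: both directions rest on Lemma~\ref{lemma:isometric.action.harmonicity} together with Lemma~\ref{lemma:metric.stabilizer.introverted}, with the forward implication fed by Corollary~\ref{corollary:main} and the converse by Lemma~\ref{lemma:boundary.triviality} and Corollary~\ref{corollary:isometry.groups}. Your only organizational difference is packaging the translation between $\nu$-Liouville on $Gx$ and $\mathrm{H}_{\nu^{\star}}(G)\cap\mathrm{L}_{x}(G,X^{n})\subseteq\mathbb{R}$ as an explicit two-way dictionary (with the base-point reduction and McShane extension spelled out), which the paper leaves implicit.
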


\begin{proof} ($\Longrightarrow$) As $X$ is a separable metric space, $\mathrm{Iso}(X)$ is second-countable with respect to the topology of pointwise convergence, and hence is the topological subgroup $G$. Since $G$ is amenable, Corollary~\ref{corollary:main} asserts that $G$ admits a fully supported, regular Borel probability measure $\mu$ such that $\mathrm{H}_{\mu}(G) = \mathbb{C}$. It follows that $\mu^{\star}$ is a fully supported (thus non-degenerate), regular Borel probability measure on $G$. To conclude, let $n \in \mathbb{N}$ and~$x \in X^{n}$. If $f \in \mathrm{Lip}_{1}^{1}(Gx)$ is $\mu^{\star}$-harmonic, then ${f\!\!\upharpoonright_{x}} \in \mathrm{RUCB}(G)$ will be $\mu$-harmonic by Lemma~\ref{lemma:metric.stabilizer.introverted}, thus constant by assumption, so that $f$ will be constant, too. Hence, the action of $G$ upon $Gx$ is Liouville.
	
($\Longleftarrow$) We apply Corollary~\ref{corollary:isometry.groups} to deduce amenability. To this end, let $n \in \mathbb{N}$ and $x \in X^{n}$. According to our assumption, we find a non-degenerate, regular Borel probability measure $\mu$ on $G$ such that the action of~$G$ on $Gx$ is $\mu$-Liouville, and therefore $\mathrm{H}_{\mu^{\star}}(G) \cap \mathrm{L}_{x}(G,X^{n}) \subseteq \mathbb{R}$ by Lemma~\ref{lemma:isometric.action.harmonicity}. Furthermore, $\mathrm{L}_{x}(G,X^{n})$ is an introverted subset of $\mathrm{RUCB}(G)$ by Lemma~\ref{lemma:metric.stabilizer.introverted}. Hence, Lemma~\ref{lemma:boundary.triviality} asserts the existence of some $\nu \in \Sigma_{\mu^{\star}}(G)$ such that $\nu ({f\!\!\upharpoonright_{x}} \circ \lambda_{g}) = \nu ({f\!\!\upharpoonright_{x}})$ for all $f \in \mathrm{Lip}_{1}^{1}(X^{n})$ and $g \in G$. Thanks to Corollary~\ref{corollary:isometry.groups}, this shows that $G$ is amenable. \end{proof}

\section{The Liouville property for permutation groups}\label{section:thompson}

This section contains a brief discussion of consequences of our results for non-archimedean second-countable topological groups, i.e., those topologically isomorphic to a subgroup of the full symmetric group $\mathrm{Sym}(X)$, over a countable set $X$, equipped with the topology of pointwise convergence. The subsequent result follows immediately from Theorem~\ref{theorem:juschenko}.

\begin{cor}\label{corollary:juschenko.1} Let $X$ be a countable set. A topological subgroup $G$ of $\mathrm{Sym}(X)$ is amenable if and only if, for all $n \in \mathbb{N}$ and $x \in X^{n}$, the action of $G$ on $Gx$ is Liouville. \end{cor}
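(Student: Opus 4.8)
The plan is to realise $\mathrm{Sym}(X)$ as the full isometry group of $X$ with respect to a conveniently chosen metric and then to invoke Theorem~\ref{theorem:juschenko} essentially verbatim. The entire content of the corollary is the reduction of the permutation-group setting to the metric setting already treated.

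First I would endow the countable set $X$ with the discrete $0$-$1$ metric, i.e., $d_{X}(x,y) \defeq 1$ whenever $x \neq y$ and $d_{X}(x,x) \defeq 0$. Being countable, $(X,d_{X})$ is a separable metric space. The isometric self-bijections of $(X,d_{X})$ are precisely the bijections of $X$, whence $\mathrm{Iso}(X) = \mathrm{Sym}(X)$ as abstract groups. Moreover, since $X$ is discrete, convergence of a net $(g_{i})$ to $g$ in $\mathrm{Iso}(X)$ with respect to the topology of pointwise convergence means that $g_{i}x = gx$ eventually for each $x \in X$, which is exactly convergence in $\mathrm{Sym}(X)$. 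Hence the two topological groups coincide, and the given $G$ is a topological subgroup of $\mathrm{Iso}(X)$.

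Next I would observe that, for every $n \in \mathbb{N}$, the supremum metric $d_{X^{n}}$ is again the discrete $0$-$1$ metric on $X^{n}$. Consequently every bounded real-valued function on any subset of $X^{n}$ is uniformly continuous, so that the Liouville property of the action of $G$ on $Gx$ in the sense of Definition~\ref{definition:liouville} reduces to the requirement that every bounded $\mu$-harmonic function on $Gx$ be constant, which is the usual Liouville property for the permutation action as considered in the literature. With these identifications in place, the asserted equivalence is precisely Theorem~\ref{theorem:juschenko} applied to the separable metric space $(X,d_{X})$ and the topological subgroup $G \leq \mathrm{Iso}(X) = \mathrm{Sym}(X)$. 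There is no genuine obstacle here: the only points requiring (routine) verification are the coincidence of the pointwise-convergence topologies on $\mathrm{Iso}(X)$ and $\mathrm{Sym}(X)$ and the automatic uniform continuity of all bounded functions in the discrete setting, both of which are immediate from the choice of the $0$-$1$ metric.
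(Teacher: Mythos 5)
Your proof is correct and is exactly the reduction the paper intends: the paper states that Corollary~\ref{corollary:juschenko.1} ``follows immediately from Theorem~\ref{theorem:juschenko}'', and the implicit identification is precisely yours, namely equipping the countable set $X$ with the discrete $0$-$1$ metric so that $\mathrm{Iso}(X)=\mathrm{Sym}(X)$ as topological groups and every bounded function on $Gx\subseteq X^{n}$ is uniformly continuous. Your write-up simply makes explicit the routine verifications the paper leaves to the reader.
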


Given a group $G$ acting on a set $X$, one may consider the induced action of $G$ on the corresponding powerset $\mathscr{P}(X)$, defined via $gB \, \defeq \, \{ gx \mid x \in B \}$ for all $g \in G$ and $B \subseteq X$. Evidently, for every $n \in \mathbb{N}$, the set $\mathscr{P}\!_{n}(X)$ of all $n$-element subsets of $X$ then constitutes a $G$-invariant subset of $\mathscr{P}(X)$. We say that $G$ acts \emph{strongly transitively} on $X$ if, for every~$n \in \mathbb{N}$, the induced action of $G$ upon $\mathscr{P}\!_{n}(X)$ is transitive. Specializing Corollary~\ref{corollary:juschenko.1} to the case of groups of automorphisms of linearly ordered sets, we arrive at our next result. 

\begin{cor}\label{corollary:juschenko.2} Let $X$ be a countable set and let $G$ be a topological subgroup of $\mathrm{Sym}(X)$. Then the following hold. \begin{enumerate}
	\item[$(1)$] If the topological group $G$ is amenable, then the action of $G$ on $\{ gB \mid g \in G \}$ is Liouville for every finite subset $B \subseteq X$.
	\item[$(2)$] If $G$ preserves a linear order on $X$ and the action of $G$ on $\{ gB \mid g \in G \}$ is Liouville for every finite subset $B \subseteq X$, then the topological group $G$ is amenable.
\end{enumerate} \end{cor}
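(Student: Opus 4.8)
The plan is to derive both implications from Corollary~\ref{corollary:juschenko.1}, which characterizes amenability of a topological subgroup $G \leq \mathrm{Sym}(X)$ in terms of the Liouville property of the ordered-tuple orbits $Gx$ for $x \in X^{n}$. The bridge between those ordered orbits and the finite-subset orbits appearing in the present statement is the natural $G$-equivariant surjection
\[
	\pi \colon Gx \longrightarrow \{ gB \mid g \in G \}, \quad gx \longmapsto gB,
\]
where, given a finite $B \subseteq X$, one fixes an enumeration $B = \{ x_{1},\ldots,x_{n} \}$ and sets $x \defeq (x_{1},\ldots,x_{n}) \in X^{n}$. Throughout I regard $X$ as a metric space under the discrete $\{0,1\}$-metric, so that $\mathrm{Sym}(X) = \mathrm{Iso}(X)$, and equip $X^{n}$ and $\{ gB \mid g \in G \}$ with the induced (again discrete) metrics, with respect to which $\pi$ is isometric and $G$ acts by isometries. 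Since on a discrete metric space every bounded function is uniformly continuous, the Liouville property here simply demands that every bounded harmonic function be constant.

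For part~(1), assume $G$ is amenable and fix a finite $B \subseteq X$ with associated tuple $x$. By Corollary~\ref{corollary:juschenko.1} the action of $G$ on $Gx$ is $\mu$-Liouville for some non-degenerate, regular Borel probability measure $\mu$ on $G$, and I claim the same $\mu$ witnesses the Liouville property of the action on $\{ gB \mid g \in G \}$. Indeed, if $f$ is a bounded $\mu$-harmonic function on $\{ gB \mid g \in G \}$, then $G$-equivariance of $\pi$ gives $\int (f \circ \pi)(gy) \, d\mu(g) = \int f(g\pi(y)) \, d\mu(g) = f(\pi(y))$ for all $y \in Gx$, so $f \circ \pi$ is a bounded $\mu$-harmonic function on $Gx$, hence constant; as $\pi$ is surjective, $f$ is constant.

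For part~(2), assume $G$ preserves a linear order $\leq$ on $X$ and that every finite-subset orbit carries a Liouville action; I verify the hypothesis of Corollary~\ref{corollary:juschenko.1}. Fix $n \in \mathbb{N}$ and $x \in X^{n}$, and put $B \defeq \{ x_{1},\ldots,x_{n} \}$. The crucial point is that order-preservation upgrades $\pi$ to a bijection: if $gB = hB$, then comparing the $\leq$-increasing enumerations of the two sets (which $g$ and $h$ respect, being order-automorphisms) yields $g\vert_{B} = h\vert_{B}$ and hence $gx = hx$. Thus $\pi$ is a $G$-equivariant isometric bijection, i.e.\ an isomorphism of metric $G$-spaces $Gx \cong \{ gB \mid g \in G \}$, under which, for every Borel probability measure $\nu$ on $G$, the $\nu$-harmonic functions on the two spaces correspond. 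Since the target is Liouville by hypothesis, so is $Gx$; as $x$ was arbitrary, Corollary~\ref{corollary:juschenko.1} yields amenability of $G$.

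The main obstacle is the injectivity of $\pi$ in part~(2): this is precisely where the linear order is indispensable, since without it distinct tuples over the same underlying set typically lie in one and the same subset-orbit, and then the Liouville property of the coarser space $\{ gB \mid g \in G \}$ would not control harmonic functions on the finer space $Gx$. The remaining points—that $\pi$ is equivariant, surjective, and isometric for the discrete metrics, and that harmonicity transports along $\pi$—are routine.
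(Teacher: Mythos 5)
Your proof is correct and takes essentially the same route as the paper: both rest on the $G$-equivariant surjection from the tuple-orbit $Gx$ onto the subset-orbit $\{ gB \mid g \in G \}$, which reduces everything to Corollary~\ref{corollary:juschenko.1} and which order-preservation upgrades to a bijection for part~(2). (One harmless quibble: in part~(1) your map $\pi$ need not be isometric for the discrete metrics when it fails to be injective, but your argument never actually uses that claim, only equivariance and surjectivity.)
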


\begin{proof} For every $n \in \mathbb{N}$ and $x \in X^{n}$, the map \begin{displaymath}
	\phi_{x} \colon \, Gx \, \longrightarrow \, \{ g\{ x_{1},\ldots,x_{n}\} \mid g \in G \} , \quad y \, \longmapsto \, \{ y_{1},\ldots,y_{n} \} 
\end{displaymath} constitutes a $G$-equivariant surjection. Hence, (1) is a consequence of Corollary~\ref{corollary:juschenko.1}. Moreover, if $G$ preserves a linear order on $X$, then $\phi_{x}$ is a bijection for every $x \in X$, and therefore (2) also follows by Corollary~\ref{corollary:juschenko.1}. \end{proof}

Due to the seminal work of Kechris, Pestov and Todorcevic~\cite{KPT},  Corollary~\ref{corollary:juschenko.2} identifies structural Ramsey theory as a source of Liouville actions. More precisely, if $G$ is the automorphism group of an order Fra\"iss\'e structure on a countable set $X$ having the Ramsey property, then Corollary~\ref{corollary:juschenko.2} combined with~\cite[Theorem~4.7]{KPT} (see also~\cite[Corollary~6.6.18]{PestovBook}) asserts that the action of $G$ on $\{ gB \mid g \in G \}$ is Liouville for every finite $B \subseteq X$. A concrete example of a corresponding application concerning Thompson's group $F$ is given by Corollary~\ref{corollary:juschenko.3} below. We will formulate Corollary~\ref{corollary:juschenko.3} as a consequence of the following abstract result, which follows immediately from Corollary~\ref{corollary:juschenko.2} and Pestov's work on the extreme amenability of automorpism groups ultrahomogeneous linear orders~\cite{pestov}.

\begin{cor}\label{corollary:pestov} Let $X$ be a countable set and let $G$ be a subgroup of $\mathrm{Sym}(X)$. If the action of $G$ on $X$ is strongly transitive and preserves a linear order on $X$, then, for every~$n \in \mathbb{N}$, the action of $G$ on $\mathscr{P}\!_{n}(X)$ is Liouville. \end{cor}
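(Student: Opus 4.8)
The plan is to reduce the assertion to the amenability of $G$ as a topological group and then feed this into Corollary~\ref{corollary:juschenko.2}(1). Write $\le$ for the $G$-invariant linear order on $X$, so that $G$ is a subgroup of the closed subgroup $L \defeq \mathrm{Aut}(X,\le)$ of $\mathrm{Sym}(X)$ consisting of the order-preserving permutations, equipped with the topology of pointwise convergence. The degenerate cases are easy to dispose of first: if $\lvert X\rvert \le 1$ the conclusion is vacuous or trivial, and $2 \le \lvert X\rvert < \infty$ cannot occur, since a finite linear order is rigid (forcing $G=\{e\}$) whereas $G$ must act transitively on $X$; so I may assume $X$ is countably infinite, in which case $(X,\le)$ will turn out to have the order type of $\mathbb{Q}$.

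Next I would extract from strong transitivity two facts at once, both from a single bookkeeping observation. Given finite subsets $A=\{a_1<\dots<a_k\}$ and $B=\{b_1<\dots<b_k\}$ of $X$, transitivity of $G$ on $\mathscr{P}_k(X)$ supplies some $g\in G$ with $gA=B$ \emph{as sets}; since $g$ preserves $\le$, it must match the increasing enumerations, i.e.\ $g(a_i)=b_i$ for every $i$. This says precisely that every finite partial order-isomorphism of $(X,\le)$ extends to an element of $G$, so that $(X,\le)$ is an ultrahomogeneous linear order. Moreover, as the basic open neighbourhoods of $L$ in the pointwise topology are exactly the sets prescribed by finitely many constraints of the form $h(a_i)=b_i$, the same $g$ witnesses that every nonempty basic open subset of $L$ meets $G$; hence $\overline{G}=L$.

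Now by Pestov's theorem on the extreme amenability of automorphism groups of ultrahomogeneous linear orders~\cite{pestov}, the group $L=\mathrm{Aut}(X,\le)$ is extremely amenable, and in particular amenable. I would then transfer amenability across the dense inclusion $G\hookrightarrow L$: restriction defines a $G$-equivariant isometric ${}^{\ast}$-isomorphism $\mathrm{RUCB}(L)\to\mathrm{RUCB}(G)$, because every right-uniformly continuous bounded function $f$ on the dense subgroup $G$ extends uniquely to $L$ (for a net $g_i\to x$ in $L$ one has $g_ig_j^{-1}\to e$, so $(f(g_i))_i$ is Cauchy and its limit is well defined and again right-uniformly continuous). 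Under this isomorphism an $L$-invariant mean on $\mathrm{RUCB}(L)$ restricts to a $G$-invariant mean on $\mathrm{RUCB}(G)$, since $G$-invariance is a weakening of $L$-invariance; therefore $G$ is amenable. With amenability in hand, Corollary~\ref{corollary:juschenko.2}(1) gives, for each $n\in\mathbb{N}$ and each $n$-element subset $B\subseteq X$, that the action of $G$ on the orbit $\{gB\mid g\in G\}$ is Liouville; but strong transitivity identifies this orbit with the whole of $\mathscr{P}_n(X)$, yielding the claim.

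I expect the conceptual crux to be the transfer of amenability to the dense subgroup $G$ in the third step. This is exactly the mechanism that keeps the statement non-vacuous for ``large'' groups: Thompson's group $F$, for instance, is dense in the extremely amenable group $\mathrm{Aut}(\mathbb{Z}[\tfrac{1}{2}],\le)$ and is thereby amenable \emph{as a topological group}, even though its amenability as a discrete group is a notorious open problem. By comparison, the order-theoretic identification $\overline{G}=L$ and the reduction via Corollary~\ref{corollary:juschenko.2}(1) are routine.
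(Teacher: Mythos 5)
Your proposal is correct and follows the paper's strategy exactly: dispose of the degenerate finite cases, deduce amenability of the topological group $G$ from Pestov's theorem, and conclude via Corollary~\ref{corollary:juschenko.2}(1) together with strong transitivity identifying the orbit of an $n$-set with all of $\mathscr{P}_{n}(X)$. The only (harmless) difference is that you apply Pestov's theorem to the closure $\mathrm{Aut}(X,\le)$ and then transfer amenability to the dense subgroup $G$ via the restriction isomorphism of $\mathrm{RUCB}$-algebras, whereas the paper cites \cite[Theorem~5.4]{pestov} in a form that already applies to any group acting $\omega$-transitively by order-preserving bijections, hence to $G$ directly.
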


\begin{proof} Our hypotheses about the action imply that $X$ is either empty, a singleton set, or infinite. If $X = \emptyset$ or $\vert X \vert = 1$, then the desired conclusion is trivial. Now suppose that $X$ is infinite. Then Pestov's work~\cite[Theorem~5.4]{pestov} asserts that the topological group $G$, carrying the topology of pointwise convergence, is (extremely) amenable. As $G$ acts strongly transitively on $X$, an application Corollary~\ref{corollary:juschenko.2}(1) finishes the proof. \end{proof}

In order to explain the above-mentioned application of our results, let us finally turn to Richard Thompson's group \begin{displaymath}
	F \, \defeq \, \left\langle \sigma, \tau \left\vert \, \left[\sigma \tau^{-1}, \sigma^{-1}\tau \sigma \right] = \left[\sigma \tau^{-1}, \sigma^{-2}\tau \sigma^{2} \right] = e \right\rangle , \right.
\end{displaymath} which possesses an alternative presentation given by \begin{displaymath}
	F \, \cong \, \left\langle (\gamma_{n})_{n \in \mathbb{N}} \left\vert \, \forall m,n \in \mathbb{N}, \, m<n \colon \ \gamma_{m}^{-1}\gamma_{n}\gamma_{m} \, = \, \gamma_{n} \right\rangle \right.
\end{displaymath} and corresponding to the previous one via $\gamma_{0} = \sigma$ and $\gamma_{n} = \sigma^{1-n}\tau \sigma^{n-1}$ for every $n \in \mathbb{N}_{\geq 1}$. For general background on this group, the reader is referred to~\cite{CannonFloydParry}. In the following, we will be particularly concerned with two of its representations. First of all, let us recall that Thompson's group $F$ admits a natural embedding into the group $\mathrm{Homeo}_{+}[0,1]$ of orientation-preserving homeomorphisms of the real interval $[0,1]$ determined by \begin{align*}
	& \sigma (x) \, \defeq \, \begin{cases}
		\, \tfrac{x}{2} & \left( x \in \left[ 0, \tfrac{1}{2} \right] \right) , \smallskip \\
		\, x - \tfrac{1}{4} & \left( x \in \left[ \tfrac{1}{2} , \tfrac{3}{4} \right] \right) , \smallskip \\
		\, 2x -1 & \left( x \in \left[ \tfrac{3}{4} , 1 \right] \right) ,
	\end{cases} & \tau (x) \, \defeq \, \begin{cases}
		\, x & \left( x \in \left[ 0, \tfrac{1}{2} \right] \right) , \smallskip \\
		\, \tfrac{x}{2} + \tfrac{1}{4} & \left( x \in \left[ \tfrac{1}{2} , \tfrac{3}{4} \right] \right) , \smallskip \\
		\, x - \tfrac{1}{8} & \left( x \in \left[ \tfrac{3}{4} , \tfrac{7}{8} \right] \right) , \smallskip \\
		\, 2x -1 & \left( x \in \left[ \tfrac{7}{8} , 1 \right] \right) .
	\end{cases}
\end{align*} The image of $F$ under this embedding consists of those elements of $\mathrm{Homeo}_{+}[0,1]$ which are piecewise affine (with finitely many pieces) and have all their break points contained in $\mathbb{Z}\!\left[ \tfrac{1}{2} \right]$ and slopes contained in the set $\! \left. \left\{ 2^{k} \, \right\vert k \in \mathbb{Z} \right\}$. Furthermore, by work of Brin and Squier~\cite{BrinSquier}, Thompson's group $F$ embeds into the group $\mathrm{Homeo}_{+}(\mathbb{R})$ of orientation-preserving homeomorphisms of the real line via the action on $\mathbb{R}$ given by \begin{align*}
	& \sigma (x) \, \defeq \, x-1 \quad \, (x \in \mathbb{R}), & \tau (x) \, \defeq \, \begin{cases}
		\, x & (x \in (-\infty, x]) , \smallskip \\
		\, \tfrac{x}{2} & (x \in [0,2]) , \smallskip \\
		\, x - 1 & (x \in [2, \infty )).
	\end{cases}
\end{align*} The image of this second embedding consists of those members of $\mathrm{Homeo}_{+}(\mathbb{R})$ which, again, are piecewise affine (with finitely many pieces) and have all their break points inside $\mathbb{Z}\!\left[ \tfrac{1}{2} \right]$ and slopes inside $\! \left. \left\{ 2^{k} \, \right\vert k \in \mathbb{Z} \right\}$, and which moreover agree with translations by (possibly two different) integers on $(-\infty,a]$ and $[a,\infty)$ for a sufficiently large dyadic rational $a \in \mathbb{Z}\!\left[ \tfrac{1}{2} \right]$.

The two embeddings of Thompson's group $F$ introduced above are connected in a fairly natural way. As remarked in~\cite[Remark~2.5]{haagerup} (see also~\cite[2.C]{kaimanovich}), the piecewise affine map $\kappa \colon (0,1) \to \mathbb{R}$ given by \begin{displaymath}
	\kappa (x) \, \defeq \, \tfrac{x-t_{n}}{t_{n+1}-t_{n}} + n \qquad (x \in [t_{n},t_{n+1}], \, n \in \mathbb{Z}) ,
\end{displaymath} where $t_{n} \defeq 1 - \tfrac{1}{2^{n+1}}$ for $n \in \mathbb{N}$ and $t_{n} \defeq \tfrac{1}{2^{1-n}}$ for $n \in \mathbb{Z}\setminus \mathbb{N}$, constitutes an $F$-equivariant monotone bijection between $(0,1)$ and $\mathbb{R}$. Furthermore, $\kappa (D) = \mathbb{Z}\! \left[ \tfrac{1}{2} \right]$ for $D \defeq (0,1) \cap \mathbb{Z}\!\left[\tfrac{1}{2}\right]$, whence the restricted $F$-actions on $\mathbb{Z}\!\left[ \tfrac{1}{2} \right]$ and $D$ are isomorphic.

Since the considered actions of Thompson's group $F$ on $\mathbb{Z}\!\left[ \tfrac{1}{2} \right]$ and $D$ are strongly transitive and preserve the natural linear order, our Corollary~\ref{corollary:pestov} entails the following affirmative answer to a recent question by Juschenko~\cite{juschenko}, motivated by work of Kaimanovich~\cite{kaimanovich}.

\begin{cor}\label{corollary:juschenko.3} For all $n \in \mathbb{N}$, the action of $F$ on $\mathscr{P}\!_{n}\!\left(\mathbb{Z}\!\left[\tfrac{1}{2}\right]\right)$ (resp., $\mathscr{P}\!_{n}(D)$) is Liouville. \end{cor}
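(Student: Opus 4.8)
The plan is to derive Corollary~\ref{corollary:juschenko.3} directly from Corollary~\ref{corollary:pestov} by verifying that its two hypotheses hold for the action of $F$ on $\mathbb{Z}\!\left[\tfrac{1}{2}\right]$ (and, via the $F$-equivariant bijection $\kappa$, on $D$). Concretely, Corollary~\ref{corollary:pestov} applies to a subgroup $G \leq \mathrm{Sym}(X)$ provided the action is \emph{strongly transitive} and \emph{preserves a linear order}. So the task reduces to checking these two properties, after which the conclusion is immediate for every $n \in \mathbb{N}$.

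First I would observe that $F$ acts on $\mathbb{Z}\!\left[\tfrac{1}{2}\right]$ by orientation-preserving piecewise-affine homeomorphisms, so the usual linear order on the dyadic rationals is preserved; this handles the order-preservation hypothesis at once. The equivariance of $\kappa$ together with $\kappa(D) = \mathbb{Z}\!\left[\tfrac{1}{2}\right]$ transports this to the action on $D$, since $\kappa$ is a monotone bijection and hence an order-isomorphism intertwining the two actions. Thus both actions preserve a linear order.

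The substantive point is \emph{strong transitivity}: for each $n$, the induced action of $F$ on $\mathscr{P}\!_n\!\left(\mathbb{Z}\!\left[\tfrac{1}{2}\right]\right)$ must be transitive. Here I would use the fact that $F$, realized as piecewise-affine homeomorphisms of $[0,1]$ with dyadic breakpoints and powers-of-two slopes, acts highly transitively on the dyadic rationals in an order-respecting way. Given two $n$-element subsets $\{a_1 < \dots < a_n\}$ and $\{b_1 < \dots < b_n\}$ of $\mathbb{Z}\!\left[\tfrac{1}{2}\right]$, one builds an element of $F$ sending $a_i \mapsto b_i$ for each $i$ by prescribing it to be affine with dyadic slopes on each of the finitely many intervals cut out by the points, matching endpoints appropriately; the standard flexibility of $F$ (its action on consecutive dyadic intervals via the tree/pair-of-subdivisions picture) guarantees such an element exists. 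This is exactly the classical statement that $F$ acts transitively on finite ordered tuples of dyadics, which forces transitivity on $\mathscr{P}\!_n$. I expect this verification to be the main obstacle, though it is well known; the cleanest route is to cite the standard description of $F$ as the group of such piecewise-affine maps and invoke transitivity on ordered $n$-tuples, then pass to $D$ through $\kappa$.

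Having confirmed both hypotheses, the proof concludes by a direct appeal to Corollary~\ref{corollary:pestov}, which yields that for every $n \in \mathbb{N}$ the action of $F$ on $\mathscr{P}\!_n\!\left(\mathbb{Z}\!\left[\tfrac{1}{2}\right]\right)$ is Liouville; the isomorphism of the $F$-actions on $\mathbb{Z}\!\left[\tfrac{1}{2}\right]$ and $D$ induced by $\kappa$ then gives the same conclusion for $\mathscr{P}\!_n(D)$. Since Corollary~\ref{corollary:pestov} itself already packages the extreme amenability of $\mathrm{Aut}\!\left(\mathbb{Z}\!\left[\tfrac{1}{2}\right],{\leq}\right)$ (via Pestov) together with Corollary~\ref{corollary:juschenko.2}(1), no further work on amenability or boundaries is needed at this stage — the entire content of the statement is the two structural checks above.
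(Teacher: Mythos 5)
Your proposal is correct and follows exactly the route the paper takes: the paper's entire proof is the observation that the $F$-actions on $\mathbb{Z}\!\left[\tfrac{1}{2}\right]$ and on $D$ are strongly transitive and preserve the natural linear order, so Corollary~\ref{corollary:pestov} applies; you simply spell out the (standard, and correct) verification of strong transitivity via the piecewise-affine realization of $F$ and the transport through the equivariant order-isomorphism $\kappa$.
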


Furthermore, Corollary~\ref{corollary:juschenko.2} resolves another recent problem by Juschenko~\cite{juschenko}. 

\begin{problem}[\cite{juschenko}, Problem~9]\label{problem:juschenko} Let a group $G$ act faithfully on a countable set $X$ such that, for every $n \in \mathbb{N}$, the induced action of $G$ on $\mathscr{P}\!_{n}(X)$ is Liouville. Is $G$ amenable? \end{problem}

Our results entail that the solution to Problem~\ref{problem:juschenko} is negative, even if we require $G$ to be countable. Indeed, if $H$ is any amenable topological subgroup of $\mathrm{Sym}(X)$ acting strongly transitively on $X$ and containing a dense countable subgroup $G$ which is non-amenable as a discrete group (e.g.,~isomorphic to the free group $F_{2}$ on two generators), then, however, the density will imply that the action of $G$ on $X$ is strongly transitive and that the topological subgroup $G \leq H$ is amenable, whence by Corollary~\ref{corollary:juschenko.2}(1) the action of $G$ on $\mathscr{P}\!_{n}(X)$ will be Liouville for every~$n \in \mathbb{N}$. For instance, the topological group $\mathrm{Sym}(\mathbb{N})$ is amenable, acts strongly transitively on $\mathbb{N}$, and contains dense subgroups isomorphic to $F_{2}$~\cite{McDonough} (see also~\cite{dixon}). For another example, we note that the topological group $\mathrm{Aut}(\mathbb{Q},{\leq})$ is (even extremely) amenable~\cite{pestov}, acts strongly transitively on $\mathbb{Q}$~\cite[p.~140]{cameron}, and contains a dense subgroup isomorphic to $F_{2}$~\cite{GGS}.

On the other hand, we note that if a group $G$ acts faithfully on a countable set $X$ in such a way that the action preserves a linear order on $X$ and, for every $n \in \mathbb{N}$, the induced action of $G$ on $\mathscr{P}\!_{n}(X)$ is Liouville, then the \emph{topological group} $G$, carrying the subspace topology inherited from $\mathrm{Sym}(X)$, is indeed amenable. This is a consequence of our Corollary~\ref{corollary:juschenko.2}(2) (combined with Remark~\ref{remark:liouville}(2)).

\section*{Acknowledgments}

The first-named author acknowledges funding of the Excellence Initiative by the German Federal and State Governments as well as the Brazilian Conselho Nacional de Desenvolvimento Cient\'{i}fico e Tecnol\'{o}gico (CNPq), processo 150929/2017-0. The second-named author acknowledges funding by the ERC Consolidator Grant No.~681207. The authors would like to express their gratitude towards Kate Juschenko, Nicolas Monod, Jan Pachl, Vladimir Pestov, Andy Zucker, as well as the anonymous referee for a number of insightful comments on earlier versions of this paper. A remark by Andy Zucker has led to the generalization of an earlier version of Proposition~\ref{proposition:poisson.algebra}. Moreover, the authors are deeply indebted to Jan Pachl for pointing out an error in a previous version of Lemma~\ref{lemma:boundary.triviality}.


\enlargethispage{0.4cm}

\end{document}